\documentclass[11pt,a4paper,reqno,undertilde]{article}
\usepackage{amscd,amsfonts,amssymb,amsthm,amsmath, comment}
 \usepackage{bm} %%
\usepackage{setspace} %% \usepackage{lscape}
\usepackage{tikz}
\usepackage{amsrefs}
\usetikzlibrary{cd, arrows} 
\usepackage[latin1]{inputenc}

\def\Ga{{\Gamma}}
\def\vp{\varphi}
\def\bbn{\mathbb{N}}

\theoremstyle{plain}
  \newtheorem{prop}{Proposition}[section]
  \newtheorem{thm}[prop]{Theorem}
  \newtheorem{cor}[prop]{Corollary}

  \newtheorem{rem}[prop]{Remark}
    \newtheorem{lem}[prop]{Lemma}
  \newtheorem*{lem*}{Lemma}
  \newtheorem*{claim*}{Claim}
\newtheorem{defn}[prop]{Definition}
  \newtheorem*{defn*}{Definition}
  
  \newtheorem*{rem*}{Remarks}
\def\bbr{\mathbb{R}}
\def\bbc{\mathbb{C}}
\def\g{\mathfrak{g}}

\newcommand{\GL}{\operatorname{GL}}
\newcommand{\SL}{\operatorname{SL}}
\newcommand{\SO}{\operatorname{SO}}
\newcommand{\SU}{\operatorname{SU}}
\newcommand{\Sp}{\operatorname{Sp}}
\newcommand{\prof}{\operatorname{PR}}
\newcommand{\id}{\operatorname{id}}
\newcommand{\Hom}{\operatorname{Hom}}
\newcommand{\Ext}{\operatorname{Ext}}
\newcommand{\tg}{\operatorname{tg}}
\newcommand{\gr}{\operatorname{gr}}
\newcommand{\const}{\operatorname{const}}
\newcommand{\rank}{\operatorname{rk}}

\vfuzz2pt % THEOREM Environments
%%%%%%%%%%%%

\newcommand{\bbN}{{\mathbb N}}
\newcommand{\bbQ}{{\mathbb Q}}
\newcommand{\bbR}{{\mathbb R}}

\newcommand{\bbZ}{{\mathbb Z}}
\newcommand{\bbC}{{\mathbb C}}
\newcommand{\bbH}{\mathbb{H}}

\newcommand{\bfG}{{\mathbf G}}

\newcommand{\calD}{\mathcal{D}}

\newcommand{\calO}{\mathcal{O}}

%\onehalfspacing
\begin{document}

%\baselineskip=.8
\setcounter{section}{0}

\title{Stability and Instability\\ of Lattices in Semisimple Groups}
\author{Uri Bader, Alexander Lubotzky, Roman Sauer and Shmuel Weinberger}
%%\date{}
\date{}

\maketitle
%\thispagestyle{empty}

%\begin{center}\emph{Dedicated to Peter Sarnak with admiration and affection\\
%on the occasion of his 70th birthday}
%\end{center}

\medskip

\begin{abstract}
Using cohomological methods, we show that lattices in semisimple groups are typically stable with respect to the Frobenius norm but not with respect to the operator norm. 
\end{abstract}

\section{Introduction} %1

Let $\Ga$ be a group and $\g = (G_n, d_n)^\infty_{n=1} $ a family of groups $G_n$ equipped with bi-invariant metrics $d_n$.  An \emph{ asymptotic homomorphism} $\vp = \{ \vp_n\}^\infty_{n=1}$ from $\Ga $ to $\g$  is a family of maps $\vp_n:\Ga \to G_n$ such that for every $x,y \in \Ga$,
\[\lim\limits_{n\to\infty} \; d_n(\vp_n(x) \vp_n(y), \; \vp_n(xy)) = 0.\]
The group $\Gamma$ is $\g$-\emph{approximated} if there is a separating asymptotic homomorphism to $\g$, i.e. there is $\vp = \{ \vp_n\}^\infty_{n=1}$ such that $\varlimsup d_n(\varphi_n(x),1_{G_n})>0$ for every $1\neq x\in\Gamma$.
The group $\Ga $ is said to be $\g${\emph{-stable}} if for every asymptotic homomorphism $\vp = \{ \vp_n \}^\infty_{n=1}$ there exists a ``true" homomorphism $\psi = \{ \psi_n\}^\infty_{n=1}$, s.t. for every $n \in \bbn$, $\psi_n:\Ga \to G_n$ is an homomorphism  and for every $x \in \Ga, \lim\limits_{n\to\infty} d_n(\psi_n(x), \vp_n(x)) = 0$.  Namely, every asymptotic homomorphism is a small deformation of true homomorphism.
It is a standard fact that in case $\Gamma=\langle S\mid R\rangle$ is a finitely presented group defined by a finite set of generators $S$ and a finite set of relations $R$, $\Gamma$ is $\g$-stable iff for every $\epsilon>0$ there exists $\delta>0$ such that for every $n$ and for every map $\rho:S\to G_n$, if for every word $w=s_1\cdots s_m$ in $R$, $d_n(\rho(s_1)\cdots \rho(s_m),1_{G_n})<\delta$ then there exists a group homomorphism $\bar{\rho}:\Gamma\to G_n$ such that for every $s$ $S$, $d_n(\rho(s),\bar{\rho}(s))<\epsilon$, c.f 
\cite[Definition 3.2]{Arzhantseva}.

The question of stability has been an intensive direction of research in recent years, (see \cite{frob} and the references therein).
One of the motivations to study stability is that it provides a path to prove $\frak{g}$-{\bf in}approximability which has been (and still is) a major open problem for various $\frak{g}$'s, but was solved for some $\frak{g}$'s using stability, see \cite{frob} and \cite{LO}.

The current paper concerns the case where $\Ga$ is an irreducible lattice in a semisimple Lie group. By a semisimple group $G$ we mean a product of non-compact (real) simple groups $G = \mathop{\prod}\limits^l_{i = 1} G_i$. Denote $r_i = \rank_\bbr (G_i)$ and $\rank(G) = r = \sum\limits^l_{i = 1} r_i$.  We say that $G$ is \emph{higher rank} if $r \ge 2$.
Irreducible lattices in higher-rank semisimple groups form an interesting class of groups enjoying the property of super-rigidity, for example, and they are all arithmetic.

In this paper we will concern ourselves with the family of unitary groups $\g = \{ U_n(\bbc)\}^\infty_{n=1}$ with two different metrics, both derived from two different norms on $M_n(\bbc)$

\begin{description}

  \item[(F)] The Frobenius norm (the $L^2$-norm), i.e., for $A \in M_n (\bbc), \; \| A \|_F = tr (A^*A)$ and $d^F_n (g, h) = \| g - h\|_F$ for $g, h \in U_n (\bbc)$

  \item[(O)] \ The operator norm (the $L^\infty$-norm) i.e. for $A \in M_n (\bbc), \; \| A \|_{op} = \\ \mathop{\sup}\limits_{\substack{v \in\bbc^n\\ \| v \| = 1}}$
    $ \|Av\|$ and $d^{op}_n (g, h) = \| g - h\|_{op}$.

\end{description}

Our results say that higher-rank lattices behave very differently with respect to these two metrics.  We show that ``almost all" (and conjecturally all) such lattices are Frobenius stable but at the same time are operator instable.

To this end, let us recall the cohomological criterions for Frobenius stability and operator instability given in \cite{frob} and \cite{Dadarlat}, respectively.

\begin{thm}\label{thm: main old result} Let $\Ga$ be a finitely generated group.
  \begin{description}
    \item[(F)] \cite{frob} If $H^2(\Ga, V) = 0$ for every unitary representation of $\Ga$ on any Hilbert space $V$, then $\Ga$ is Frobenius stable
    \item[(O)] \cite{Dadarlat} Assume $\Ga$ is a linear group and $H^i(\Ga,\bbr) \neq 0$ for some {\bf even} $i > 0$.  Then $\Ga$ is {\bf not} operator stable.
  \end{description}
\end{thm}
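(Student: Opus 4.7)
Since Theorem~\ref{thm: main old result} combines two independent results from separate sources, I sketch the plan for each half separately.

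For (F), the natural framework is ultralimits. Given an asymptotic homomorphism $\vp = \{\vp_n\}$ into $(U_n(\bbc),d_n^F)$, record the defect $c_n(x,y) := \vp_n(x)\vp_n(y)\vp_n(xy)^{-1}$, which tends to $1$ in Frobenius norm by hypothesis. Fix a non-principal ultrafilter $\omega$ on $\bbn$. Because the Frobenius norm comes from a Hilbert--Schmidt inner product, the tracial ultraproduct $\calV := \prod_\omega (M_n(\bbc),\|\cdot\|_F)/\{\text{null sequences}\}$ is a Hilbert space on which $\Ga$ acts unitarily via the ultralimit of conjugation by $\vp_n$. An appropriately rescaled linearization of $\log c_n$ produces an honest $2$-cocycle representing a class $[\bar\alpha] \in H^2(\Ga,\calV)$, which vanishes by hypothesis, so $\bar\alpha = d\beta$. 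Representing $\beta$ by a bounded sequence $\beta_n : \Ga \to M_n(\bbc)$, the corrected maps $\psi_n(x) := \vp_n(x)\exp(\beta_n(x))$ are nearly multiplicative with quadratically smaller Frobenius error. Iterating (a Newton scheme in Hilbert norm) produces true homomorphisms $\psi_n$ with $d_n^F(\psi_n(x),\vp_n(x)) \to 0$. The main obstacle is the quantitative bookkeeping that makes the Newton iteration converge; this is precisely where the Hilbertian nature of the Frobenius norm is indispensable (and the reason the operator-norm analogue cannot be attacked this way).

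For (O), the approach is index-theoretic rather than analytic. An operator-norm asymptotic homomorphism $\vp_n : \Ga \to U_{k_n}(\bbc)$ extends to a Connes--Higson asymptotic morphism from the maximal group $C^*$-algebra $C^*(\Ga)$ into $\prod_n M_{k_n}(\bbc)$, and hence induces a well-defined map on $K_0$. Via the Chern character, the resulting K-theory class pairs with even cohomology of $\Ga$ to give a numerical invariant. To exploit a given nonzero class $c \in H^{2i}(\Ga,\bbr)$, one constructs --- using linearity of $\Ga$ to reduce to a finite skeleton of $B\Ga$ --- a sequence of finite-dimensional bundles on $B\Ga$ with ``almost flat'' connections whose Chern character detects $c$. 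Their holonomies yield an asymptotic representation of $\Ga$ whose K-theoretic pairing with $c$ is nonzero. By contrast, any true unitary representation $\rho : \Ga \to U_k(\bbc)$ factors through a compact group and its K-theory class pairs trivially with positive-degree cohomology of $\Ga$. Hence the constructed asymptotic representation cannot be approximated by true homomorphisms in operator norm.

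The main obstacle in (O) is the geometric construction of the asymptotic representation realizing a given even cohomology class, which is Dadarlat's principal technical input; the obstruction argument (comparing Chern characters of true versus asymptotic representations) is then essentially formal. In (F) the main obstacle is rather analytic, namely the quantitative passage from a trivialization of the limit cocycle to a genuine correction at finite stages.
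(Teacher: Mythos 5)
The paper does not supply a proof of Theorem~\ref{thm: main old result}; it imports part~(F) from~\cite{frob} and part~(O) from~\cite{Dadarlat} as black boxes. Your sketch is a correct high-level account of what those two references actually do. For~(F), the defect-cocycle-in-an-ultraproduct-Hilbert-space picture, followed by trivialization of the class and a quadratic correction step, is indeed the mechanism in~\cite{frob}, and you are right that the Hilbert-space structure of the tracial ultraproduct of $(M_n(\bbC),\|\cdot\|_F)$ is where the Frobenius norm is essential (the operator-norm ultraproduct is not a Hilbert module and the lifting step breaks down). For~(O), the Connes--Higson asymptotic-morphism/$K_0$/Chern-character machinery, the construction of almost-flat bundles over a skeleton of $B\Gamma$ detecting a prescribed even class (using linearity, hence residual finiteness, of $\Gamma$), and the contrast with genuine finite-dimensional representations --- whose associated flat bundles have rationally trivial Chern character in positive degree --- is exactly Dadarlat's argument. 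One phrasing you might tighten: a true representation $\rho:\Gamma\to U_k(\bbC)$ gives a flat bundle with compact structure group, hence torsion characteristic classes; ``factors through a compact group'' is a slightly loose way of saying this, but the conclusion you draw from it is the correct one.
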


Thm 1.1(F) \cite{frob} gave several examples of Frobenius stable groups, namely lattices in simple $p$-adic Lie groups of rank $\geq 3$.
This relies on the work of Garland \cite{Garland} and his followers (\cite{frob} and the reference therein), who proved cohomological vanishing below the rank for such $p$-adic groups, that is property $[T_{\rank-1}]$ as defined below. However, it left open the case of real simple groups. This is particularly frustrating, as Garland's method was designed as a $p$-adic analogue of the Matsushima formula that was first proven in the context of real Lie groups. 
In fact, the situation for real Lie groups is not as neat and clean as the in the p-adic case and the cohomology below the rank often  be non-zero. 

The main goal is to settle the annoying discrepancy between the p-adic and real worlds.
What enables us doing so is the recent work of the first and the third author~\cite{Bader-Sauer} which shows a vanishing result below the rank.
However, in the real case the vanishing does not hold for the trivial representation, that is the groups satisfy property $(T_{\rank-1})$, but in general not $[T_{\rank-1}]$, as defined below.

\begin{defn}
  A group $\Gamma$ has property $[T_n]$ if its cohomology $H^i(\Gamma, V)$ vanishes for every unitary $\Gamma$-representation $V$ and every $1\le i\le n$. It has property $(T_n)$ if this holds provided $V$ has no non-zero $\Gamma$-invariant vectors.
\end{defn}

It is a remarkable but easy fact that for $n=1$, property $(T_1)$ is equivalent to property $[T_1]$, and both properties are equivalent to Kazhdan's property $(T)$ for countable groups~(see~\cite{Bader-Sauer}*{Lemma~3.2}).
However, for every $n>1$ there exist examples of groups satisfying property $(T_n)$ but not property $[T_n]$ (e.g $\text{Sp}_{2n+2}(\mathbb{Z})$).
Theorem~\ref{thm: main old result}, in particular, says that a $[T_2]$-group is Frobenius stable. The lattices in Theorem~\ref{thm: main stability result} below are only $(T_2)$-groups, not necessarily $[T_2]$-groups. In spite of that, they are Frobenius stable.

As mentioned above, the starting point of this paper is the recent paper~\cite{Bader-Sauer} which shows that an irreducible lattice $\Gamma$ in a higher rank semisimple Lie group $G$ has property $(T_{n-1})$ where $n$ is the minimal rank of each non-compact factor of~$G$~\cite{Bader-Sauer}*{Theorem~E}. Theorem~\ref{thm: main old result} strongly suggests that such $\Gamma$ is Frobenius stable if the rank of each non-compact factor is at least~$3$ but falls short from proving it as there might be a non-trivial second cohomology for the trivial representation. For example, the group $G=\Sp(2g,\bbR)$ has a universal central extension $\tilde G$ with an infinite abelian kernel which implies that $H_c^2(G,\bbR)$ and $H^2(\Gamma,\bbR)$ are non-zero. So $\Gamma$ does not have property $[T_2]$ despite having property $(T_2)$.

To handle this difficulty we argue in two steps. First we show that for $\Gamma$ which has property $(T_2)$, its preimage $\tilde\Gamma$ in $\tilde G$ is a $[T_2]$-groups. To this end, we study the exterior square $\Gamma\wedge\Gamma$ of $\Gamma$.  We show that $\Gamma\wedge\Gamma$ is a $[T_2]$-group provided $\Gamma$ is a $(T_2)$-group (Theorem~\ref{thm: univext}). If $\Gamma$ is perfect then $\Gamma\wedge\Gamma$ is the universal central extension of~$\Gamma$. Then we use another result of \cite{Bader-Sauer} to deduce that $\Gamma\wedge\Gamma$ and $\tilde\Gamma$ are commensurable and hence $\tilde\Gamma$ is also a $[T_2]$-group. Theorem~\ref{thm: main old result} now shows that $\tilde\Gamma$ is Frobenius stable. To deduce that $\Gamma$ is also Frobenius stable we use a generalization of a result of Deligne~\cite{De}. See Theorem~\ref{thm: deligne} which shows that $\tilde\Gamma$ modulo its profinite radical is commensurable with $\Gamma$ provided $\Gamma$ has the congruence subgroup property. We then conclude the following theorem.

\begin{thm}\label{thm: main stability result}
  Let $G$ be a real semisimple Lie group and $\Gamma<G$ be an  irreducible lattice. We assume that each non-compact factor of $G$ has rank at least~$3$. Then $\Gamma$ is Frobenius stable with the possible exception that one of the non-compact factors is of Hermitian type and $\Gamma$ does not have the congruence subgroup property.
\end{thm}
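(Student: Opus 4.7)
The plan is to execute the three-step roadmap sketched in the introduction, which bridges the gap between the Bader--Sauer vanishing theorem and the Frobenius stability criterion of Theorem~\ref{thm: main old result}(F). First, since each non-compact factor of $G$ has rank at least $3$, the Bader--Sauer result gives that $\Gamma$ has property $(T_2)$, i.e.\ $H^i(\Gamma,V)=0$ for $1\le i\le 2$ and every unitary $\Gamma$-representation $V$ without non-zero invariants. This alone is not enough to apply Theorem~\ref{thm: main old result}(F), because there can still be non-vanishing cohomology with trivial coefficients (as the $\Sp(2g,\bbR)$ example shows). The central idea is to upgrade $(T_2)$ to the full $[T_2]$ property on a closely related group and then transfer Frobenius stability back to $\Gamma$.

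Next, after replacing $\Gamma$ by a finite-index subgroup so that it is perfect (which is permitted thanks to Kazhdan's property $(T)$, and does not affect stability), I would form the exterior square $\Gamma\wedge\Gamma$, which then coincides with the abstract universal central extension of $\Gamma$. By Theorem~\ref{thm: univext}, this is a $[T_2]$-group. The second input from \cite{Bader-Sauer} then gives that $\Gamma\wedge\Gamma$ is commensurable with the preimage $\tilde\Gamma$ of $\Gamma$ in the topological universal central extension $\tilde G$. Since $[T_2]$ is preserved under commensurability (via Hochschild--Serre together with the vanishing of cohomology of finite groups with Hilbert-space coefficients), $\tilde\Gamma$ is itself a $[T_2]$-group. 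Theorem~\ref{thm: main old result}(F) then delivers Frobenius stability of $\tilde\Gamma$.

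Finally, to descend from $\tilde\Gamma$ to $\Gamma$, I would apply Theorem~\ref{thm: deligne}, which identifies $\tilde\Gamma$ modulo its profinite radical with a group commensurable with $\Gamma$, under the assumption that $\Gamma$ has the congruence subgroup property. A transfer argument pushing asymptotic homomorphisms of $\Gamma$ up to $\tilde\Gamma$, straightening them there by Frobenius stability, and then pushing back down modulo the profinite kernel, yields Frobenius stability of $\Gamma$. When $G$ has no Hermitian factor, the topological universal central extension already has finite kernel, so $\tilde\Gamma$ is directly commensurable with $\Gamma$ and the Deligne step is not needed. The CSP hypothesis enters only in the Hermitian case, producing precisely the exception stated in the theorem.

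The main obstacle is the upgrade from $(T_2)$ to $[T_2]$: it depends on the exterior-square analysis of Theorem~\ref{thm: univext} and on identifying the purely group-theoretic object $\Gamma\wedge\Gamma$ with a finite-index subgroup of the topological object $\tilde\Gamma$. Once these two comparisons are in hand, both the cohomological vanishing and the Deligne-style descent are fairly standard; the delicate bookkeeping lies in ensuring that the passage to a perfect finite-index subgroup, the lifting across the central extension, and the quotient by the profinite radical can all be carried out while preserving the Frobenius stability conclusion, and in handling the Hermitian case where the kernel of $\tilde G\to G$ is genuinely infinite.
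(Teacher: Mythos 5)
Your high-level roadmap is exactly the paper's: Bader--Sauer gives $(T_2)$; the exterior square upgrades this to a $[T_2]$-group; a second input from Bader--Sauer (via Proposition~\ref{prop:tilde}) yields commensurability with $\tilde\Gamma$; Theorem~\ref{thm: main old result}(F) gives stability of $\tilde\Gamma$; and Deligne's Theorem~\ref{thm: deligne} together with Lemmas~\ref{lem:profker}--\ref{lem:finind} (Corollary~\ref{cor:N}) descends stability to $\Gamma$, with the CSP hypothesis entering only when the kernel of $\tilde G\to G$ is infinite.

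However, there is one genuine gap in your version that the paper is careful to avoid: the step where you ``replace $\Gamma$ by a finite-index subgroup so that it is perfect,'' claimed to ``not affect stability.'' This assertion requires \emph{two} nontrivial facts you have not supplied. First, that an irreducible higher-rank lattice has a perfect finite-index subgroup at all; property $(T)$ only gives finite abelianization, not termination of the derived series, so some argument (e.g.~via Margulis normal subgroup theorem plus arithmeticity) is needed. Second, and more seriously, that Frobenius stability of a finite-index subgroup implies Frobenius stability of the overgroup. No such transfer appears in the paper; the only transfer lemmas proved there (Lemmas~\ref{lem:profker} and~\ref{lem:finind}) move stability \emph{down a quotient map} whose kernel lies in the profinite radical or is finite, not \emph{up} a finite-index inclusion. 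Relying on an unstated index-up transfer is a real hole. The paper sidesteps the entire issue: Theorem~\ref{thm: univext} shows $\Gamma\wedge\Gamma$ is a $[T_2]$-group for \emph{any} $\mathrm{FP}_2$ group with $(T_2)$, with no perfectness assumption --- $\Gamma\wedge\Gamma$ is then a central extension of $[\Gamma,\Gamma]$ rather than the universal central extension of $\Gamma$, and Proposition~\ref{prop:tilde} compares it directly with $[\tilde\Gamma,\tilde\Gamma]\le\tilde\Gamma$, where $\tilde\Gamma$ is the preimage of the \emph{original} $\Gamma$. In this way the final descent from $\tilde\Gamma$ to $\Gamma$ is always a quotient map, and no upward transfer along a finite-index inclusion is ever required. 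If you rewrite your proof without the reduction to a perfect subgroup --- using Theorem~\ref{thm: univext} as actually stated --- the gap disappears and your argument coincides with the paper's.
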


According to a well known conjecture of Serre~\cite{ser72} all the lattices in Theorem~\ref{thm: main stability result} satisfy the congruence subgroup property. This has been proved in most cases, in particular $G=\Sp(2g,\bbR)$, see \cite[Theorem 3]{Rapinchuk}. So we believe that there are no exceptions in the theorem. Moreover, we believe that all irreducible lattices in higher rank semisimple Lie groups are Frobenius stable.

The theorem above and the conjectural generalization are especially interesting in view of~\cite{BL} which shows, in particular, that these lattices are never Hilbert-Schmidt stable. Recall that the Hilbert-Schmidt norm is nothing more than the renormalization of Frobenius norm, i.e. $\|A\|_\mathrm{HS}=n^{-1/2}\|A\|_\mathrm{F}$ for $A\in M_n(\bbC)$.

Before passing to the operator norm case, let us mention an interesting corollary of Theorem~\ref{thm: main stability result}. In \cite{frob} some groups which cannot be Frobenius approximated were presented for the first time. These are certain central extensions of lattices in $p$-adic Lie groups which are somewhat difficult to describe, and the proof appealed to Garland's theory. Here are some examples which are easier to describe.

\begin{cor}\label{cor: non approximation result}
  Let $G=\Sp(2g,\bbR)$, $g\ge 3$, and let $\tilde G$ its universal central extension. For every lattice $\Gamma<G$, in particular for $\Gamma=\Sp(2g,\bbZ)$, the preimage $\tilde \Gamma$ of $\Gamma$ in $\tilde G$ is not Frobenius approximated.
\end{cor}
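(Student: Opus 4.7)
The plan is to combine Frobenius stability of $\tilde\Gamma$---a by-product of the proof of Theorem~\ref{thm: main stability result}---with the fact that every finite-dimensional unitary representation of $\tilde\Gamma$ has finite image, so that the element of $\tilde\Gamma$ produced by Theorem~\ref{thm: deligne} yields a contradiction with Frobenius approximation. Since $g\ge 3$, $\Gamma$ has property $(T_2)$ by~\cite{Bader-Sauer}, so by Theorem~\ref{thm: univext} the exterior square $\Gamma\wedge\Gamma$ is a $[T_2]$-group, and since $\tilde\Gamma$ is commensurable with $\Gamma\wedge\Gamma$, it too is a $[T_2]$-group; Theorem~\ref{thm: main old result}(F) then provides Frobenius stability of $\tilde\Gamma$. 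Suppose for contradiction that $\tilde\Gamma$ is Frobenius approximated. Picking a separating asymptotic homomorphism and applying Frobenius stability yields a sequence of genuine unitary representations $\psi_n\colon\tilde\Gamma\to U_{k_n}(\bbC)$ with the property that for every $1\ne x\in\tilde\Gamma$, $\psi_n(x)\ne 1$ for infinitely many~$n$.

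The key step is to show that every finite-dimensional unitary representation $\rho\colon\tilde\Gamma\to U_n(\bbC)$ has finite image. Let $Z_0=\ker(\tilde G\to G)\cong\bbZ$, central in $\tilde\Gamma$, and decompose $\bbC^n$ into $\rho(Z_0)$-isotypic components, each $\tilde\Gamma$-invariant; on a component where $\rho|_{Z_0}$ acts by a scalar character $\chi$, the representation descends to a projective representation $\bar\rho\colon\Gamma\to\operatorname{PU}_n(\bbC)$. Margulis superrigidity extends $\bar\rho$ (after passing to a finite-index subgroup of $\Gamma$ to secure Zariski density) to a continuous homomorphism $\bar R\colon G\to\operatorname{PU}_n(\bbC)$, which lifts via the simply connected $\tilde G$ to a continuous linear representation $\tilde R\colon\tilde G\to U_n(\bbC)$. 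The complexified Lie algebra representation integrates to a representation of the simply connected complex group $\Sp(2g,\bbC)$, so $\tilde R$ factors as $\tilde G\to G\to\Sp(2g,\bbC)\to U_n(\bbC)$; its image is thus a compact subgroup of the noncompact simple group $G$, forcing it to be trivial. Consequently $\rho$ acts on this component as a central character $\chi'\colon\tilde\Gamma\to U(1)$ times the identity, and $[T_2]$ for $\tilde\Gamma$ gives $H^1(\tilde\Gamma,\bbC)=0$, so $\tilde\Gamma^{\mathrm{ab}}$ is finite and $\chi'$ has finite image; therefore $\rho$ has finite image.

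Finally, by Theorem~\ref{thm: deligne}---valid because all lattices in $\Sp(2g,\bbR)$ with $g\ge 3$ satisfy the congruence subgroup property (\cite[Theorem~3]{Rapinchuk})---the image of $Z_0$ in the profinite completion of $\tilde\Gamma$ is finite, so there exists $N\ge 1$ such that $z:=Nz_0\ne 1$ (for a generator $z_0$) lies in every finite-index normal subgroup of $\tilde\Gamma$, and hence in $\ker\psi_n$ for every $n$. This contradicts the separating property at $z$. The main technical hurdle is the rigidity step of the middle paragraph, which combines the isotypic decomposition, Margulis' extension theorem (with its finite-index subgroup and Zariski density subtleties), and the integration/lifting through $\tilde G$ exploiting the well-known non-linearity of every nontrivial finite cover of $\Sp(2g,\bbR)$; once this is in hand, the contradiction with approximation is immediate.
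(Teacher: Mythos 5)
Your first half---Frobenius stability of $\tilde\Gamma$ via Theorem~\ref{thm: univext}, commensurability of $\tilde\Gamma$ with $\Gamma\wedge\Gamma$, and $[T_2]$---is exactly the paper's route, which packages these steps as Theorem~\ref{thm:tildeT2}. Your use of Deligne's theorem to extract a nontrivial central element $z$ lying in every finite-index subgroup of $\tilde\Gamma$, and the invocation of CSP via Rapinchuk, also matches the paper.

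Where you part ways with the paper is the ``key step'' claiming every finite-dimensional unitary representation of $\tilde\Gamma$ has finite image, proved via isotypic decomposition, Margulis superrigidity for the induced projective representation, integration to $\Sp(2g,\bbC)$, and non-linearity of $\tilde G$. This is a large detour, and it is not needed. The paper's argument at this point is a one-liner: each $\psi_n(\tilde\Gamma)$ is a finitely generated linear group, hence residually finite by Malcev's theorem, so $\ker\psi_n$ is an intersection of finite-index subgroups of $\tilde\Gamma$ and therefore contains $\prof(\tilde\Gamma)\ni z$ (this is exactly the observation the paper isolates as Lemma~\ref{lem:profker}: every homomorphism from a finitely generated group to a linear group factors through the quotient by the profinite radical). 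Thus $\psi_n(z)=1$ for all $n$, contradicting separation. You do not need the image to be finite---merely residually finite---and residual finiteness of finitely generated linear groups is far more elementary than the superrigidity argument you build. Your superrigidity step is, I believe, ultimately correct, but it requires care (the target of the projective representation is compact, so the applicable version of Margulis superrigidity is the bounded-image case coming from arithmeticity rather than the extension theorem for semisimple targets, and the ``finite image'' conclusion for higher-rank lattices rests on a further arithmeticity argument you gesture at). I would replace the entire middle paragraph with Malcev; the proof then exactly recovers the paper's, which intentionally highlights that Deligne's theorem is doing all the work through the failure of residual finiteness of $\tilde\Gamma$.
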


It is an interesting fact that Deligne's work~\cite{De} is used in two different ways to prove the above theorem and its corollary. First we use the characterization of the profinite radical to conclude the stability of $\Gamma$ from the one of $\tilde\Gamma$. Second we use the fact that there is a profinite radical, so $\tilde\Gamma$ is not residually finite, to conclude that $\tilde\Gamma$ cannot be Frobenius approximated.

The result described so far will be described in full and proved in \S2. In \S3 we will switch our attention to the case of operator norm.
Here one gets a fairly complete result for uniform lattices.

\begin{thm} \label{thm:mainO}
    Let $G$ be a semisimple real Lie group and $\Gamma<G$ a {\bf cocompact} lattice. 
    Then $\Gamma$ is a {\bf not} operator stable, with the only possible exceptions being when $G$ is locally isomorphic to either $\SO(n,1)$ for $n$ odd or $\SL_3(\mathbb{R})$.
\end{thm}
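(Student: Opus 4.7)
By Theorem~\ref{thm: main old result}(O), applicable because a cocompact lattice in a real semisimple Lie group is linear, it suffices to exhibit a positive even integer $i$ with $H^i(\Gamma;\bbR)\neq 0$. Since $\Gamma$ is cocompact, the locally symmetric space $\Gamma\backslash G/K$ (with $K<G$ a maximal compact subgroup) is a closed manifold, and Matsushima's formula, applied to the trivial isotypic component of $L^2(\Gamma\backslash G)$, furnishes a canonical injection
\[
H^*(X_u;\bbR)\hookrightarrow H^*(\Gamma;\bbR),
\]
where $X_u=U/K$ is the compact dual symmetric space of $G/K$. The task reduces to finding a positive even degree in which $X_u$ has nontrivial real cohomology.

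A first reduction shows $G$ may be assumed simple. Writing $G=\prod_j G_j$ as a product of simple factors one has $X_u=\prod_j X_{u,j}$, and by K\"unneth $H^*(X_u;\bbR)=\bigotimes_j H^*(X_{u,j};\bbR)$. Each $X_{u,j}$ is a nontrivial closed orientable manifold of dimension $d_j\ge 1$ with one-dimensional top cohomology. If some $d_j$ is even, the top class of $X_{u,j}$ already produces a positive even class in $H^*(X_u;\bbR)$; otherwise all $d_j$ are odd and, when there are at least two simple factors, the cross product of the top classes of any two of them lies in $H^{d_i+d_j}(X_u;\bbR)$, a positive even degree.

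Assume now $G$ is simple. I distinguish three cases for the irreducible compact symmetric space $X_u=U/K$. \textbf{(i)} If $G/K$ is Hermitian, $X_u$ is a compact K\"ahler manifold and its K\"ahler class is a nonzero element of $H^2(X_u;\bbR)$. \textbf{(ii)} If $\rank U=\rank K$ (the equal-rank case, subsuming the Hermitian one), it is classical that $H^*(X_u;\bbR)$ is concentrated in even degrees with $\chi(X_u)>0$, so a positive even class exists. \textbf{(iii)} In the remaining non-equal-rank cases, a direct case-by-case inspection of Cartan's classification shows that $X_u$ carries a class in some positive even degree, with the only exceptions being $X_u=S^n$ for $n$ odd (corresponding to $G$ locally isomorphic to $\SO(n,1)$) and $X_u=\SU(3)/\SO(3)$, the Wu manifold (corresponding to $G$ locally isomorphic to $\SL_3(\bbR)$); these are the only irreducible compact symmetric spaces on Cartan's list whose real cohomology is that of an odd-dimensional sphere.

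The main obstacle is the verification in case (iii): one must pass through each non-equal-rank simple real Lie group in Cartan's list and confirm that the compact dual $X_u$ admits a nonzero class in some positive even degree, with the two displayed cases being the only exceptions. Together with Matsushima's injection and Dadarlat's cohomological criterion, this yields the theorem.
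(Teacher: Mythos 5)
Your proposal is correct and reaches the same conclusion, but the route through the final classification step differs from the paper's. Both arguments begin identically: Dadarlat's criterion (Theorem~\ref{thm: main old result}(O)) reduces the problem to exhibiting a nonzero class in $H^i(\Gamma;\bbR)$ for some even $i>0$, Matsushima's formula injects $H^*_c(G;\bbR)\cong H^*_s(X_u;\bbR)$ into $H^*(\Gamma;\bbR)$ for cocompact $\Gamma$, and one must then show the compact dual $X_u$ has nonzero rational cohomology in some positive even degree. The paper phrases this as ``$X_u$ is not an odd-dimensional rational homology sphere'' (Remark~\ref{rem:homsphere}, by Poincar\'e duality) and then invokes two classification results as a black box: Wolf's classification~\cite{Wolf} of compact symmetric spaces that are rational homology spheres (giving odd spheres and $\SU(3)/\SO(3)$ as the only odd-dimensional ones) and Borel's classification~\cite{Borel} of odd spheres as homogeneous spaces (pinning down $\SO(n+1)/\SO(n)$, hence $\SO(n,1)$). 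You instead reduce to simple $G$ via K\"unneth, dispose of the equal-rank (and in particular Hermitian) case by the positivity of the Euler characteristic, and then propose to scan Cartan's list of non-equal-rank irreducible symmetric spaces directly. This is a legitimate alternative, and your equal-rank observation does handle a large chunk of the list at a stroke; but the remaining case-by-case verification you defer to in step (iii) is precisely the content of Wolf's theorem, so you have not so much avoided it as relocated it. The paper's route is preferable as a written argument because it replaces the unfinished inspection with clean citations; your route would be preferable if one wanted a self-contained proof avoiding Wolf and Borel, but then you would actually have to carry out the inspection of the non-equal-rank irreducible duals rather than assert it.
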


Theorem~\ref{thm:mainO} will follow from Theorem~\ref{thm: main old result} by showing that the uniform lattices in question carry some non-trivial even degree cohomology.
We now sketch the proof of the above statement.
We note that Matsushima's formula~\cite{Matsushima} (or the later Shapiro Lemma) yields an injection $H^*_c(G,\mathbb{R})\hookrightarrow H^*(\Gamma,\mathbb{R})$,
where $H^*_c$ denotes the continuous group cohomology and $\Gamma<G$ is uniform.
Thus it suffices to show that $G$ carries some non-trivial even degree continuous group cohomology.
We will use the fundamental fact that the continuous group cohomology of a semisimple Lie group is isomorphic to the \emph{singular} cohomology of the corresponding dual symmetric space, which is a compact manifold~\cite{Borel-Wallach}*{Theorem~IX.5.6}. Observe that a compact manifold carries a non-trivial even degree cohomology iff it is not an odd dimensional rational homology sphere. 
Indeed, if the dimension is even then the top class is such a non-trivial cohomology class, while if the dimension is odd, the non-triviality of any odd degree cohomology implies the non-triviality of its even complimentary degree cohomology.
We are thus left to show that under the conditions of Theorem~\ref{thm:mainO}, the dual symmetric space of $G$ is not an odd dimensional rational homology sphere. 
Fortunately, the compact symmetric spaces which are rational homology spheres were completely classified by Wolf~\cite[Theorem 1]{Wolf}. In particular, the only odd dimensional rational homology spheres are actual odd dimensional spheres (or the corresponding projective spaces), with a unique exception that corresponds to the dual symmetric space of $\SL_3(\mathbb{R})$. 
The final step in the proof of Theorem~\ref{thm:mainO} follows by applying a classification by Borel of all the presentations of odd dimensional spheres as homogeneous spaces, which in particular shows that these form Riemannian symmetric spaces only in the cases $\SO(n+1)/\SO(n)$ for odd $n$~\cite{Borel},
and noting that the latter are the dual symmetric spaces of $\SO(n,1)$ for odd $n$.

In \S3.1 we will consider lattices in those semisimple Lie groups not covered by Theorem~\ref{thm:mainO}, namely the groups $\SL_3(\mathbb{R})$ and $\SO(n,1)$ for $n$ odd. 
We will show in Corollary~\ref{cor:SL3} that lattices in $\SL_3(\mathbb{R})$ have finite index subgroups which are not operator stable and the situation with lattices in $\SO(n,1)$ will be summarized in Corollary~\ref{cor:SOn1}.

Finally, in \S4 we will suggest and call the attention to various open problems and further research directions.

\subsection{Acknowledgements}

The authors wish to thank the referees who helped improving the paper in many places.

U.B was partially supported by ISF Moked grant number 2919/19.
A.L acknowledges with gratitude the hospitality and support of the Fields Institute
(Toronto)  where part of this work was carried on, as well as a grant by the European Research Council (ERC)
under the European Union's Horizon 2020 (grant agreement No 882751). 
R.S was partially supported by the projects 441426599 and 441848266 funded by the DFG (Deutsche Forschungsgemeinschaft). R.S. thanks the University of Chicago and the Weizmann Institute for the hospitality when part of this work was carried out. 
S.W was partially supported by the NSF grant 2105451.

\section{Frobenius stability}

\subsection{The transgression associated with a central extension}

We consider a group $\Gamma$, an abelian group $A$ and a central extension 
 \begin{equation} \label{eq:pi}
 1\to A\to L \overset{\rho}{\longrightarrow} \Gamma\to 1.
   \end{equation}
  Let $k$ be another abelian group. 
  For an element $\beta\in \Hom(A,k)$ we consider the graph of $-\beta$, 
  \[ \gr(-\beta)=\{(a,-\beta(a))\mid a\in A\}< A \times k, \]
  as a central subgroup of $L\times k$ and consider the associated quotient group 
  \[ L^\beta=L \times k/\gr(-\beta). \]
  Loosely speaking, $L^\beta$ is a version of $L$ in which $A$ is replaced by $k$ via $\beta$.
  The composition of $\rho$ with the projection $L\times k\to L$ induces a homomorphism $\tg_\rho(\beta)\colon L^\beta \to \Gamma$. 
  We obtain a commutative diagram whose rows are central extensions. 
  \begin{equation} \label{eq:beta}
 \begin{tikzcd}
      1 \arrow[r] & A \arrow[r] \arrow[d, "\beta"] & L \arrow[d]\arrow[r, "\rho"] & \Gamma \arrow[d]\arrow[r] & 1 \\
            1 \arrow[r] & k \arrow[r] & L^\beta \arrow[r, "\tg_\rho(\beta)"] & \Gamma \arrow[r] & 1 
\end{tikzcd}
 \end{equation}
  where the maps $L\to L^\beta$ and $k\to L^\beta$ are induced by 
  $\id\times 0:L\to L\times k$ and $0\times \id:k\to L\times k$ correspondingly.
We consider this extension as an element of $H^2(\Gamma,k)$, thus we obtain a map 

\begin{equation} \label{eq:tgpi}
\tg_\rho: H^1(A,k)\cong \Hom(A,k) \to H^2(\Gamma,k), \quad \beta\mapsto \tg_\rho(\beta). 
\end{equation}
By the explicit description given in \cite[Proposition (1.6.6)]{neukirch}, this map is the \emph{transgression} with $k$ coefficients associated with the extension \eqref{eq:pi} and 
by \cite[Proposition (1.6.7)]{neukirch}, it fits in the associated Inflation-Restriction five terms exact sequence  
  \begin{align} \label{eq:5t} 
   0 \to H^1(\Gamma,k) \to H^1(L,k) \to 
   H^1(A,k) \overset{\tg_\rho}{\longrightarrow} H^2(\Gamma,k)  \to  H^2(L,k). 
  \end{align}

The following lemma will be needed in the proof of Theorem~\ref{thm: univext}.

\begin{lem} \label{lem:tg}
Assume $k$ is a a field of characteristic 0 and the transgression $\tg_\rho$ is an isomorphism.
Then $H^1(L,k) \cong  H^1(\Gamma,k)$ and $H^2(L,k)$ embeds into  $H^1(\Gamma,k) \otimes H^1(A,k)$.
In particular, if $\Gamma$ has a finite abelianization then $H^2(L,k)=H^1(L,k)=0$.
\end{lem}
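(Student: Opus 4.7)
The plan is to extract both statements from the Lyndon--Hochschild--Serre spectral sequence
\[E_2^{p,q}=H^p(\Gamma,H^q(A,k))\Rightarrow H^{p+q}(L,k)\]
attached to the central extension~\eqref{eq:pi}. Centrality of $A$ makes the action of $\Gamma$ on every $H^q(A,k)$ trivial, and the $d_2$-differential $E_2^{0,1}\to E_2^{2,0}$ coincides with the transgression $\tg_\rho$ under the standard identification of the edge of the spectral sequence with the five-term exact sequence~\eqref{eq:5t}.

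The isomorphism $H^1(L,k)\cong H^1(\Gamma,k)$ is immediate: injectivity of $\tg_\rho$ together with exactness of~\eqref{eq:5t} at $H^1(L,k)$ forces the inflation $H^1(\Gamma,k)\to H^1(L,k)$ to be an isomorphism. For $H^2(L,k)$ the natural filtration has three associated graded pieces $E_\infty^{p,q}$ with $p+q=2$. Surjectivity of $\tg_\rho$ kills $E_2^{2,0}$ on the $E_3$-page, so $E_\infty^{2,0}=0$. The main step is to show $E_\infty^{0,2}=0$, which I would do by proving $d_2\colon E_2^{0,2}\to E_2^{2,1}$ is injective. Since $k$ is a commutative ring the LHS spectral sequence is multiplicative with $d_2$ a graded derivation; and since $A$ is abelian and $\mathrm{char}(k)=0$, the graded algebra $H^*(A,k)$ is the exterior algebra on $V:=H^1(A,k)$ (one reduces to the finitely generated case by a colimit argument). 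For $\alpha,\beta\in V=E_2^{0,1}$ the Leibniz rule then gives
\[d_2(\alpha\wedge\beta)=\tg_\rho(\alpha)\smile\beta-\tg_\rho(\beta)\smile\alpha\in E_2^{2,1}.\]
Using the isomorphism $\tg_\rho\colon V\xrightarrow{\sim}H^2(\Gamma,k)$ to identify the image with $V\otimes V\subseteq E_2^{2,1}=H^2(\Gamma,V)$, the map is exactly the antisymmetrization $\wedge^2V\to V\otimes V$, $\alpha\wedge\beta\mapsto\alpha\otimes\beta-\beta\otimes\alpha$, which is injective because $\mathrm{char}(k)\neq 2$.

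With the two outer pieces gone, $H^2(L,k)$ embeds into the surviving middle piece $E_2^{1,1}=H^1(\Gamma,H^1(A,k))$; triviality of the action and the standard identification of $H^1$ with $\Hom(\Gamma^{\mathrm{ab}},-)$ identify this with the ``$H^1(\Gamma,k)\otimes H^1(A,k)$'' of the statement. The ``in particular'' clause is now immediate: if $\Gamma^{\mathrm{ab}}$ is finite and $\mathrm{char}(k)=0$, then $\Hom(\Gamma^{\mathrm{ab}},W)=0$ for every $k$-vector space $W$, so both $H^1(L,k)=H^1(\Gamma,k)$ and the ambient space $H^1(\Gamma,H^1(A,k))$ for $H^2(L,k)$ vanish.

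The main obstacle is the $d_2$ computation on $E_2^{0,2}$: it relies on the multiplicativity of the LHS spectral sequence, on the identification of $H^*(A,k)$ with an exterior algebra (where characteristic zero is essential), and on tracking Koszul signs carefully so that the differential becomes antisymmetrization. Once that identification is in place, the injectivity of antisymmetrization in characteristic zero finishes the job.
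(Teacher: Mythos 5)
Your proof is correct and follows essentially the same route as the paper: the five-term exact sequence handles $H^1$, and the Lyndon--Hochschild--Serre spectral sequence with its multiplicative structure handles $H^2$, with the key step being that $d_2^{0,2}$ is (by the Leibniz rule and the exterior-algebra description of $H^*(A,k)$) the antisymmetrization map $\Lambda^2 H^1(A,k)\to H^1(A,k)\otimes H^1(A,k)$, which is injective in characteristic zero. This is exactly the paper's argument.
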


\begin{proof}
The fact that $H^1(\Gamma,k) \cong H^1(L,k)$ follows immediately from the exactness of \eqref{eq:5t}
using the assumption that $\tg_\rho$ is an isomorphism.
We proceed to show that $H^2(L,k)$ embeds into $H^1(\Gamma,k) \otimes H^1(A,k)$.

We consider the Hochschild-Serre-Lyndon spectral sequence associated  with the central extension~\eqref{eq:pi}. Its $E_2$-term is given by 
  \begin{equation}\label{eq: iso of algebras} E_2^{pq}=H^p(\Gamma, H^q(A,k))\cong H^p(\Gamma,k)\otimes  H^q(A, k).
  \end{equation}
  We are interested in the line $p+q=2$.
The term $E_\infty^{11}=E_3^{11}$ embeds into $E_2^{11}\cong H^1(\Gamma,k) \otimes H^1(A,k)$. 
Hence it is enough to show that $E_\infty^{02}=E_\infty^{20}=0$.
By~\cite[Proposition (2.4.3)]{neukirch}, $d_2^{01}=\tg_\rho$.
Since this is an isomorphism, in particular a surjection, we deduce that $E_\infty^{20}=E_3^{20}=0$.
We are left to show that $E_\infty^{02}=E_4^{02}=0$.
We will do so by showing that $d^{02}_2\colon E_2^{02}\to E_2^{21}$ is injective, thus $E_3^{02}=0$.
Roughly speaking, $d^{02}_2$ is injective because it could be identified with the symmetrization map of an exterior algebra. The proof below will make this statement precise.

  Note that the isomorphism \eqref{eq: iso of algebras} represents the $E_2$-term as graded algebra, namely the product of the graded algebras $H^\ast(\Gamma,k)$ and $H^\ast(A, k)$.
  Upon identifying the $H^0$-factors in the $E_2$-term with~$k$ we may view $d_2^{01}$ as a map $\tilde d_2^{01}\colon H^1(A,k)\to H^2(\Gamma,k)$. Note that $d_2^{01}(1\otimes x)=\tilde d_2^{01} (x)\otimes 1$ under this notation and recall that this is an isomorphism. Furthermore,
  we have
  \begin{equation}\label{eq: algebra abelian group} H^\ast(A,k)\cong \Lambda^\ast H^1(A,k).
  \end{equation}
  Consider the following diagram
  \[\begin{tikzcd}
      \Lambda^2 H^1(A,k)\arrow[d, "\eqref{eq: algebra abelian group}"', "\cong"]\ar[r, "\sigma"] & H^1(A,k) \otimes H^1(A,k)\ar[d, "\tilde d_2^{01}\otimes\id"', "\cong"]\\
      H^0({\Gamma},k)\otimes H^2(A,k)\arrow[d,"\eqref{eq: iso of algebras}"',"\cong"] & H^2({\Gamma},k) \otimes H^1(A,k)\arrow[d, "\eqref{eq: iso of algebras}"',"\cong"]\\
      E_2^{02}\arrow[r, "d_2^{02}"] & E_2^{21}
    \end{tikzcd}
  \]
  where $\sigma$ is the symmetrization map $x\wedge y\mapsto x\otimes y-y\otimes x$. 
  
  Fix $x\wedge y\in \Lambda^2 H^1(A,k)$. It corresponds to
  $(1\otimes x)\cdot (1\otimes y)\in H^0(\Gamma,k)\otimes H^2(A,k)$. By the derivation property,
  \begin{align*} d_2^{02}\bigl( (1\otimes x)\cdot (1\otimes y)\bigr) & = d_2^{01}(1\otimes x)\cdot (1\otimes y)-(1\otimes x)\cdot d_2^{01}(1\otimes y)                                         \\
                                                                   & = \bigl(\tilde d_2^{01}(x)\otimes 1\bigr) \cdot (1\otimes y)-(1\otimes x)\cdot \bigl( \tilde d_2^{01}(y)\otimes 1\bigr) \\
                                                                   & = \tilde d_2^{01}(x)\otimes y-\tilde d_2^{01}(y)\otimes x.
  \end{align*}
  This shows that the diagram commutes.
  As the symmetrization map $\sigma$ over a field of characteristic 0 is injective, we conclude that $d^{02}_2$ is injective. 
\end{proof}

\subsection{The exterior square}

In this subsection we review an important construction that we believe should be better known, the \emph{exterior square} of a group and the associated short exact sequence \eqref{eq:extsq}.
The exterior square was first defined and studied in \cite{Miller}
but it was named only much later, in \cite[\S2.7]{Brown-Loday}, in the context of study of non-abelian tensor products.
A thorough and clear presentation of the subject is given in the thesis \cite{Naik}.

Let $\Gamma$ be a group.
The \emph{(non-abelian) exterior square} of $\Gamma$, denoted
 $\Gamma\wedge \Gamma$, is the group generated by the symbols $x\wedge y$, for every $x,y\in \Gamma$, under the relations
\[ xx'\wedge y=(xx'x^{-1}\wedge xyx^{-1})(x\wedge y), \quad x\wedge yy'=(x\wedge y)(yxy^{-1}\wedge yy'y^{-1}) \]
and 
\[ x\wedge x=1 \]
for every $x,x',y,y'\in \Gamma$
(the first line of relations defines the tensor square, $\Gamma\otimes \Gamma$, and the second line provides a natural homomorphism $\Gamma\otimes \Gamma\to \Gamma\wedge \Gamma$). The relations are such that $x\wedge y\mapsto [x,y]$ induces a  homomorphism 
$\Gamma\wedge \Gamma\to [\Gamma,\Gamma]$. 
  We now consider an abelian group $k$ and a central extension 
   \begin{equation} \label{eq:pik}
 1\to k\to L \overset{\pi}{\longrightarrow} \Gamma\to 1.
   \end{equation}
   We get a natural homomorphism 
   \begin{equation} \label{eq:barpi}
   \bar{\pi}:\Gamma \wedge \Gamma\to [L,L], \quad x\wedge y \mapsto [s(x),s(y)],
   \end{equation}
where $s:\Gamma\to L$ is a set theoretical section for $\pi$.
Indeed, one checks easily that $[s(x),s(y)]$ does not depend on the choice of the section $s$,
thus induces a well defined map $\Gamma\times \Gamma \to [L,L]$ which extends to $\Gamma \wedge \Gamma$ using the standard commutator relations.
In particular, the identity central extension $\id:\Gamma\to\Gamma$ gives rise to a group homomorphism $\bar{\id}:\Gamma \wedge \Gamma\to [\Gamma,\Gamma]$
and we get $\bar{\id}=\pi|_{[L,L]}\circ \bar{\pi}$.
By the main result of \cite{Miller}
we have $\ker(\bar{\id})\cong H_2(\Gamma,\mathbb{Z})$, which gives us the central extension 
 \begin{equation} \label{eq:extsq}
      1\to H_2(\Gamma,\mathbb{Z})\to \Gamma\wedge \Gamma\overset{\bar{\id}}{\longrightarrow} [\Gamma, \Gamma]\to 1. 
  \end{equation}
  See~\cite[\S2.7]{Brown-Loday} and~\cite[\S3.8.4 and \S3.4.1]{Naik}. 
  We obtain an element $h(\pi)\in \Hom(H_2(\Gamma,\bbZ),k)$ fitting the following commutative diagram.
\begin{equation} \label{eq:h}
\begin{tikzcd}
      1 \arrow[r] & H_2(\Gamma,\mathbb{Z}) \arrow[r] \arrow[d, "h(\pi)"] & \Gamma\wedge \Gamma \arrow[d, "\bar{\pi}"]\arrow[r] & {[\Gamma,\Gamma]} \arrow[d]\arrow[r] & 1 \\
            1 \arrow[r] & k \arrow[r] & L \arrow[r, "\pi"] & \Gamma \arrow[r] & 1 
\end{tikzcd}
  \end{equation}
  Interpreting the extension $\pi$ as an element in $H^2(\Gamma,k)$ we obtain a map 
  \[ h:H^2(\Gamma,k)\to \Hom(H_2(\Gamma,\bbZ),k), \quad \pi \mapsto h(\pi). \]
  By~\cite[\S3.6.4]{Naik}, the map $h$ is the epimorphism appearing in the Universal Coefficient Theorem: 
\begin{equation} \label{eq:uct}
0 \to \Ext^1_\bbZ(\Gamma/[\Gamma,\Gamma],k) \to H^2(\Gamma,k) \overset{h}{\longrightarrow} \Hom(H_2(\Gamma,\bbZ),k) \to 0 
\end{equation}

Next, we specialize \eqref{eq:uct} to $k=H_2(\Gamma,\bbZ)$ and, using the fact that $h$ is surjective we find a central extension $\pi_0$ in $H^2(\Gamma,H_2(\Gamma,\bbZ))$ such that $h(\pi_0)$ is the identity in $\Hom(H_2(\Gamma,\bbZ),H_2(\Gamma,\bbZ))$.
The commutative diagram \eqref{eq:h} specializes to the following commutative diagram
\begin{equation}\label{eq:E_0}
\begin{tikzcd} 
      1 \arrow[r] & H_2(\Gamma,\mathbb{Z}) \arrow[r] \arrow[d, "h(\pi_0)=\id"] & \Gamma\wedge \Gamma \arrow[d, "\bar{\pi}_0"]\arrow[r] & {[\Gamma,\Gamma]} \arrow[d]\arrow[r] & 1 \\
            1 \arrow[r] & H_2(\Gamma,\mathbb{Z}) \arrow[r] & L_0 \arrow[r, "\pi_0"] & \Gamma \arrow[r] & 1 
\end{tikzcd}
\end{equation}
A central extension, such as $\pi_0$, fitting in~\eqref{eq:E_0} is called a \emph{Schur covering} of $\Gamma$. We note that Schur coverings form initial objects in the category of extensions of $\Gamma$ where morphism are taken to be \emph{homoclinisms}~\cite[\S3.6.5]{Naik}.

We now consider the transgression~\eqref{eq:tgpi} 
\[\tg_{\pi_0}:\Hom(H_2(\Gamma,\mathbb{Z}),k) \to H^2(\Gamma,k).\]
specialized to the extension $\pi_0$. 

\begin{lem} \label{lem:split}
  For a Schur covering extension $\pi_0$ of $\Gamma$ the corresponding transgression $\tg_{\pi_0}$ splits \eqref{eq:uct} and provides an explicit decomposition 
\[ H^2(\Gamma,k) =\tg_{\pi_0}(\Hom(H_2(\Gamma,\bbZ),k))\oplus \Ext^1_\bbZ(\Gamma/[\Gamma,\Gamma],k). \]
\end{lem}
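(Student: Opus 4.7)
The plan is to show that $\tg_{\pi_0}$ is a section of $h$, i.e.\ that $h\circ \tg_{\pi_0}=\id_{\Hom(H_2(\Gamma,\bbZ),k)}$. Once this is done, the standard splitting lemma applied to~\eqref{eq:uct} yields the internal direct sum
\[ H^2(\Gamma,k)=\tg_{\pi_0}\bigl(\Hom(H_2(\Gamma,\bbZ),k)\bigr)\oplus \ker(h), \]
and $\ker(h)=\Ext^1_\bbZ(\Gamma/[\Gamma,\Gamma],k)$ by~\eqref{eq:uct}, which is the claimed decomposition.

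To prove $h(\tg_{\pi_0}(\beta))=\beta$ for an arbitrary $\beta\in \Hom(H_2(\Gamma,\bbZ),k)$, I would unravel the two constructions on the Schur covering $\pi_0$. By the definition~\eqref{eq:beta}, the class $\tg_{\pi_0}(\beta)$ is represented by the central extension $1\to k\to L_0^\beta \to \Gamma\to 1$, where $L_0^\beta=L_0\times k/\gr(-\beta)$, and the canonical map $\iota\colon L_0\to L_0^\beta$, $l\mapsto \overline{(l,0)}$, fits into a morphism of central extensions from $\pi_0$ to $\tg_{\pi_0}(\beta)$. The induced map on kernels is precisely $\beta$, since in $L_0\times k/\gr(-\beta)$ one has $\overline{(a,0)}=\overline{(0,\beta(a))}$. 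Naturality of the commutator construction~\eqref{eq:barpi} with respect to this morphism implies that the commutator map $\Gamma\wedge\Gamma\to [L_0^\beta,L_0^\beta]$ attached to $\tg_{\pi_0}(\beta)$ factors through $\iota\circ \bar\pi_0$. Restricting to $H_2(\Gamma,\bbZ)\subset\Gamma\wedge\Gamma$ and using the defining property of the Schur covering, namely $\bar\pi_0|_{H_2(\Gamma,\bbZ)}=\id$ (the left vertical map in~\eqref{eq:E_0}), this restriction equals $\beta$. By the definition of $h$ via~\eqref{eq:h} this says exactly $h(\tg_{\pi_0}(\beta))=\beta$.

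I expect no serious obstacle here: the argument is a diagram-chase combining (i) the universal property of the Schur covering, $h(\pi_0)=\id_{H_2(\Gamma,\bbZ)}$, with (ii) the definitional identity $\overline{(a,0)}=\overline{(0,\beta(a))}$ in $L_0^\beta$, which is the very content of the operation $L_0\rightsquigarrow L_0^\beta$ of ``twisting by $\beta$''. The main bookkeeping is keeping track of the morphisms of central extensions involved, which one can organize by placing diagram~\eqref{eq:beta} applied to $\rho=\pi_0$ next to diagram~\eqref{eq:h} applied to $\pi=\tg_{\pi_0}(\beta)$, and reading off the map induced on the left-hand kernels.
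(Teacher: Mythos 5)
Your proof is correct and follows essentially the same line as the paper's: both reduce to showing $h\circ\tg_{\pi_0}=\id$ by splicing diagram~\eqref{eq:beta} (specialized to $\rho=\pi_0$) with~\eqref{eq:E_0}, checking on generators $x\wedge y$ that the middle map is the commutator map $\bar\pi'$, and reading off the left-hand map as $\beta\circ h(\pi_0)=\beta$. The only difference is cosmetic: the paper first invokes surjectivity of $h$ to write an arbitrary $\beta$ as $h(\pi)$ and then shows $h(\pi)=h(\tg_{\pi_0}h(\pi))$, whereas you compute $h(\tg_{\pi_0}(\beta))=\beta$ directly, which is slightly cleaner; one small wording nit is that $\bar\pi'$ \emph{equals} $\iota\circ\bar\pi_0$ (by choosing the section $\iota\circ s_0$) rather than merely ``factoring through'' it.
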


\begin{proof}
We need to show that $h\circ \tg_{\pi_0}=\id$. Since $h$ is surjective, we may fix an arbitrary extension $\pi$, as in \eqref{eq:pik}, representing an element of $H^2(\Gamma,k)$, and show that $h(\pi)=h\circ \tg_{\pi_0}\circ h (\pi)$. For convenience, we write $\pi'=\tg_{\pi_0}\circ h (\pi)$ and argue to show $h(\pi)=h(\pi')$.
Note that $h(\pi)$ is given in the diagram \eqref{eq:h}.
Specializing \eqref{eq:beta} to $\rho=\pi_0$ and $\beta=h(\pi)$, we obtain 
\[ 
 \begin{tikzcd}
      1 \arrow[r] & H_2(\Gamma,\mathbb{Z}) \arrow[r] \arrow[d, "h(\pi)"] & L_0 \arrow[d]\arrow[r, "\pi_0"] & \Gamma \arrow[d]\arrow[r] & 1 \\
            1 \arrow[r] & k \arrow[r] & L_0^{h(\pi)} \arrow[r, "\pi'"] & \Gamma \arrow[r] & 1 
\end{tikzcd}
\] 
which, when precomposed with \eqref{eq:E_0}, gives rise to 
\[ 
 \begin{tikzcd}
     1 \arrow[r] & H_2(\Gamma,\mathbb{Z}) \arrow[r] \arrow[d, "h(\pi)"] & \Gamma\wedge \Gamma \arrow[d, "\bar{\pi}'"]\arrow[r] & {[\Gamma,\Gamma]} \arrow[d]\arrow[r] & 1 \\
            1 \arrow[r] & k \arrow[r] & L_0^{h(\pi)} \arrow[r, "\pi'"] & \Gamma \arrow[r] & 1 
\end{tikzcd}
\] 
where we conclude the central vertical map to be $\bar{\pi}'$, as indicated, as it agrees on elements of the form $x\wedge y$ with the corresponding map defined in \eqref{eq:barpi}.
We deduce from \eqref{eq:h}, specialized to $\pi'$, that indeed $h(\pi)=h(\pi')$.
\end{proof}

Next, we consider the central extension $\bar{\id}$ given in~\eqref{eq:extsq} and the associated transgression $\tg_{\bar{\id}}\colon  \Hom(H_2(\Gamma,\bbZ),V) \to H^2([\Gamma,\Gamma],V)$.

%\begin{equation} \label{eq:baridtg}
%\tg_{\bar{\id}}: \Hom(H_2(\Gamma,\bbZ),V) \to H^2([\Gamma,\Gamma],V), 
%\end{equation}
%and the following exact sequence, specializing \eqref{eq:5t},
%  \begin{align} \label{eq:s5t}
%  0 \to & \Hom([\Gamma,\Gamma],V) \to \Hom(\Gamma\wedge \Gamma,V) \to \\
%   & \Hom(H_2(\Gamma,\bbZ),V) \overset{\tg_{\bar{\id}}}{\longrightarrow} H^2({[\Gamma,\Gamma]},V)  \to  H^2(\Gamma\wedge \Gamma,V). \nonumber
%  \end{align}

\begin{lem} \label{lem:i}
  Given a group $\Gamma$ and an abelian group $k$, we have $\tg_{\bar{\id}}\circ h=i$, where $i:H^2(\Gamma,k)\to H^2([\Gamma,\Gamma],k)$ is
  the map associated with the inclusion $[\Gamma,\Gamma]\hookrightarrow\Gamma$.
\end{lem}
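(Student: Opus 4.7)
The plan is to unwind both sides of the asserted equality $\tg_{\bar{\id}}\circ h = i$ as honest central extensions of $[\Gamma,\Gamma]$ by $k$, and then exhibit an isomorphism between them that is the identity on $k$ and on $[\Gamma,\Gamma]$.

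Fix a class in $H^2(\Gamma,k)$ represented by a central extension $\pi\colon 1\to k\to L\to \Gamma\to 1$ as in \eqref{eq:pik}. On the one hand, the class $i(\pi)\in H^2([\Gamma,\Gamma],k)$ is represented by the pullback extension
\[ 1\to k\to \pi^{-1}([\Gamma,\Gamma]) \to [\Gamma,\Gamma]\to 1. \]
On the other hand, unraveling \eqref{eq:beta} applied to the extension $\bar{\id}$ of \eqref{eq:extsq} with $\beta=h(\pi)\in\Hom(H_2(\Gamma,\bbZ),k)$, the class $\tg_{\bar{\id}}(h(\pi))$ is represented by the extension
\[ 1\to k\to (\Gamma\wedge\Gamma)^{h(\pi)}\to [\Gamma,\Gamma]\to 1, \qquad (\Gamma\wedge\Gamma)^{h(\pi)}=\bigl(\Gamma\wedge\Gamma\times k\bigr)/\gr(-h(\pi)).\]

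The bridge between the two is the homomorphism $\bar{\pi}\colon \Gamma\wedge\Gamma\to [L,L]$ of \eqref{eq:barpi}. By construction of $h(\pi)$ in \eqref{eq:h}, its restriction to $H_2(\Gamma,\bbZ)\subset \Gamma\wedge\Gamma$ is precisely $h(\pi)\colon H_2(\Gamma,\bbZ)\to k$, where $k$ is identified with its image in $L$. I would then define
\[ \Phi\colon \Gamma\wedge\Gamma\times k\longrightarrow L, \qquad (\omega,t)\mapsto \bar{\pi}(\omega)\cdot t, \]
which makes sense since $k$ is central in $L$, and whose image lies in $\pi^{-1}([\Gamma,\Gamma])$ because $\pi\circ\bar{\pi}=\bar{\id}$ takes values in $[\Gamma,\Gamma]$. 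For any $a\in H_2(\Gamma,\bbZ)$ one has $\Phi(a,-h(\pi)(a))=h(\pi)(a)\cdot(-h(\pi)(a))=1$, so $\Phi$ kills $\gr(-h(\pi))$ and descends to a homomorphism
\[ \tilde\Phi\colon (\Gamma\wedge\Gamma)^{h(\pi)}\longrightarrow \pi^{-1}([\Gamma,\Gamma]). \]

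The final step is to check that $\tilde\Phi$ fits into a morphism of central extensions
\[ \begin{tikzcd}
1 \arrow[r] & k \arrow[r] \arrow[d, equal] & (\Gamma\wedge\Gamma)^{h(\pi)} \arrow[r] \arrow[d, "\tilde\Phi"] & [\Gamma,\Gamma] \arrow[r] \arrow[d, equal] & 1 \\
1 \arrow[r] & k \arrow[r] & \pi^{-1}([\Gamma,\Gamma]) \arrow[r] & [\Gamma,\Gamma] \arrow[r] & 1
\end{tikzcd} \]
and then invoke the five lemma, which forces $\tilde\Phi$ to be an isomorphism. Commutativity of the right square is clear because on $\Gamma\wedge\Gamma$ the composite $\pi\circ\bar{\pi}$ is $\bar{\id}$, while commutativity of the left square follows from $\Phi(1,t)=t$. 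Since the two extensions of $[\Gamma,\Gamma]$ by $k$ are thus equivalent, they define the same class in $H^2([\Gamma,\Gamma],k)$, proving $\tg_{\bar{\id}}(h(\pi))=i(\pi)$.

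The only real subtlety is the bookkeeping: verifying that $\bar{\pi}$ indeed restricts to $h(\pi)$ on $H_2(\Gamma,\bbZ)$ (which is precisely the content of the left square of \eqref{eq:h}), and that the formula for $\Phi$ genuinely respects the relation defining the quotient by $\gr(-h(\pi))$. I expect no substantive obstacle beyond carefully tracking these identifications.
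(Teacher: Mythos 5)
Your proof is correct and takes a genuinely different route from the paper. The paper's argument is indirect: it fixes a Schur covering $\pi_0$, invokes Lemma~\ref{lem:split} to split $H^2(\Gamma,k)$ as $\tg_{\pi_0}(\Hom(H_2(\Gamma,\bbZ),k))\oplus\Ext^1_\bbZ(\Gamma/[\Gamma,\Gamma],k)$, observes that the $\Ext^1$ summand dies under both $h$ and $i$, and then uses naturality of the transgression applied to the morphism of extensions~\eqref{eq:E_0} to get a commutative square whose diagonal is filled in by Lemma~\ref{lem:split}. Your argument bypasses all of that by working directly with representing extensions: you realize $i(\pi)$ as the pullback $1\to k\to\pi^{-1}([\Gamma,\Gamma])\to[\Gamma,\Gamma]\to 1$, realize $\tg_{\bar{\id}}(h(\pi))$ as $(\Gamma\wedge\Gamma\times k)/\gr(-h(\pi))$, and exhibit an explicit equivalence of extensions $\tilde\Phi$ built from $\bar\pi$. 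The key facts you use — that $\bar\pi$ restricts on $H_2(\Gamma,\bbZ)$ to $h(\pi)$ (the left square of~\eqref{eq:h}) and that $\pi\circ\bar\pi=\bar{\id}$ — are exactly what make $\Phi$ kill $\gr(-h(\pi))$ and make both squares commute, after which the five lemma finishes. What the paper's approach buys is economy given that Lemma~\ref{lem:split} is established anyway and needed elsewhere; what yours buys is a self-contained, transparent proof that does not rely on picking a Schur covering or on the splitting lemma, and which in fact constructs an explicit isomorphism of the two extensions rather than merely identifying two cohomology classes. One small remark: the check that $i(\pi)$ is represented by the pullback $\pi^{-1}([\Gamma,\Gamma])$ should be flagged as the standard correspondence between restriction in degree~$2$ and pullback of extensions, and you should be a bit more careful with mixed additive/multiplicative notation when computing $\Phi(a,-h(\pi)(a))$; neither of these is a real gap.
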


\begin{proof}
We fix a Schur covering $\pi_0$ of $\Gamma$ and obtain an associated splitting of $H^2(\Gamma,k)$ as in Lemma~\ref{lem:split}.
Since $\Ext^1_\bbZ(\Gamma/[\Gamma,\Gamma],k)$ is in the kernels of both $h$ and $i$,
we are left to show that $\tg_{\bar{\id}}\circ h=i$ on $\tg_{\pi_0}(\Hom(H_2(\Gamma,\bbZ),k))$.
By the naturality of the transgression, applied to the homomorphism of extensions \eqref{eq:E_0}, we obtain the following commutative square 
\[
\begin{tikzcd} 
      \Hom(H_2(\Gamma,\bbZ),k) \arrow[r, "\tg_{\pi_0}"] \arrow[d, "\id"] & \tg_{\pi_0}(\Hom(H_2(\Gamma,\bbZ),k)) \arrow[d, "i"]\arrow[ld, dashed, "h" '] \\
            \Hom(H_2(\Gamma,\bbZ),k) \arrow[r, "\tg_{\bar{\id}}"] & H^2([\Gamma,\Gamma],k)
\end{tikzcd}
\]
By Lemma~\ref{lem:split} we have the commutativity of the upper left triangle, which in turn gives the commutativity of the lower right triangle. This finishes the proof.
\end{proof}

\subsection{The properties $(T_2)$ and $[T_2]$}

\begin{lem}\label{lem: finite index inclusion}
  Let $\Gamma$ be a group and let $\Lambda<\Gamma$ be a finite index subgroup. 
  If $\Gamma$ satisfies property $(T_n)$ then $\Lambda$ also satisfies property $(T_n)$ and the restriction homomorphism $H^m(\Gamma,\bbC)\to H^m(\Lambda,\bbC)$ is an isomorphism for every $m\leq n$.
  If $\Gamma$ satisfies property $[T_n]$ then $\Lambda$ also satisfies property $[T_n]$.
\end{lem}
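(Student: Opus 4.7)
The plan is to deduce all three assertions from Shapiro's lemma applied to induced unitary representations. Given any unitary $\Lambda$-representation $V$, the induced representation $W=\mathrm{Ind}_\Lambda^\Gamma V$ inherits a natural $\Gamma$-invariant Hilbert space structure by summing the $V$-inner product over coset representatives of $\Lambda$ in $\Gamma$, so $W$ is again unitary, and Shapiro provides a natural isomorphism $H^m(\Lambda,V)\cong H^m(\Gamma,W)$ in every degree.

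With this tool in hand I would first dispose of the $[T_n]$ assertion: if $\Gamma$ satisfies $[T_n]$ then $H^m(\Gamma,W)=0$ for every unitary $W$ and every $1\le m\le n$, so specializing to $W=\mathrm{Ind}_\Lambda^\Gamma V$ gives the vanishing for $\Lambda$. For the $(T_n)$ assertion the additional ingredient is the observation that if $V^\Lambda=0$ then $W^\Gamma=0$: indeed a $\Gamma$-invariant element of $W$, viewed as a $\Lambda$-equivariant function $\Gamma\to V$, must be constant with value in $V^\Lambda=0$. Thus $(T_n)$ for $\Gamma$ still forces $H^m(\Gamma,W)=0$ for $1\le m\le n$, and Shapiro transfers the vanishing to $\Lambda$.

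For the restriction isomorphism the idea is to decompose the permutation representation
\[\mathrm{Ind}_\Lambda^\Gamma\bbC\;\cong\;\bbC[\Gamma/\Lambda]\;\cong\;\bbC\,\oplus\, W_0\]
as a $\Gamma$-representation, where the $\bbC$-summand is spanned by the constant function and $W_0$ is its orthogonal complement, a unitary representation with $W_0^\Gamma=0$. Applying Shapiro to the right-hand side yields
\[H^m(\Lambda,\bbC)\;\cong\;H^m(\Gamma,\bbC)\,\oplus\,H^m(\Gamma,W_0),\]
and $(T_n)$ for $\Gamma$ annihilates the second summand in the range $1\le m\le n$, while the case $m=0$ is tautological. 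The one point requiring care, and the only plausible obstacle, is the naturality check that the projection onto the first summand literally coincides with the restriction map; this follows from the fact that the composition of the inclusion $\bbC\hookrightarrow \mathrm{Ind}_\Lambda^\Gamma\bbC$ (as constant functions) with the $\Lambda$-equivariant evaluation at the identity coset $\mathrm{Ind}_\Lambda^\Gamma\bbC\twoheadrightarrow\bbC$ (which implements the Shapiro isomorphism at the level of coefficients) is the identity on $\bbC$. Should this identification feel unsatisfying, the equivalent route of combining the restriction-corestriction formula (to get injectivity, using that $[\Gamma:\Lambda]$ is invertible in $\bbC$) with a dimension argument based on the same decomposition (for surjectivity) is also available.
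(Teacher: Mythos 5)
Your proof is correct and follows essentially the same route as the paper's: Shapiro's lemma applied to unitary induction, the observation that $V^\Lambda=0$ forces $(\mathrm{Ind}_\Lambda^\Gamma V)^\Gamma=0$, and the decomposition $\mathrm{Ind}_\Lambda^\Gamma\bbC\cong\bbC\oplus W_0$ with $W_0=L^2_0(\Gamma/\Lambda)$. The only cosmetic difference is that you dispose of the $[T_n]$ assertion directly by applying Shapiro to an arbitrary unitary $\Lambda$-representation, whereas the paper records it as a consequence of the restriction isomorphism together with property $(T_n)$ for $\Lambda$.
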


\begin{proof}
Let $m\leq n$.
Let $V$ be a unitary representation of $\Lambda$, and let $U$ be the corresponding induced unitary representation of $\Gamma$.
  By Shapiro's lemma, $H^m(\Lambda,V)\cong H^m(\Gamma,U)$.
  If $V$ has no non-trivial $\Lambda$-invariant vectors then $U$ has no non-trivial $\Gamma$-invariant vectors and by $(T_n)$, $H^m(\Gamma,U)=0$. We conclude that $H^m(\Lambda,V)=0$, thus $\Lambda$ has property $(T_n)$.
  In case $V=\bbC$, we have $U=L^2(\Gamma/\Lambda)=\bbC\oplus L^2_0(\Gamma/\Lambda)$, thus
  \[ H^m(\Lambda,\bbC)\cong H^m(\Gamma,\bbC)\oplus H^m(\Gamma, L^2_0(\Gamma/\Lambda))\]
  and the second summand vanishes by property $(T_n)$.
It follows that restriction homomorphism $H^m(\Gamma,\bbC)\to H^m(\Lambda,\bbC)$ is an isomorphism for every $m\leq n$.
  In particular, if $\Gamma$ satisfies property $[T_n]$ then $\Lambda$ also satisfies property $[T_n]$.
\end{proof}

We consider the central extension $\bar{\id}$ given in \eqref{eq:extsq} and the associated transgression $\tg_{\bar{\id}}: \Hom(H_2(\Gamma,\bbZ),V) \to H^2([\Gamma,\Gamma],V)$.

\begin{lem} \label{lem:tgT}
Let $\Gamma$ be a group which satisfies property $(T_2)$.
Then the associated transgression $\tg_{\bar{\id}}\colon \Hom(H_2(\Gamma,\bbZ),\bbC) \to H^2([\Gamma,\Gamma],\bbC)$
is an isomorphism.
\end{lem}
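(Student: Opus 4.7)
The plan is to relate the transgression $\tg_{\bar{\id}}$ to the restriction map in cohomology via Lemma~\ref{lem:i}, and then exploit the fact that property $(T_2)$ forces $[\Gamma,\Gamma]$ to have finite index in $\Gamma$.

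I would first observe that, over $\bbC$, the surjection $h\colon H^2(\Gamma,\bbC)\to\Hom(H_2(\Gamma,\bbZ),\bbC)$ appearing in the universal coefficient sequence~\eqref{eq:uct} is in fact an isomorphism: the kernel $\Ext^1_\bbZ(\Gamma/[\Gamma,\Gamma],\bbC)$ vanishes because $\bbC$ is divisible, and hence injective as a $\bbZ$-module. Combined with the identity $\tg_{\bar{\id}}\circ h=i$ from Lemma~\ref{lem:i}, where $i\colon H^2(\Gamma,\bbC)\to H^2([\Gamma,\Gamma],\bbC)$ is the restriction map, this reduces the statement to the assertion that $i$ is an isomorphism.

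Next, I would invoke property $(T_2)$ to control the abelianization. Since $(T_2)$ implies $(T_1)$, which by the paper's remark coincides with Kazhdan's property $(T)$ for countable groups, and since property $(T)$ forces the abelianization of a (necessarily finitely generated) countable group to be finite, the subgroup $[\Gamma,\Gamma]$ has finite index in $\Gamma$. Lemma~\ref{lem: finite index inclusion} then applies to the inclusion $[\Gamma,\Gamma]<\Gamma$ and yields that the restriction $H^m(\Gamma,\bbC)\to H^m([\Gamma,\Gamma],\bbC)$ is an isomorphism for every $m\le 2$; in particular, $i$ is an isomorphism in degree two.

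Combining the two steps, $\tg_{\bar{\id}}=i\circ h^{-1}$ is a composition of two isomorphisms, and the lemma follows. The only delicate point is the implicit appeal to the standard fact that property $(T)$ forces the abelianization to be finite, which uses that $\Gamma$ is countable (equivalently finitely generated under $(T)$); this is automatic in the paper's intended applications, where $\Gamma$ is an irreducible lattice in a real Lie group.
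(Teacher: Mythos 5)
Your proof is correct and follows essentially the same route as the paper: identify $h$ as an isomorphism via the universal coefficient sequence, use Lemma~\ref{lem:i} to write $\tg_{\bar{\id}}\circ h=i$, and conclude by noting that property $(T_2)$ forces $[\Gamma,\Gamma]$ to have finite index so that $i$ is an isomorphism by Lemma~\ref{lem: finite index inclusion}. The only (harmless) variation is your justification that $\Ext^1_\bbZ(\Gamma/[\Gamma,\Gamma],\bbC)=0$ via injectivity of the divisible group $\bbC$, whereas the paper cites finiteness of the abelianization; both are valid, and yours is even slightly more general at that step.
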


\begin{proof}
By the Universal Coefficient Theorem~\eqref{eq:uct} we see that $h$ is an isomorphism, as $\Gamma$ has finite abelianization.
  By Lemma~\ref{lem: finite index inclusion} we have that $i\colon H^2(\Gamma,\bbC)\to H^2([\Gamma,\Gamma],\bbC)$ is an isomorphism and by Lemma~\ref{lem:i} we conclude that $\tg_{\bar{\id}}$ is an isomorphism as well.
\end{proof}

\begin{thm} \label{thm: univext}
  Let $\Gamma$ be an $\mbox{FP}_2$ group which satisfies property $(T_2)$. Then $\Gamma\wedge \Gamma$ satisfies property $[T_2]$.
\end{thm}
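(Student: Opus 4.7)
The plan is to combine the central extension~\eqref{eq:extsq},
\[ 1\to A\to L\to \Lambda\to 1, \]
(writing $A=H_2(\Gamma,\bbZ)$, $L=\Gamma\wedge\Gamma$, $\Lambda=[\Gamma,\Gamma]$) with the Hochschild-Serre-Lyndon spectral sequence
\[ E_2^{p,q}=H^p(\Lambda,H^q(A,V))\Rightarrow H^{p+q}(L,V). \]
First I collect the hypotheses: $\mbox{FP}_2$ of $\Gamma$ makes $A$ a finitely generated abelian group; property $(T_2)$ forces $(T)$ and hence a finite abelianization of $\Gamma$, so $\Lambda$ has finite index in $\Gamma$ and, by Lemma~\ref{lem: finite index inclusion}, inherits $(T_2)$ together with $H^i(\Lambda,\bbC)\cong H^i(\Gamma,\bbC)$ for $i\le 2$; finally, Lemma~\ref{lem:tgT} yields that the transgression $\tg_{\bar{\id}}\colon H^1(A,\bbC)\to H^2(\Lambda,\bbC)$ is an isomorphism.

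For an arbitrary unitary $L$-representation $V$, I split orthogonally $V=V^A\oplus V'$. This decomposition is $L$-equivariant because $A$ is central in $L$, so it is enough to handle the two summands separately. For $V'$, which by definition has no $A$-fixed vectors, I decompose it spectrally under the abelian group $A$ over $\hat A\setminus\{1\}$; on each isotypic component $V'_\chi$ with $\chi\neq 1$ the Koszul complex for the finitely generated abelian group $A$ contracts because some $1-\chi(a)$ acts as an invertible scalar, so $H^\ast(A,V'_\chi)=0$. Assembling the direct integral in the appropriate Hausdorff (reduced) sense yields $H^q(A,V')=0$ for every $q$, which collapses the spectral sequence and forces $H^\ast(L,V')=0$.

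The trivial-$A$-action summand $V^A$ is a unitary $\Lambda$-representation, so $H^q(A,V^A)=V^A\otimes H^q(A,\bbC)$ with $\Lambda$ acting only on the first tensor factor. I split further $V^A=U\oplus (V^A)^\Lambda$ with $U$ the orthogonal complement of the $\Lambda$-invariants. Property $(T_2)$ of $\Lambda$ kills $H^p(\Lambda,U\otimes H^q(A,\bbC))$ for $0\le p\le 2$ (for $p=0$ because $U^\Lambda=0$, and for $p=1,2$ because this $\Lambda$-representation has no invariant vectors). On the trivial-$L$ summand $(V^A)^\Lambda$ the spectral sequence reproduces the argument of Lemma~\ref{lem:tg} with scalars extended to the Hilbert space $(V^A)^\Lambda$: Lemma~\ref{lem:tgT} identifies $d_2^{0,1}$ with $\tg_{\bar{\id}}$ and thus makes it an isomorphism (killing $E_3^{0,1}$ and $E_3^{2,0}$); the finite abelianization of $\Lambda$ gives $E_2^{1,0}=0=E_2^{1,1}$; and the symmetrization argument inside the proof of Lemma~\ref{lem:tg} makes $d_2^{0,2}$ injective, so $E_3^{0,2}=0$. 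All graded pieces of $H^1(L,V)$ and $H^2(L,V)$ therefore vanish.

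The main obstacle I expect is the analysis of $H^\ast(A,V')$ when $V'$ carries a continuous $A$-spectrum accumulating at the trivial character: the operator $1-a$ need not have closed image, so abstract (non-reduced) cohomology of $A$ need not vanish. One has to argue either within the reduced/Hausdorff cohomology convention that governs property $[T_n]$, or to reduce a general $V'$ to the dense class of ``spectral-gap'' subrepresentations by a continuity/approximation argument. Everything else is routine bookkeeping in the spectral sequence, faithfully repeating the proof of Lemma~\ref{lem:tg} with Hilbert-valued coefficients and invoking $(T_2)$ of $\Lambda$ to dispose of the non-invariant summands.
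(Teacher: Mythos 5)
Your overall strategy --- the central extension \eqref{eq:extsq}, the Hochschild--Serre--Lyndon spectral sequence, the use of $(T_2)$ for $[\Gamma,\Gamma]$, and the splitting off of a trivial summand handled by the argument of Lemma~\ref{lem:tg} --- agrees with the paper's. The genuine gap is exactly the one you flag at the end, and it is not a fixable technicality inside your setup: the claim $H^\ast(A,V')=0$ for an arbitrary unitary $A$-module $V'$ without $A$-invariants is false in ordinary (non-reduced) group cohomology whenever the $A$-spectrum of $V'$ accumulates at the trivial character. The operators $1-\chi(a)$ are invertible fiberwise but with unbounded inverses, so the Koszul contraction does not assemble over the direct integral, and fiberwise vanishing of $H^\ast(A,V'_\chi)$ does not give vanishing of $H^\ast(A,V')$. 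Since property $[T_n]$ in this paper is defined via ordinary cohomology of unitary representations, you cannot silently switch to the reduced/Hausdorff theory without an extra comparison argument, which you do not supply.

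The paper removes this obstacle at the very start by invoking \cite{Bader-Sauer}*{Theorem~3.7}, which reduces the verification of $H^1,H^2$-vanishing for all unitary representations to the case of \emph{irreducible} ones. For an irreducible $V$, Schur's lemma forces the central subgroup $A$ to act through a single character $\chi$, and the dichotomy becomes clean: if $\chi\neq 1$, some $1-\chi(a)$ is an invertible \emph{scalar}, so $H^\ast(A,V)=0$ with no direct-integral or uniformity issue; if $\chi=1$, then $V$ factors through $[\Gamma,\Gamma]$ and your treatment of $V^A$ via $(T_2)$ of $[\Gamma,\Gamma]$, together with the trivial-coefficient computation of Lemma~\ref{lem:tg} and Lemma~\ref{lem:tgT}, goes through. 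So the missing ingredient in your write-up is precisely this reduction to irreducibles; your orthogonal decomposition $V=V^A\oplus V'$ cannot replace it, because $V'$ need not have any spectral gap away from the trivial $A$-character.
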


\begin{proof}
Since $\Gamma$ has $(T_2)$, in particular, it has $(T)$. Hence $\Gamma$ has a finite abelianization. By Lemma~\ref{lem: finite index inclusion} we conclude that $[\Gamma,\Gamma]$ satisfies $(T_2)$. So $[\Gamma,\Gamma]$ has a finite abelianization, too.
    We set $A=H_2(\Gamma,\mathbb{Z})$.
     Since $\Gamma$ satisfies $\mbox{FP}_2$, it is finitely generated, thus $A$ is a finitely generated abelian group. 
     By~\cite[Proposition VIII.5.1]{Brown}, the finite index subgroup $[\Gamma,\Gamma]<\Gamma$ also satisfies $\mbox{FP}_2$. Using \cite[Proposition 2.2]{Kochloukova-Lima} we conclude from \eqref{eq:extsq} that $\Gamma\wedge \Gamma$ satisfies $\mbox{FP}_2$ as well. 
Let $V$ be a unitary $\Gamma\wedge\Gamma$-representation. 
By~\cite{Bader-Sauer}*{Theorem~3.7} the vanishing of $H^i(\Gamma\wedge\Gamma,W)$ for $i\in\{1,2\}$ and every unitary $\Gamma$-representation~$W$ is equivalent to the corresponding vanishing for every irreducible unitary $\Gamma$-representation~$W$. We may and will assume that $V$ is irreducible. 
In case $V=\bbC$ is the trivial representation, Lemma~\ref{lem:tgT} says that $\tg_{\bar{\id}}$
is an isomorphism. Thus $H^i(\Gamma\wedge\Gamma,V)=0$ by Lemma~\ref{lem:tg}, specialized to $L=\Gamma\wedge \Gamma$.
We may thus assume that $V$ is a non-trivial irreducible unitary $\Gamma$-representation. 

  We will show the vanishing of $H^i(\Gamma\wedge\Gamma,V)$, $i\in\{1,2\}$, via the Hochschild-Serre-Lyndon spectral sequence associated  with the central extension \eqref{eq:extsq}. Specifically, we show that 
  $E_2^{pq}=0$ for $p+q\le 2$ where 
   \[ E_2^{pq}=H^p([\Gamma,\Gamma], H^q(A,V)).\]
  By Schur's lemma, $A$ acts on $V$ via a character $\chi$. First suppose that $\chi$ is non-trivial. Then the group cohomology $H^\ast(A,V)=\operatorname{Ext}^\ast_{\bbC[A]}(\bbC, V)$ vanishes since multiplication with some $a\in A$ in any of the two variables of the Ext-term induces the same endomorphism of the group cohomology. Hence it is simultaneously the identity and multiplication with $\chi(a)$ which is only possible if the Ext-term vanishes. 
  We conclude that $E_2^{pq}=0$ for every $p,q$.
  Now suppose that $\chi$ is trivial. This means that $V$ is coming from a non-trivial  irreducible unitary $[\Gamma,\Gamma]$-representation.
  Since $[\Gamma, \Gamma]$ has property $(T_2)$, we have that $H^i([\Gamma,\Gamma],W)=0$ for every $i\in\{0,1,2\}$ and any unitary $[\Gamma,\Gamma]$-representation with no non-trivial invariant vectors. The $[\Gamma,\Gamma]$-modules $H^\ast(A, V)$ in the $E_2$-term of the spectral sequence are unitary $[\Gamma,\Gamma]$-representations with no invariant vectors since they are isomorphic to $H^\ast(A,\bbC)\otimes V$ (with the trivial action on $H^\ast(A,\bbC)$). So the $E_2^{pq}$-entries are zero for $p+q\le 2$. We conclude that, indeed, $H^i(\Gamma\wedge\Gamma,V)=0$ for $i\in\{1,2\}$.
  \end{proof}

\subsection{Central extensions of semisimple Lie groups and their lattices}

%\begin{lem} \label{lem:comcohom}
%  For a connected semisimple compact Lie group $M$, 
%  \[ H^1_s(M,\bbC)=H^2_s(M,\bbC)=0. \]
%\end{lem}

%\begin{proof}
%We let $M_1,\ldots,M_n$ be the simple factors of $M$ and set $\bar{M}=\prod M_i$.
%By \cite[Theorem 2.2]{Mimura} we have that for each $i$, $H^1_s(M_i,\bbC)=H^2_s(M_i,\bbC)=0$
%and we deduce by Kenneth formula that $H^1_s(\bar{M},\bbC)=H^2_s(\bar{M},\bbC)=0$.
%We let $N\lhd \bar{M}$ be the kernel of the product map $\bar{M} \to M$. Note that $N$ is finite and central.
%We consider the Serre spectral sequence associated with the fibration $\bar{M} \to M$, given by $E_2^{pq}=H^p_s(M,H^q_s(N,\bbC))$, which converges to $H^*_s(\bar{M},\bbC)$.
%The lemma follows easily from the five terms exact sequence,
%\[ 0 \to E_2^{10} \to H^1_s(\bar{M},\bbC) \to E_2^{01} \to E_2^{20} \to H^2_s(\bar{M},\bbC) \]
%which gives $E_2^{10}=0$ and $E_2^{20} \cong E_2^{01}=0$.
%\end{proof}

We will denote by $H^*_c$ the continuous group cohomology.
Recall that the fundamental group of a connected Lie group is always abelian.

\begin{prop} \label{prop:pi1}
  For a connected semisimple Lie group with finite center $G$,
  \[ \pi_1(G)\otimes \bbC\cong H^2_c(G,\bbC). \]
\end{prop}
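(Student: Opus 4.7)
The plan is to relate both sides of the claimed isomorphism to the cohomology of the compact dual symmetric space. Fix a maximal compact subgroup $K<G$ and let $G_u$ denote the simply connected compact Lie group with Lie algebra $\mathfrak g_u=\mathfrak k\oplus i\mathfrak p$ coming from a Cartan decomposition of $\mathfrak g$. The compact dual is $X_u=G_u/K$. By the Borel--Wallach theorem quoted in the introduction, $H^2_c(G,\bbC)\cong H^2(X_u,\bbC)$, so the task is reduced to identifying $H^2(X_u,\bbC)$ with $\pi_1(G)\otimes\bbC$.

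For this identification I would run the Serre spectral sequence of the principal $K$-bundle $K\to G_u\to X_u$. Since $G_u$ is simply connected and $K$ is connected, $X_u$ is simply connected, so
\[ E_2^{p,q}\cong H^p(X_u,\bbC)\otimes H^q(K,\bbC). \]
The crucial input is that $G_u$ is compact, simply connected and semisimple, hence $H^1(G_u,\bbC)=H^2(G_u,\bbC)=0$ by Hopf's theorem. Inspecting the $p+q\le 2$ corner together with $H^1(X_u,\bbC)=0$, the vanishing of $H^1(G_u,\bbC)$ forces the transgression $d_2\colon H^1(K,\bbC)\to H^2(X_u,\bbC)$ to be injective, while the vanishing of $H^2(G_u,\bbC)$ forces it to be surjective (no later differentials can hit $E^{2,0}$ for degree reasons). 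Hence $d_2$ is an isomorphism, and $H^2(X_u,\bbC)\cong H^1(K,\bbC)$.

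Finally I would identify $H^1(K,\bbC)$ with $\pi_1(G)\otimes\bbC$. Contractibility of the symmetric space $G/K$ (which uses the finite-center assumption to guarantee $K$ is a genuine maximal compact subgroup) yields $\pi_1(G)\cong\pi_1(K)$. For the compact connected Lie group $K$, Chevalley--Eilenberg gives $H^1(K,\bbC)\cong(\mathfrak k/[\mathfrak k,\mathfrak k])^*$, a $\bbC$-vector space of dimension $\dim Z(K)^\circ$; on the other hand, $\pi_1(K)$ is a finitely generated abelian group whose free rank is also $\dim Z(K)^\circ$ (the dimension of the central torus of $K$), so $\pi_1(K)\otimes\bbC$ is a $\bbC$-vector space of the same dimension. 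Chaining these isomorphisms yields the claim.

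The main obstacle is the transgression isomorphism, which rests on three consequences of the finite-center hypothesis on $G$: $K$ is compact and connected, $G_u$ can be chosen simply connected, and $G_u$ is semisimple (so that $H^1(G_u,\bbC)=H^2(G_u,\bbC)=0$ and $X_u$ is simply connected). Once these facts are verified, the remainder of the plan is purely formal bookkeeping in the spectral sequence and in the topology of compact Lie groups.
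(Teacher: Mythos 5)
Your plan is essentially the paper's proof: pass to the compact dual $X_u$ via van Est/Borel--Wallach, run the Serre spectral sequence of the fibration with fiber a compact group of type $\mathfrak k$ and total space a compact form $U$, use that $U$ is semisimple so $H^1(U,\bbC)=H^2(U,\bbC)=0$ (Hopf), deduce $H^2(X_u,\bbC)\cong H^1(\text{fiber},\bbC)$, and finally identify $\pi_1(G)\otimes\bbC$ with $H^1(K,\bbC)$ via $\pi_1(G)\cong\pi_1(K)$ and Hurewicz. The paper phrases the spectral-sequence step through the five-term exact sequence rather than inspecting the corner, and quotes Mimura for $G\simeq K$ rather than contractibility of $G/K$, but these are cosmetic.

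One imprecision worth correcting: you take $G_u$ simply connected and write $X_u=G_u/K$ as if the maximal compact $K<G$ were a subgroup of $G_u$. In general it is not; e.g.\ for $G=\mathrm{PSp}(2g,\bbR)$ the group $K=U(g)/\{\pm1\}$ does not embed in the simply connected form $\mathrm{Sp}(g)$. The fiber of $G_u\to X_u$ is the analytic subgroup $K'<G_u$ with Lie algebra $\mathfrak k$, which may be a proper cover of $K$. Your argument still goes through because $H^1(\cdot,\bbC)$ of a compact connected Lie group depends only on the Lie algebra (it is $(\mathfrak k/[\mathfrak k,\mathfrak k])^*$, as you note), so $H^1(K',\bbC)\cong H^1(K,\bbC)$; but you should say this, or simply do what the paper does and take $U<G(\bbC)$ to be a compact form of $G$ itself (not the simply connected one), which genuinely contains $K$.
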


In the proof below we will denote by $H^*_s$ the singular cohomology of topological spaces.

\begin{proof}
We let $K<G$ be a maximal compact subgroup, $U<G(\bbC)$ be a compact form of $G$ containing $K$, and $X=U/K$ be the corresponding dual symmetric space. By \cite[Theorem 1.1]{Mimura}, $G$ is homotopy equivalent to $K$. Thus by Hurewicz theorem, $\pi_1(G)\otimes \bbC\cong \pi_1(K)\otimes \bbC \cong H^1_s(K,\bbC)$. By the van Est isomorphism~\cite{Borel-Wallach}*{Theorem~IX.5.6},  $H^*_c(G,\bbC)\cong H^*_s(X,\bbC)$. The Serre spectral sequence associated with the fibration $U\to X$ and with $E_2$-term $E_2^{pq}=H^p_s(X,H^q_s(K,\bbC))$, converges to $H^*_s(U,\bbC)$.
As $U$ is a connected compact Lie group, the cohomology ring of $U$ is an exterior algebra generated by classes in odd degrees~\cite[Proposition 7.3]{Borel53}. Since $U$ is semisimple, we obtain that $H^1_s(U,\bbC)=H^2_s(U,\bbC)=0$. 
From the five terms exact sequence 
\[ 0 \to E_2^{10} \to H^1_s(U,\bbC) \to E_2^{01} \to E_2^{20} \to H^2_s(U,\bbC) \]
we deduce that $E_2^{01} \cong E_2^{20}$. Hence
\[ H^1_s(K,\bbC) \cong H^0_s(X,H^1_s(K,\bbC))  \cong H^2_s(X,H^0_s(K,\bbC))\cong H^2_s(X,\bbC). \]
Thus $\pi_1(G)\otimes \bbC\cong H^2_c(G,\bbC)$.
\end{proof}

\begin{prop} \label{prop:tilde}
  Let $G$ be a connected semisimple Lie group with finite center and no compact factors. Let $\Gamma<G$ be a lattice.
  Denote by $\tilde{G}\to G$ the universal cover and let $\pi:\tilde{\Gamma}\to \Gamma$ be the pull back of $\Gamma$.
  If $G$ has property $(T)$ and real rank at least 3
  then the natural map $\bar{\pi}:\Gamma \wedge \Gamma \to [\tilde{\Gamma},\tilde{\Gamma}] < \tilde{\Gamma}$ has a finite kernel and image of finite index.
\end{prop}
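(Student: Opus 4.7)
The plan is to use the snake lemma to reduce the statement to showing that a certain classifying map $h_0\colon H_2(\Gamma,\bbZ)\to\pi_1(G)$ is a virtual isomorphism, and then to establish this by factoring the relevant transgression through continuous cohomology and invoking Proposition~\ref{prop:pi1}.

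Because $\pi_1(G)$ is central in $\tilde\Gamma$, for any set-theoretic section $s\colon\Gamma\to\tilde\Gamma$ and any $a,b\in\tilde\Gamma$ one has $[a,b]=[s\pi(a),s\pi(b)]$, so $\bar\pi$ already surjects onto $[\tilde\Gamma,\tilde\Gamma]$ and the ``image of finite index'' assertion is trivially satisfied. Embedding $\bar\pi$ in the commutative diagram of central extensions
\[
\begin{tikzcd}
1 \arrow[r] & H_2(\Gamma,\bbZ) \arrow[r] \arrow[d, "h_0"] & \Gamma\wedge\Gamma \arrow[r, "\bar{\id}"] \arrow[d, "\bar{\pi}"] & {[\Gamma,\Gamma]} \arrow[r] \arrow[d, "="] & 1 \\
1 \arrow[r] & \pi_1(G)\cap[\tilde{\Gamma},\tilde{\Gamma}] \arrow[r] & {[\tilde{\Gamma},\tilde{\Gamma}]} \arrow[r] & {[\Gamma,\Gamma]} \arrow[r] & 1
\end{tikzcd}
\]
and applying the snake lemma identifies $\ker(\bar\pi)$ with $\ker(h_0)$, where $h_0\in\Hom(H_2(\Gamma,\bbZ),\pi_1(G))$ is the image under the Universal Coefficient epimorphism~\eqref{eq:uct} of the class of $\tilde\Gamma$ in $H^2(\Gamma,\pi_1(G))$. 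It thus suffices to prove $\ker(h_0)$ is finite.

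Since $H_2(\Gamma,\bbZ)$ and $\pi_1(G)$ are both finitely generated abelian ($\Gamma$ is finitely presented as a lattice in a Lie group, and $\pi_1$ of a semisimple group with finite center is finitely generated), $\ker(h_0)$ is finite iff $h_0\otimes\bbC$ is injective, iff the $\bbC$-linear dual $\Hom(\pi_1(G),\bbC)\to\Hom(H_2(\Gamma,\bbZ),\bbC)$ is surjective. By naturality of the UCT, this dual equals $h\circ\tg_\pi$, where $\tg_\pi$ is the transgression~\eqref{eq:tgpi} of the central extension $1\to\pi_1(G)\to\tilde\Gamma\to\Gamma\to 1$; property~$(T)$ forces $\Gamma^{\mathrm{ab}}$ to be finite, so $\Ext^1_\bbZ(\Gamma^{\mathrm{ab}},\bbC)=0$ and $h\colon H^2(\Gamma,\bbC)\to\Hom(H_2(\Gamma,\bbZ),\bbC)$ is already an isomorphism by~\eqref{eq:uct}. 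The problem is thus reduced to showing $\tg_\pi\colon\Hom(\pi_1(G),\bbC)\to H^2(\Gamma,\bbC)$ is surjective.

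Naturality of transgression for the morphism of central extensions $(\pi_1(G),\tilde\Gamma,\Gamma)\hookrightarrow(\pi_1(G),\tilde G,G)$ produces a factorization $\tg_\pi=\mathrm{res}\circ\tg_c$, where $\tg_c\colon\Hom(\pi_1(G),\bbC)\to H^2_c(G,\bbC)$ is the transgression in continuous group cohomology and $\mathrm{res}$ is restriction to the lattice. The five-term sequence in continuous cohomology, together with the vanishing $H^1_c(G,\bbC)=H^1_c(\tilde G,\bbC)=0$ for semisimple groups, shows $\tg_c$ is injective, hence an isomorphism by the dimension count of Proposition~\ref{prop:pi1}. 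The remaining step, and the main obstacle of the proof, is the surjectivity of $\mathrm{res}\colon H^2_c(G,\bbC)\to H^2(\Gamma,\bbC)$; for cocompact $\Gamma$ this is Matsushima's formula combined with vanishing of $H^2_c(G,\pi)$ for non-trivial irreducible unitaries in higher rank, while for general lattices it is deduced from the cohomological vanishing below the rank of~\cite{Bader-Sauer} applied to the complement $L^2_0(G/\Gamma)$ of the trivial summand in $L^2(G/\Gamma)$, ensuring that only the trivial representation contributes to $H^2(\Gamma,\bbC)$.
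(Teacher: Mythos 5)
Your argument is correct, and it lands on the same two key inputs as the paper's proof (Proposition~\ref{prop:pi1} identifying $\pi_1(G)\otimes\bbC$ with $H_c^2(G,\bbC)$, and the Bader--Sauer Theorem~F isomorphism $H^2_c(G,\bbC)\cong H^2(\Gamma,\bbC)$ in rank $\ge 3$), but it reaches the two conclusions by different routes and in one place improves on the paper. For the image, you observe that since $\pi_1(G)$ is central, every commutator $[a,b]$ in $\tilde\Gamma$ equals $[s\pi(a),s\pi(b)]$, so $\bar\pi$ is in fact \emph{surjective} onto $[\tilde\Gamma,\tilde\Gamma]$; this is cleaner and stronger than the paper's argument, which only deduces finite index by noting the quotient is abelian and invoking property $(T)$, and your version does not even need $(T)$ for that half. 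For the kernel, the paper's argument is shorter: having established that the image is of finite index, it deduces that $h_0=\bar\pi\vert_{H_2(\Gamma,\bbZ)}$ has finite cokernel, then compares ranks ($\operatorname{rk}H_2(\Gamma,\bbZ)=\dim H^2(\Gamma,\bbC)=\dim H^2_c(G,\bbC)=\operatorname{rk}\pi_1(G)$) to conclude the kernel is finite. You instead dualize via the Universal Coefficient Theorem and reduce to the surjectivity of the transgression $\tg_\pi$, which you factor through the continuous transgression $\tg_c$ and the restriction map. This is a legitimate and more conceptual route, and it makes visible \emph{why} $\tg_\pi$ is surjective, but it requires invoking the Hochschild--Serre five-term sequence in continuous group cohomology for the topological extension $\tilde G\to G$ (together with the naturality of transgression with respect to the inclusion $\Gamma\hookrightarrow G$). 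These facts are true but carry their own technical overhead; if you use this route you should cite where the continuous-cohomology spectral sequence is established for such extensions with discrete central kernel. A small simplification of your own argument: instead of showing $\tg_c$ is injective plus a dimension count, you can read off surjectivity of $\tg_c$ directly from the five-term sequence since $H^2_c(\tilde G,\bbC)=0$ for the simply connected semisimple $\tilde G$ (Whitehead's lemma via van Est), which is all you actually need.
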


\begin{rem}
We note that for $G=\text{SL}_3(\bbR)$, $\tilde{G}\to G$ is a double cover,
hence so is $\tilde{\Gamma}\to \Gamma$ for any lattice $\Gamma<G$.
However, by \eqref{eq:extsq}, the kernel of $\Gamma\wedge \Gamma \to [\Gamma,\Gamma]$ is isomorphic to $H_2(\Gamma,\bbZ)$, which varies dramatically.
This follows from a computation by Soul\'{e} of $H^2(\Gamma,\bbC)$.
Indeed, while $H^2(\text{SL}_3(\bbZ),\bbC)=0$ by \cite[Theorem~4(iii)]{Soule}, there is a sequence of congruence subgroups $\Gamma_i<\text{SL}_3(\bbZ)$ such that the second Betti number of $\Gamma_i$ tends to~$\infty$ by \cite[Theorem~7(ii)]{Soule}. See also Remark~\ref{rem: infinite virtual betti}. 
\end{rem}

\begin{proof}
  We note that $\tilde{\Gamma}$ has property $(T)$ by \cite[Theorem 3.5.2]{Bekka}.
  We identify the kernel of $\pi:\tilde{\Gamma}\to \Gamma$ with $\pi_1(G)$.
  By property $(T)$, both $[\tilde{\Gamma},\tilde{\Gamma}]<\tilde{\Gamma}$ and $[\Gamma,\Gamma]<\Gamma$ are of finite index, thus
  we identify the kernel of the restriction map $\pi:[\tilde{\Gamma},\tilde{\Gamma}] \to [\Gamma,\Gamma]$
  with a finite index subgroup of $\pi_1(G)$.

  By the universal property of $\Gamma\wedge \Gamma$ we have $\bar{\id}=\pi \circ \bar{\pi}:\Gamma\wedge \Gamma\to [\Gamma,\Gamma]$, which is surjective.
  Since the kernel of $\pi$ is central,
  the image of $\bar{\pi}$
  is normal and $[\tilde{\Gamma},\tilde{\Gamma}]/\bar{\pi}(\Gamma\wedge \Gamma)$ is abelian.
  Using $(T)$, we conclude that the image of $\bar{\pi}$ is of finite index.

  The kernel of $\bar{\id}$ is $H_2(\Gamma;\bbZ)$ and the kernel of $\pi$ is a finite index subgroup of $\pi_1(G)$.
  These two finitely generated abelian groups have the same rank by Proposition~\ref{prop:pi1} since the restriction map
  $H^2(G,\bbC)\to H^2(\Gamma,\bbC)$ is an isomorphism by \cite{Bader-Sauer}*{Theorem~F} and $\rank_\bbR G\ge 3$.
  We conclude that $\bar{\pi}$ has a finite kernel.
\end{proof}

\begin{thm} \label{thm:tildeT2}
  Let $G$ be a semisimple Lie group with finite center and no compact factors. Let $\Gamma<G$ be a lattice.
  Denote by $\tilde{G}\to G$ the universal cover and let $\tilde{\Gamma}\to \Gamma$ be the pull back of $\Gamma$.
  If all simple factors of $G$ are of real rank at least 3
  then $G$ and $\Gamma$ have property $(T_2)$ and $\tilde{G}$ and $\tilde{\Gamma}$ have property $[T_2]$.
\end{thm}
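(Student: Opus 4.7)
The plan is to derive $(T_2)$ for $G$ and~$\Gamma$ directly from \cite{Bader-Sauer}*{Theorem~E}: since every simple factor has real rank at least $3$, that theorem furnishes property $(T_{r-1})$ with $r\geq 3$, which in particular implies $(T_2)$ for both~$G$ and~$\Gamma$. The substantive content is to upgrade $(T_2)$ to $[T_2]$ on the universal covers, which I would accomplish through a chain of cohomological transfers bridged by the exterior square.

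Being a lattice in a semisimple Lie group with finite center, $\Gamma$ is finitely presented and hence $\mbox{FP}_2$; combined with $(T_2)$, Theorem~\ref{thm: univext} yields property $[T_2]$ for $\Gamma\wedge\Gamma$. By Proposition~\ref{prop:tilde}, the comparison map $\bar\pi\colon\Gamma\wedge\Gamma\to\tilde{\Gamma}$ has finite kernel $F$ and image $\Lambda:=\bar\pi(\Gamma\wedge\Gamma)$ of finite index in $[\tilde{\Gamma},\tilde{\Gamma}]$; since $\tilde{\Gamma}$ has property $(T)$ by \cite[Theorem~3.5.2]{Bekka}, $[\tilde{\Gamma},\tilde{\Gamma}]$ itself is of finite index in $\tilde{\Gamma}$, and hence so is $\Lambda$.

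I then propagate $[T_2]$ in two further steps. For a unitary $\Lambda$-representation $V$, pulled back to $\Gamma\wedge\Gamma$ through $\bar\pi$, the finite central kernel~$F$ acts trivially on~$V$, and since $|F|$ is invertible in $\bbC$ we have $H^q(F,V)=0$ for $q\geq 1$; the Hochschild-Serre spectral sequence of $1\to F\to\Gamma\wedge\Gamma\to\Lambda\to 1$ therefore collapses to an isomorphism $H^p(\Lambda,V)\cong H^p(\Gamma\wedge\Gamma,V)$, which vanishes for $p\in\{1,2\}$. Next, for a unitary $\tilde{\Gamma}$-representation~$V$, the identity $\operatorname{cor}\circ\operatorname{res}=[\tilde{\Gamma}:\Lambda]\cdot\operatorname{id}$ is invertible in characteristic zero, so restriction embeds $H^i(\tilde{\Gamma},V)$ into $H^i(\Lambda,V|_\Lambda)$, which vanishes for $i\in\{1,2\}$; hence $\tilde{\Gamma}$ acquires $[T_2]$.

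Finally, $[T_2]$ lifts from $\tilde{\Gamma}$ to $\tilde{G}$ via Shapiro's lemma: since $\tilde{G}/\tilde{\Gamma}\to G/\Gamma$ is a bijection, $\tilde{\Gamma}$ is a lattice in $\tilde{G}$, and for any continuous unitary $\tilde{G}$-representation~$V$ the trivial summand $\bbC\subset L^2(\tilde{G}/\tilde{\Gamma})$ realises $V$ as a $\tilde{G}$-equivariant direct summand of $L^2(\tilde{G}/\tilde{\Gamma})\otimes V\cong\operatorname{Ind}_{\tilde{\Gamma}}^{\tilde{G}}(V|_{\tilde{\Gamma}})$; Shapiro's lemma for continuous cohomology then identifies $H^*_c(\tilde{G},L^2(\tilde{G}/\tilde{\Gamma})\otimes V)$ with $H^*(\tilde{\Gamma},V|_{\tilde{\Gamma}})$, so $H^i_c(\tilde{G},V)=0$ for $i\in\{1,2\}$. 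The place that most demands care is exactly this last step, because $\tilde{G}$ need not be a linear Lie group when $\pi_1(G)$ is infinite, but continuous group cohomology and Shapiro's lemma are set up in the full generality of locally compact groups, so the argument carries over without modification.
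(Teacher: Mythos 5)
Your proposal is correct and follows essentially the same outline as the paper's proof: $(T_2)$ for $G$ and $\Gamma$ from~\cite{Bader-Sauer}, then Theorem~\ref{thm: univext} to pass to $\Gamma\wedge\Gamma$, then Proposition~\ref{prop:tilde} to transfer $[T_2]$ to $\tilde\Gamma$, and finally Shapiro's lemma to conclude for $\tilde G$. The paper's proof is terse; you helpfully unpack the commensurability transfer implicit in ``by Proposition~\ref{prop:tilde} we deduce\dots'' into its two standard pieces: collapse of the Hochschild--Serre spectral sequence for the finite central kernel in characteristic zero, and the corestriction argument $\operatorname{cor}\circ\operatorname{res}=[\tilde\Gamma:\Lambda]\cdot\operatorname{id}$ to go up a finite index.

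One small deviation worth noting: for the final step you run Shapiro's lemma with whatever lattice $\tilde\Gamma$ you started from, whereas the paper explicitly writes ``specializing to a cocompact lattice $\Gamma<G$.'' This is not a stylistic caprice: for a \emph{non-cocompact} $\Gamma$ the identification $H^*_c(\tilde G,L^2(\tilde G/\tilde\Gamma)\otimes V)\cong H^*(\tilde\Gamma,V|_{\tilde\Gamma})$ via $L^2$-induction requires more care (it is precisely the kind of place where the non-cocompact Matsushima story becomes subtle). Since $\tilde G$ is independent of which $\Gamma$ one begins with and always admits cocompact lattices, the fix is painless --- do exactly what the paper does and feed a cocompact $\Gamma$ into steps~1--3 before applying Shapiro --- but you should say so rather than assert that the general locally compact formalism ``carries over without modification.'' Your separate worry about $\tilde G$ not being linear is the right thing to check but is indeed harmless for continuous cohomology.
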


\begin{proof}
  The fact that $G$ and $\Gamma$ have property $(T_2)$ is proved in \cite{Bader-Sauer}.
  By Theorem~\ref{thm: univext}, we get that $\Gamma\wedge \Gamma$ has property $[T_2]$
  and by Proposition~\ref{prop:tilde} we deduce that also $\tilde{\Gamma}$ has property $[T_2]$.
  Specializing to a cocompact lattice $\Gamma<G$, it also follows, by Shapiro Lemma, that  $\tilde{G}$ has the property $[T_2]$, as then $\tilde{\Gamma}<\tilde{G}$ is a cocompact lattice satisfying property $[T_2]$.
\end{proof}

\subsection{A recollection of CSP and a result of Deligne}

A lattice $\Gamma<G$ being \emph{arithmetic} means that there is a number field $k$, a semisimple simply connected
$k$-algebraic subgroup $\bbH$ of $\GL_n$ such that there is an epimorphism $\prod_{s\in V_\infty(k)}\bbH(k_v)\twoheadrightarrow G$ with compact kernel and the image of $\bbH(\calO)$ is commensurable with $\Gamma$. Here $V_\infty(k)$ is the set of Archimedean valuations of $k$ and $\calO$ is the ring of integers of $k$.

The lattice $\Gamma<G$ is said to have the \emph{congruence subgroup property (CSP)} if the kernel of $\widehat{\bbH(\calO)}\to \bbH(\hat\calO)$ is finite where the symbol indicates the profinite completion, respectively. The intersection of all finite index subgroups of $\Gamma$ is called the \emph{profinite radical} of $\Gamma$, denoted $\prof(\Gamma)$.
The following theorem was essentially proved by Deligne~\cite{De}. For a clear and concise exposition see~\cite{Witte} as well as~\cite[\S5.2]{frob}.

\begin{thm}[Deligne]\label{thm: deligne}
  Let $\Gamma<G$ be a lattice satisfying the congruence subgroup property. Assume that $\pi_1(G)$ is infinite. Let  $\tilde G\to G$ be the universal cover, which is the universal central extension of $G$. Then the preimage $\tilde\Gamma$ is not residually finite. Furthermore,
  \[\prof(\tilde\Gamma)\subset\ker\bigl(\tilde\Gamma\to \Gamma\bigr)\]
  is a subgroup of finite index.
\end{thm}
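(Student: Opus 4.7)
The plan is to prove the two assertions in turn and then deduce the failure of residual finiteness. Write $\rho\colon\tilde\Gamma\to\Gamma$ for the covering projection and set $K:=\ker(\rho)$; as $\tilde G\to G$ is the topological universal cover, $K$ is naturally identified with $\pi_1(G)$, which by hypothesis is infinite.

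For the easy containment $\prof(\tilde\Gamma)\subset K$, I would use that $\Gamma$ is residually finite because it is arithmetic. For every finite-index subgroup $H<\Gamma$, the pullback $\rho^{-1}(H)<\tilde\Gamma$ has finite index and contains $K$. Since $\bigcap_H H=\{1\}$ in $\Gamma$, intersecting the pullbacks gives $\bigcap_H \rho^{-1}(H)=K$, and $\prof(\tilde\Gamma)$, being contained in every finite-index subgroup, is contained in $K$.

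The essential step is the finite-index assertion $[K:\prof(\tilde\Gamma)]<\infty$, equivalent to the finiteness of the image of $K$ in the profinite completion $\widehat{\tilde\Gamma}$. Any finite quotient $\tilde\Gamma\twoheadrightarrow F$ restricts on $K$ to a map onto a finite abelian subgroup $A\subset F$ and fits in a central extension $1\to A\to E\to\bar\Gamma\to 1$, where $\bar\Gamma$ is a finite quotient of $\Gamma$; the class of $E$ in $H^2(\bar\Gamma,A)$ is determined functorially by the extension class $[\tilde\Gamma]\in H^2(\Gamma,K)$. Invoking CSP, every such $\bar\Gamma$ factors, up to a fixed finite kernel, through a congruence quotient $\bbH(\calO)\twoheadrightarrow\bbH(\calO/I)$, and the family of classes so obtained assembles into a continuous cohomology class on $\bbH(\hat\calO)$ with coefficients in a profinite quotient of $K$. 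The Margulis--Matsumoto--Moore theorem identifies the relevant inverse limit with the \emph{metaplectic kernel}, which is finite for the simply connected semisimple algebraic groups under consideration. This finiteness translates directly into the finiteness of the image of $K$ in $\widehat{\tilde\Gamma}$, yielding $[K:\prof(\tilde\Gamma)]<\infty$; the detailed bookkeeping for this translation is carried out in~\cite{Witte} and~\cite[\S5.2]{frob}, which I would follow closely.

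Once these two steps are in place, the failure of residual finiteness is immediate: $\prof(\tilde\Gamma)\subset K$ has finite index in the infinite group $K\cong\pi_1(G)$, so $\prof(\tilde\Gamma)$ is itself infinite and $\tilde\Gamma$ cannot be residually finite. The main obstacle is the middle step, namely converting the abstract extension class $[\tilde\Gamma]$ into a continuous cohomology class on the congruence completion and then invoking finiteness of the metaplectic kernel; the first and third steps are formal consequences.
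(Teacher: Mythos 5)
The paper itself offers no proof of this theorem---it cites Deligne~\cite{De}, pointing to \cite{Witte} and \cite[\S5.2]{frob} for the exposition---and your proposal is a correct sketch of precisely the argument in those sources: the easy containment $\prof(\tilde\Gamma)\subset\ker(\tilde\Gamma\to\Gamma)$ from residual finiteness of the arithmetic group $\Gamma$, the finite-index step by combining CSP with finiteness of the metaplectic kernel, and the failure of residual finiteness as a formal corollary of the first two steps together with $|\pi_1(G)|=\infty$. A minor attribution point: the finiteness of the metaplectic kernel in the needed generality is due to Matsumoto, Moore, and Prasad--Rapinchuk rather than Margulis, but this does not affect the substance of your argument.
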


\subsection{{Proof of Theorem~\ref{thm: main stability result}}}

A finitely generated linear group is residually finite by Malcev's theorem.
Accordingly, for a finitely generated group $\Gamma$, every homomorphism to a linear group factors via $\Gamma/\prof(\Gamma)$.  

\begin{lem} \label{lem:profker}
  Let $\Gamma$ be a finitely generated group. Let
  $N\lhd \Gamma$ be normal subgroup of $\Gamma$ contained in $\prof(\Gamma)$.
  If $\Gamma$ is Frobenius stable then so is $\Gamma/N$.
\end{lem}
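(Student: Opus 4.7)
The plan is to lift an asymptotic homomorphism on $\Gamma/N$ to one on $\Gamma$, apply the hypothesis that $\Gamma$ is Frobenius stable, and then show that each approximating true homomorphism automatically descends through the quotient. Concretely, let $q\colon \Gamma\to \Gamma/N$ be the quotient map and suppose $\vp = \{\vp_n\}$ is an asymptotic homomorphism from $\Gamma/N$ to $\g=\{U_n(\bbC),d^F_n\}$. Then $\vp\circ q = \{\vp_n\circ q\}$ is an asymptotic homomorphism from $\Gamma$ to $\g$. By Frobenius stability of $\Gamma$, there are true homomorphisms $\psi_n\colon \Gamma\to U_n(\bbC)$ with $d^F_n(\psi_n(x),\vp_n(q(x)))\to 0$ for every $x\in \Gamma$.

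The heart of the argument is to show $N\subset \ker\psi_n$ for every $n$, so that each $\psi_n$ descends to a homomorphism $\tilde\psi_n\colon \Gamma/N\to U_n(\bbC)$. For this I use precisely the paragraph preceding the lemma: since $\Gamma$ is finitely generated, the image $\psi_n(\Gamma)$ is a finitely generated linear group, hence residually finite by Malcev. Thus $\psi_n$ factors through $\Gamma/\prof(\Gamma)$, meaning $\prof(\Gamma)\subset \ker\psi_n$. Since by assumption $N\subset \prof(\Gamma)$, we get $N\subset \ker\psi_n$ as required, and a well-defined homomorphism $\tilde\psi_n\colon\Gamma/N\to U_n(\bbC)$ with $\tilde\psi_n\circ q=\psi_n$.

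Finally, setting $\tilde\psi = \{\tilde\psi_n\}$, the estimate $d^F_n(\tilde\psi_n(q(x)),\vp_n(q(x))) = d^F_n(\psi_n(x),\vp_n(q(x)))\to 0$ and the surjectivity of $q$ give that for every $y\in \Gamma/N$, $d^F_n(\tilde\psi_n(y),\vp_n(y))\to 0$. This is exactly the statement that $\Gamma/N$ is Frobenius stable.

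There is no real obstacle here; the only subtle point is the residual finiteness step, but that is a direct consequence of the classical Malcev theorem applied to the finitely generated linear image $\psi_n(\Gamma)\le U_n(\bbC)$, combined with the definition of $\prof(\Gamma)$. No further input is needed beyond finite generation of $\Gamma$ (which is implicit, since Frobenius stability is defined for finitely generated groups in the setting considered).
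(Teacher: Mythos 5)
Your proof is correct and follows essentially the same route as the paper: lift the asymptotic homomorphism along $q$, apply Frobenius stability of $\Gamma$, and use Malcev's theorem (via the paragraph preceding the lemma) to see that each approximating $\psi_n$ kills $\prof(\Gamma)\supset N$ and hence descends to $\Gamma/N$. The paper's proof is word-for-word the same argument, only stated more tersely.
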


\begin{proof}
  Let $\phi=(\phi_n)$ be an asymptotic homomorphism of $\Gamma/N$. Let $p\colon\Gamma\to \Gamma/N$ be the projection. So $\phi\circ p=(\phi_n\circ p)$ is an asymptotic homomorphism of $\Gamma$. By assumption, $\phi\circ p$ is approximated by a sequence of true homomorphisms $\psi=(\psi_n)$. For every $n\in\bbN$, the image $\psi_n(\Gamma)$ is a finitely generated linear group, hence residually finite. Thus each $\psi_n$ factors through $\Gamma/\prof(\Gamma)$, in particular, through $\Gamma/N$. The induced homomorphisms on $\Gamma/N$ approximates $\phi$.
\end{proof}

\begin{lem} \label{lem:finind}
  Let $\Gamma$ be a finitely presented group and let $N\lhd \Gamma$ be a finite normal subgroup.
  If $\Gamma$ is Frobenius stable then also $\Gamma/N$ is Frobenius stable.
\end{lem}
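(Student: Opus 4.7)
The plan is to lift the asymptotic homomorphism from $\Gamma/N$ to $\Gamma$, invoke Frobenius stability of $\Gamma$ to obtain approximating true homomorphisms, and then use finiteness of $N$ together with an averaging/quantization argument to show that these true homomorphisms become exactly trivial on $N$ from some index on, so that they descend to $\Gamma/N$.

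In detail, given an asymptotic homomorphism $\vp=(\vp_n)$ of $\Gamma/N$ into $(U_n(\bbC),d_n^F)$, I would form the asymptotic homomorphism $\vp\circ p$ of $\Gamma$, where $p\colon\Gamma\to\Gamma/N$ denotes the projection, and apply Frobenius stability of $\Gamma$ to obtain true homomorphisms $\psi_n\colon\Gamma\to U_n(\bbC)$ with $\|\psi_n(y)-\vp_n(p(y))\|_F\to 0$ for every $y\in\Gamma$. A short preliminary calculation gives $\vp_n(e)\to I$ in Frobenius norm (using that $\vp_n(e)$ is asymptotically idempotent yet genuinely unitary); combining this with the approximation above, applied to $g\in N$, yields $\|\psi_n(g)-I\|_F\to 0$ for each $g\in N$. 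Since $N$ is finite, the convergence is uniform over $g\in N$.

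The crucial step is now to consider the averaged operator
\[ P_n\ :=\ \frac{1}{|N|}\sum_{g\in N}\psi_n(g), \]
which, as $\psi_n|_N$ is a genuine unitary representation of the finite group $N$, is the orthogonal projection of $\bbC^n$ onto the subspace of $N$-invariant vectors. The uniform convergence $\|\psi_n(g)-I\|_F\to 0$ over $g\in N$ forces $\|P_n-I\|_F\to 0$; yet $\|P_n-I\|_F^2 = n-\rank(P_n)$ is a non-negative integer, so it must vanish for all sufficiently large $n$. Equivalently, $P_n=I$ for $n$ large, so $\psi_n|_N$ is the trivial representation, and $\psi_n$ descends to a genuine homomorphism $\bar\psi_n\colon\Gamma/N\to U_n(\bbC)$.

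Setting $\bar\psi_n$ arbitrarily for the remaining finitely many $n$, one then checks that $\|\bar\psi_n(x)-\vp_n(x)\|_F\to 0$ for every $x\in\Gamma/N$ by choosing any lift in $\Gamma$, whence $\Gamma/N$ is Frobenius stable. The only non-cosmetic point is the quantization step: the integrality of $\|P_n-I\|_F^2$ on projections upgrades qualitative smallness of $P_n-I$ to exact triviality of $\psi_n|_N$, and it is precisely the interplay between the finiteness of $N$ and this integrality that drives the argument. Finite presentation of $\Gamma$ plays no role beyond being the ambient hypothesis.
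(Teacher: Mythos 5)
Your argument is correct, and it is a genuinely different proof from the one in the paper. The paper works throughout with the $\epsilon$--$\delta$ reformulation of stability available for finitely presented groups: it fixes a finite presentation with a generating set containing lifts of $N$, and introduces the explicit ``spectral gap''
\[\alpha=\min_{\pi}\max_{g\in N}\|\pi(g)-1\|_F,\]
the minimum ranging over nontrivial irreducible representations of $N$. After approximating an almost-representation by a true representation $\rho'$ of $\Gamma$ with all $\|\rho'(g)-1\|_F<\alpha$, it invokes the fact that the Frobenius norm of a representation dominates that of any subrepresentation to conclude $\rho'|_N$ has no nontrivial irreducible summand. You instead stay in the sequential formulation, push an asymptotic homomorphism of $\Gamma/N$ up to $\Gamma$, approximate there, and then detect exact triviality of $\psi_n|_N$ through the averaging projection $P_n=\tfrac{1}{|N|}\sum_{g\in N}\psi_n(g)$: since $\psi_n|_N$ is an honest unitary representation of a finite group, $P_n$ is the orthogonal projection onto the $N$-invariants, and $\tr(I-P_n)=n-\rank P_n$ is an integer tending to $0$, hence eventually $0$. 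This is the same rigidity phenomenon (discreteness of the representation theory of a finite group) exploited through a different, and arguably cleaner, quantization: the integrality of the rank deficiency of a projection replaces the constant $\alpha$ and the subrepresentation argument. Your route also does not use the finite presentation of $\Gamma$, which the paper's route needs only to justify passing to the $\epsilon$--$\delta$ picture. One cosmetic nit: the paper defines $\|A\|_F=\tr(A^*A)$ without the square root, so with that convention you should write $\|P_n-I\|_F=n-\rank P_n$ rather than squaring the left side; with the standard convention your formula is the right one, and either way the argument goes through.
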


In the proof below, $\|\cdot\|$ means the Frobenius norm.

\begin{proof}
  We let $F_S\to \Gamma$ be a surjection from the free group on a finite set $S$
  and assume as we may that there exists a subset $\bar{N}\subseteq S$ which image is $N$.
  Let $R\subset F_S$ be a finite set of relations for $\Gamma$ and let $T=R\cup \bar{N} \subset F_S$, which is a finite set of relations for $\Gamma/N$.

  For every $\epsilon>0$ there exists $\delta>0$ such that every Frobenius $\delta$-almost representation of $\Gamma$ is Frobenius $\epsilon$-close to a representation.
  That is, for every homomorphism $\rho:F_S\to U(n)$ such that for every $r\in R$, $\|\rho(r)-1\|<\delta$, there exists a homomorphism $\rho':F_S\to U(n)$ such that for every $r\in R$, $\rho'(r)=0$ and for every $s\in S$, $\|\rho(s)-\rho'(s)\|<\epsilon$.

  We let $\alpha$ be the minimum of $\max_{n\in N} \|\pi(n)-1\|$, ranging over all non-trivial irreducible representations $\pi$ of $N$. 

  We fix $\epsilon>0$. Without loss of the generality, we assume $\epsilon\leq \alpha/2$.
  We choose $\delta$ accordingly and again, without loss of the generality, we assume $\delta\leq \alpha/2$.
  Let $\rho:F_S\to U(n)$ be a homomorphism such that for every $t\in T$, $\|\rho(t)-1\|<\delta$.
  In particular, for every $r\in R$, $\|\rho(r)-1\|<\delta$, thus there exists a homomorphism $\rho':F_S\to U(n)$ such that for every $r\in R$, $\rho'(r)=1$ and for every $s\in S$, $\|\rho(s)-\rho'(s)\|<\epsilon$.
  We view $\rho'$ as a representation of $\Gamma$.
  Restricting to $N$, we get that for every $n\in N$,
  \[ \|\rho'(n)-1\|\leq \|\rho(n)-\rho'(n)\|+\|\rho(n)-1\|< \epsilon+\delta\leq \alpha. \]
  We note that the Frobenius norm of a representation majorizes the Frobenius norm of any subrepresentation.
  It follows that $\rho'|_N$ contains no non-trivial irreducible subrepresentation, thus for every $t\in T$, $\rho'(t)=1$.
  We conclude that $\rho'$ descends to a representation of $\Gamma/N$, which is $\epsilon$ close to $\rho$.
\end{proof}

\begin{cor} \label{cor:N}
  Let $\Gamma$ be a finitely generated group and let $N\lhd \Gamma$ be a normal subgroup such that $\Gamma/N$ is finitely presented.
  Assume that the intersection of $N$ with the profinite radical of $\Gamma$ is of finite index in $N$.
  If $\Gamma$ is Frobenius stable then also $\Gamma/N$ is Frobenius stable.
\end{cor}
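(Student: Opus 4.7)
The plan is to interpolate between $\Gamma$ and $\Gamma/N$ using the subgroup
\[ N_0 = N\cap \prof(\Gamma), \]
and to apply Lemma~\ref{lem:profker} and Lemma~\ref{lem:finind} in succession. Note that $N_0$ is normal in $\Gamma$ as the intersection of two normal subgroups, and by hypothesis $[N:N_0]<\infty$.

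First I would apply Lemma~\ref{lem:profker} to the pair $(\Gamma, N_0)$: since $N_0\subseteq \prof(\Gamma)$ by construction and $\Gamma$ is Frobenius stable by hypothesis, the quotient $\Gamma/N_0$ is Frobenius stable. Next I observe that $\Gamma/N_0$ fits into a short exact sequence
\[ 1\to N/N_0 \to \Gamma/N_0 \to \Gamma/N \to 1 \]
with $N/N_0$ finite and $\Gamma/N$ finitely presented; a standard fact about extensions (e.g.\ by lifting a finite presentation of $\Gamma/N$ and adjoining the finitely many relations coming from the finite kernel $N/N_0$ and its conjugation action) shows that $\Gamma/N_0$ is again finitely presented. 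Now $N/N_0$ is a finite normal subgroup of the finitely presented, Frobenius stable group $\Gamma/N_0$, so Lemma~\ref{lem:finind} applies and yields that $(\Gamma/N_0)/(N/N_0)\cong \Gamma/N$ is Frobenius stable, which is the conclusion.

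The argument has essentially no hard step: the only genuine point to check is the finite presentability of $\Gamma/N_0$, which is the standard fact that an extension of a finitely presented group by a finite group is finitely presented. Everything else is bookkeeping that combines the two previously proved lemmas, one handling kernels inside the profinite radical and the other handling finite kernels.
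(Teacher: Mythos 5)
Your proof is correct and follows the paper's argument exactly: you set $N_0 = N\cap\prof(\Gamma)$, apply Lemma~\ref{lem:profker} to get stability of $\Gamma/N_0$, check finite presentability of the extension, and then apply Lemma~\ref{lem:finind} to the finite normal subgroup $N/N_0$. No discrepancies.
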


\begin{proof}
  Let $N_0$ be the intersection of $N$ with the profinite radical of $\Gamma$.
  Then $\Gamma/N_0$ is an extension of $\Gamma/N$ by the finite group $N/N_0$, hence it is finitely presented. By Lemma~\ref{lem:profker}, applied for $N_0\lhd \Gamma$, $\Gamma/N_0$ is Frobenius stable and by Lemma~\ref{lem:finind}, applied to $N/N_0\lhd \Gamma/N_0$, $\Gamma/N$ is Frobenius stable.
\end{proof}

\begin{proof}[Proof of Theorem~\ref{thm: main stability result}]
  Let $\Gamma<G$ be as in Theorem~\ref{thm: main stability result}.
  Then $\Gamma$ has $(T_2)$ by \cite{Bader-Sauer}.
  If no factor of $G$ is of Hermitian type then $H^2(G,\bbC)=0$ and by \cite{Bader-Sauer} also $H^2(\Gamma,\bbC)=0$, thus $\Gamma$ has $[T_2]$, so it is Frobenius stable by Theorem~\ref{thm: main old result}.
  From now on we assume that at least one of the factors of $G$ is of Hermitian type, thus $\pi:\tilde{G}\to G$ has an infinite kernel. We denote $\tilde{\Gamma}=\pi^{-1}(\Gamma)$.
  By Theorem~\ref{thm:tildeT2}, $\tilde{\Gamma}$ is a $[T_2]$-group and by Theorem~\ref{thm: main old result} it is Frobenius stable.
  We let $N\lhd \tilde{\Gamma}$ be the kernel of the extension $\tilde{\Gamma} \to \Gamma$.
  By Corollary~\ref{cor:N}, if the intersection of $N$ with the profinite kernel of $\tilde{\Gamma}$ is of finite index in $N$ then $\Gamma$ is Frobenius stable.
  By Theorem~\ref{thm: deligne}, this is indeed the case, as $\Gamma$ is assumed to have the congruence subgroup property. Thus $\Gamma$ is Frobenius stable.
\end{proof}

\section{Operator instability}

The following is essentially due to J. Wolf and A. Borel.

\begin{thm} \label{thm:css}
Let $U$ be a compact connected Lie group, $K<U$ a closed subgroup containing no normal subgroup of $U$ and assume that $M=U/K$ is a Riemannian symmetric space.
Then $M$ is an odd dimensional rational homology sphere if and only if, up to a cover, either $U=\SO(n+1)$ and $K=\SO(n)$ for some odd $n$ or $U=\SU(3)$ and $K=\SO(3)$.  
\end{thm}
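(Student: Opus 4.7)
The plan is to combine two classical classifications: one of compact symmetric spaces that happen to be rational homology spheres, and one of transitive Lie group actions on spheres. First, I would invoke Wolf's classification~\cite{Wolf}*{Theorem~1} of the compact connected Riemannian symmetric spaces that are rational homology spheres. Restricted to the odd-dimensional case, his list consists precisely of the spheres $S^{2k+1}$, the odd-dimensional real projective spaces $\bbr P^{2k+1}$, and the exceptional $5$-dimensional Wu manifold $\SU(3)/\SO(3)$. Since $\bbr P^{2k+1}$ is double-covered by $S^{2k+1}$, the real projective case is absorbed into the spherical case upon passing to a cover, so only the latter two cases need to be analyzed.

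Second, I would apply Borel's classification~\cite{Borel} of transitive effective actions of compact connected Lie groups on spheres. Up to cover, the only possibilities for $U/K\cong S^n$ are $\SO(n+1)/\SO(n)$, $\SU(m)/\SU(m-1)$, $\Sp(m)/\Sp(m-1)$, the variants $\Sp(m)\cdot\Sp(1)/\Sp(m-1)\cdot\Sp(1)$ and $\Sp(m)\cdot U(1)/\Sp(m-1)\cdot U(1)$, and three sporadic examples on $S^6$, $S^7$ and $S^{15}$ involving the exceptional group $G_2$ and the spin groups. For each entry I would test the symmetric-pair criterion: the existence of an involutive automorphism of $U$ whose fixed-point subgroup has identity component $K$, or equivalently the bracket closure $[\mathfrak{m},\mathfrak{m}]\subset\mathfrak{k}$ for the orthogonal complement $\mathfrak{m}$ of $\mathfrak{k}$ in $\mathfrak{u}$. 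A direct case-by-case inspection shows that only the pair $(\SO(n+1),\SO(n))$ satisfies this criterion; all the others fail either on the bracket condition or on the nonexistence of the required involution. Combined with the hypothesis that $\dim M$ is odd, this forces $n$ to be odd and yields the first listed case of the theorem.

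The Wu manifold $\SU(3)/\SO(3)$ is the symmetric pair arising from complex conjugation on $\SU(3)$, and its rational cohomology is $\bbQ$ in degrees $0$ and $5$ and zero in between, so it is indeed an odd-dimensional rational homology sphere; this gives the second listed case. The converse direction is immediate, since odd-dimensional spheres and the Wu manifold are known to be rational homology spheres. The hardest part will be the second step: the uniform verification that among Borel's list none of the non-orthogonal transitive sphere actions carries a Riemannian symmetric structure is unavoidably a finite case analysis, and one must be careful with the ``up to a cover'' normalization so as to correctly identify the isotropy subgroup in each Borel entry and rule out stray symmetric structures on, e.g., $S^7$ or $S^{15}$.
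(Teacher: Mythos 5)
Your proposal is correct and takes essentially the same route as the paper: reduce via Wolf's Theorem~1 to odd-dimensional spheres (absorbing $\bbr P^{2k+1}$ up to a cover) plus the Wu manifold, then run Borel's classification of transitive compact group actions on spheres against the symmetric-pair condition to isolate $\SO(n+1)/\SO(n)$. The paper's proof is just a compressed version of this two-step argument.
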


\begin{proof}
    It follows from \cite[Theorem 1]{Wolf} that, up to a cover, either $M$ is an actual odd dimensional sphere or $M=\SU(3)/\SO(3)$. 
    However, \cite[Theorem 3]{Borel} classifies the pairs $K<U$ for which $U/K$ is an odd dimensional sphere, and while comparing to the classification of compact symmetric spaces, one gets that the only fitting pair is $U=\SO(n+1)$ and $K=\SO(n)$.    
\end{proof}

\begin{rem} \label{rem:homsphere}
    Note that a compact manifold $M$ is an odd dimensional rational homology sphere if and only if for all even $i>0$, $H^i_s(M,\mathbb{Q})= 0$, where $H^*_s$ denotes the singular cohomology of $M$.
The ``only if" part is obvious and the ``if" part follows by Poincare duality.
Indeed, assuming that for all even $i>0$, $H^i_s(M,\mathbb{Q})= 0$ and $M$ is not an odd dimensional rational homology sphere, if $n=\dim(M)$ is even we get a contradiction by setting $i=n$, and if $n$ is odd we get some $j<n$ with $H^j(\Gamma,\mathbb{Q})\neq 0$, thus also $H^{n-j}(\Gamma,\mathbb{Q})\neq 0$ by Poincare duality, and either $j$ is even of $n-j$ is even.
\end{rem}

For a locally compact group $G$, we denote by $H^*_c$ the continuous group cohomology.

\begin{cor} \label{cor:sshs}
   Let $G$ be a semisimple real Lie group not locally isomorphic to $\SO(n,1)$ for some odd $n$ or $\SL_3(\mathbb{R})$. Then for some even $i>0$, $H^i_c(G,\mathbb{R})\neq 0$.
\end{cor}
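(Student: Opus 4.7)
The plan is to pass to the compact dual symmetric space via the van Est isomorphism and then combine Theorem~\ref{thm:css} with Remark~\ref{rem:homsphere}.

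Concretely, as in the proof of Proposition~\ref{prop:pi1}, I would let $K<G$ be a maximal compact subgroup, $U<G(\bbC)$ a compact form containing $K$, and $X=U/K$ the associated compact dual symmetric space. The van Est isomorphism gives $H^*_c(G,\bbR)\cong H^*_s(X,\bbR)$, so by Remark~\ref{rem:homsphere} it suffices to show that $X$ is not an odd-dimensional rational homology sphere.

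Next I would decompose $G=\prod_{i=1}^l G_i$ into simple factors, with the corresponding product decomposition $X=\prod_i X_i$ into irreducible compact duals. For the simple case $l=1$, Theorem~\ref{thm:css} says that $X$ can only be an odd-dimensional rational homology sphere when $(U,K)$, up to cover, is either $(\SO(n+1),\SO(n))$ with $n$ odd or $(\SU(3),\SO(3))$; these correspond respectively to $G$ being locally isomorphic to $\SO(n,1)$ with $n$ odd or to $\SL_3(\bbR)$, both of which are excluded by hypothesis. So the simple case is a direct quote of Theorem~\ref{thm:css}.

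For $l\ge 2$ the hypothesis is vacuous (each excluded group is simple), and I would argue directly using K\"unneth, $H^*_s(X,\bbR)\cong \bigotimes_i H^*_s(X_i,\bbR)$. If any $X_i$ already carries a non-trivial even positive-degree class, tensoring with the $H^0$ of the other factors supplies one on $X$. Otherwise each $X_i$ is an odd-dimensional rational homology sphere, rationally $H^*(X_i,\bbQ)\cong H^*(S^{n_i},\bbQ)$ with $n_i$ odd, and the product of the top classes of two factors yields a non-zero class in degree $n_1+n_2$, which is even and positive. This parity observation in the product case is the only part requiring real work beyond invoking Theorem~\ref{thm:css}; the rest is essentially bookkeeping, and no serious obstacle is expected.
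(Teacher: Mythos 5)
Your proof is correct and follows essentially the same route as the paper's (one-line) proof: pass to the compact dual via van Est, then invoke Theorem~\ref{thm:css} and Remark~\ref{rem:homsphere}. The only difference is that you treat the semisimple-but-not-simple case separately via K\"unneth; this extra step is valid but unnecessary, since Theorem~\ref{thm:css} is stated for an arbitrary compact connected Lie group $U$ (not just a simple one), so it already rules out products of symmetric spaces from being odd-dimensional rational homology spheres.
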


\begin{proof}
    Follows at once from Theorem~\ref{thm:css}, by Remark~\ref{rem:homsphere} and the fact that the continuous group cohomology of $G$ is isomorphic to the singular cohomology of its dual symmetric space.
\end{proof}

\begin{thm} \label{thm:ohs}
    Let $G$ be a semisimple real Lie group not locally isomorphic to $\SO(n,1)$ for some odd $n$ or $\SL_3(\mathbb{R})$, and let $\Gamma<G$ be a cocompact lattice. 
    Then for some even $i>0$ we have $H^i(\Gamma,\mathbb{R})\neq 0$.
\end{thm}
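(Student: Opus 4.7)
The plan is to deduce Theorem~\ref{thm:ohs} directly from Corollary~\ref{cor:sshs} by transporting the non-vanishing even-degree cohomology class from $G$ to $\Gamma$ via Matsushima's formula. This is essentially the argument that was sketched in the introduction immediately after Theorem~\ref{thm:mainO}.

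First I would recall Matsushima's formula~\cite{Matsushima}: for $\Gamma<G$ a cocompact lattice in a semisimple real Lie group, the restriction map induces an injection
\[ H^*_c(G,\bbR) \hookrightarrow H^*(\Gamma,\bbR). \]
Equivalently, as noted in the introductory sketch, this injection is provided by Shapiro's lemma applied to the unitary $\Gamma$-representation $L^2(G/\Gamma)$, which contains the trivial representation as a direct summand since $\Gamma$ is cocompact. Either formulation works and is standard.

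Next I would apply Corollary~\ref{cor:sshs} to the group $G$: since $G$ is not locally isomorphic to $\SO(n,1)$ for $n$ odd or to $\SL_3(\bbR)$, there exists some even $i>0$ such that $H^i_c(G,\bbR)\neq 0$. Composing with the Matsushima injection then yields that for the same even $i>0$, $H^i(\Gamma,\bbR)\neq 0$, which is the desired conclusion.

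There is no real obstacle here: the substantive content has already been established in Theorem~\ref{thm:css} and Corollary~\ref{cor:sshs}, where the classification of rational homology spheres among compact symmetric spaces (Wolf) and of homogeneous odd spheres (Borel) was used, together with the van Est isomorphism translating between continuous group cohomology of $G$ and singular cohomology of its dual compact symmetric space. The only thing to be careful about is that Matsushima's formula requires the lattice to be cocompact, which is exactly the hypothesis of the theorem; and that the class produced lives in a strictly positive even degree (so in particular it does not coincide with the trivial class in $H^0$), which is guaranteed by the statement of Corollary~\ref{cor:sshs}.
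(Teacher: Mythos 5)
Your proposal is correct and follows the paper's own proof exactly: cite Matsushima's formula for the injection $H^*_c(G,\mathbb{R})\hookrightarrow H^*(\Gamma,\mathbb{R})$ and then invoke Corollary~\ref{cor:sshs}.
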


\begin{proof}
    By Matsushima's formula~\cite{Matsushima} we have an injection $H^*_c(G,\mathbb{R})\hookrightarrow H^*(\Gamma,\mathbb{R})$,
    thus this follows by Corollary~\ref{cor:sshs}.
\end{proof}

\begin{proof}[Proof of Theorem~\ref{thm:mainO}]
Immediate by Theorem~\ref{thm:ohs} and Theorem~\ref{thm: main old result}(O).
\end{proof}

\subsection{Lattices in $\SL_3(\mathbb{R})$ and $\SO(n,1)$}

\begin{thm} \label{thm:SL3}
    Let $\Gamma$ be a lattice in $\SL_3(\mathbb{R})$. 
    Then there exists a finite index subgroup $\Gamma_1 \leq \Gamma$ satisfying $H^2(\Gamma_1,\mathbb{Q})\neq 0$.
    Moreover, for every finite index subgroup $\Gamma_2 \leq \Gamma_1$, $H^2(\Gamma_2,\mathbb{Q})\neq 0$.
\end{thm}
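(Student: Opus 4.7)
My strategy is to reduce to producing a single finite-index subgroup $\Gamma_1<\Gamma$ with $H^2(\Gamma_1,\bbQ)\ne 0$, then to construct one by exploiting Margulis arithmeticity together with known cohomology computations for arithmetic subgroups of $\SL_3$.

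The ``moreover'' clause is formal. For any finite-index inclusion $\Gamma_2\le\Gamma_1$ the corestriction satisfies $\mathrm{cor}\circ\mathrm{res}=[\Gamma_1:\Gamma_2]\cdot\id$ on $H^2(\Gamma_1,\bbQ)$, and since the index is invertible in $\bbQ$, restriction embeds $H^2(\Gamma_1,\bbQ)$ into $H^2(\Gamma_2,\bbQ)$. Thus once one such $\Gamma_1$ is produced, every finite-index $\Gamma_2\le\Gamma_1$ automatically inherits non-vanishing of $H^2$, and the second clause of the theorem follows.

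By Margulis arithmeticity, $\Gamma$ is arithmetic. Because every degree-$3$ central simple $\bbR$-algebra is split and $\SL_3(\bbR)$ arises only as an inner real form of type $A_2$, the underlying $\bbQ$-form of $\Gamma$ must be either $\SL_3/\bbQ$ itself (the non-uniform case, where $\Gamma$ is commensurable, up to conjugation in $\SL_3(\bbR)$, with $\SL_3(\bbZ)$) or $\SL_1(D)/\bbQ$ for a degree-$3$ central division algebra $D/\bbQ$ (the uniform case). In the non-uniform case I would invoke directly the computation of Soul\'e already recalled in the preceding Remark: some congruence subgroup $\Lambda<\SL_3(\bbZ)$ has positive second Betti number. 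Intersecting $\Lambda$ with a suitable conjugate of $\Gamma$ inside $\SL_3(\bbQ)$ produces a common finite-index subgroup $\Gamma_1$, whence $H^2(\Gamma_1,\bbQ)\ne 0$ by the transfer argument.

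The main obstacle is the uniform case, which cannot be addressed by applying the Matsushima decomposition to the trivial representation: since the compact dual $\SU(3)/\SO(3)$ is a $5$-dimensional rational homology sphere (the very exception in Wolf's classification that prevents Theorem~\ref{thm:mainO} from covering $\SL_3(\bbR)$), its singular cohomology vanishes in all even positive degrees. I would therefore seek a non-trivial cohomological contribution from a non-trivial automorphic representation. Writing $H^*(\Gamma_1,\bbC)$ via the Matsushima-Borel-Wallach decomposition as a sum over cohomological automorphic representations of $\SL_1(D)(\bbA_\bbQ)$, and using a Jacquet-Langlands style transfer from $\GL_3(\bbA_\bbQ)$ to its inner form $\GL_1(D)(\bbA_\bbQ)$, I would target a cuspidal automorphic representation whose archimedean component is a Vogan-Zuckerman $A_\mathfrak{q}$-module attached to a $\theta$-stable parabolic of $\mathfrak{sl}_3(\bbC)$ with $\dim_\bbC(\mathfrak{u}\cap\mathfrak{p}_\bbC)=2$, so that the corresponding $(\mathfrak{g},K)$-cohomology sits in degree $2$. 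Existence of such a global cuspidal form is standard (trace-formula methods \`a la Clozel or Borel-Labesse-Schwermer); transferring it to $\SL_1(D)$ and choosing $\Gamma_1$ to be a congruence subgroup deep enough to support it produces a non-trivial class in $H^2(\Gamma_1,\bbC)$, and the rationality of the automorphic decomposition yields the desired class in $H^2(\Gamma_1,\bbQ)$. The delicate point is controlling the transfer at the ramified places of $D$ and ensuring the class survives at finite level, rather than being killed by passing from $\GL_3$ to $\SL_3$ through the center.
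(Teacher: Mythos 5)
Your treatment of the ``moreover'' clause (transfer/corestriction) and of the non-uniform case (reduce to $\SL_3(\bbZ)$, invoke Soul\'e/Ash) matches the paper's argument. For the uniform case, however, you take a genuinely different route. The paper argues topologically: combining $\chi(\Gamma)=0$, Poincar\'e duality, and property~$(T)$ it reduces to showing the closed $5$-manifold $\Gamma_1\backslash G/K$ is not a rational homology sphere; it then invokes Lubotzky's theorem that $\Gamma$ has finite quotients with arbitrarily large $2$-Sylow subgroups to find $\Gamma_1\lhd\Gamma_0$ with $\Gamma_0/\Gamma_1\cong(\bbZ/2)^2$ acting freely, at which point Davis--Weinberger's surgery semicharacteristic theorem (valid in dimension $\equiv 1\pmod 4$) rules out the rational homology sphere. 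Your proposal instead goes through automorphic representation theory: Jacquet--Langlands transfer from $\GL_3$ to the inner form $\GL_1(D)$, together with the Vogan--Zuckerman classification to pin down an archimedean component contributing to $(\g,K)$-cohomology in degree~$2$. This is plausible in outline (symmetric square lifts of $\GL_2$ newforms that are Steinberg at the ramified places of $D$ would supply suitable global inputs), but it carries considerably more machinery and several genuinely delicate steps which you yourself flag: existence with prescribed local components, the passage from $\GL_1(D)$ to $\SL_1(D)$, and level control. One point that is not actually delicate is rationality: once $H^2(\Gamma_1,\bbC)\neq 0$, nonvanishing of $H^2(\Gamma_1,\bbQ)$ is automatic from universal coefficients, so you need not worry about Galois orbits of automorphic forms. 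The paper's route is more elementary and entirely self-contained; yours, if completed, would give finer information (a cuspidal class), but as written it is a program rather than a proof for the cocompact case.
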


\begin{proof}
We first note that if $\Gamma_2 \leq \Gamma_1$ is of finite index then $H^*(\Gamma_1,\mathbb{Q})$ injects into $H^*(\Gamma_2,\mathbb{Q})$ by Shapiro's Lemma.
This proves the ``moreover" part.  

The Euler characteristic of $G$ and its lattices is 0 and the homological dimension is bounded by 5, the dimension of the corresponding symmetric space.
We denote $b_n=H^n(\Gamma,\mathbb{Q})$ and note that $b_1=0$ by property T.

The lattice $\Gamma=\SL_3(\mathbb{Z})$ is of $\mathbb{Q}$-rank 2, hence of rational cohomological dimension 3, that is $b_4=b_5=0$, and the same applies also to its subgroups of finite index. 
It follows that for these groups $b_2=b_3-1$.
By \cite[Theorem 2]{Ash}, the kernel of $\SL_3(\mathbb{Z}) \to \SL_3(\mathbb{Z}/7)$ has $b_3>5815$, hence $b_2>5814$ (see also \cite[Theorem 7]{Soule}.
By Shapiro's Lemma the theorem follows for every lattice which is commensurable with $\SL_3(\mathbb{Z})$, thus for every non-uniform lattice.

We thus may and will assume that $\Gamma$ is cocompact.
Without loss of the generality, let $\Gamma$ be torsion-free.
By Poincare duality we have $b_5=b_0=1$, $b_4=b_1=0$ and $b_3=b_2$.
By \cite[Theorem B]{Lub}, there are finite quotients of $\Gamma$ with arbitarily large $2$-Sylow subgroups (actually for every prime). In particular, 
there exists a finite index subgroup $\Gamma_0\subseteq \Gamma$ which surjects on $\mathbb{Z}/2\times \mathbb{Z}/2$. We let $\Gamma_1\lhd \Gamma_0$ be the kernel of this surjection and consider $M=\Gamma_1\backslash G/K$. Note that $\Gamma_0/\Gamma_1$ acts on $M$. By \cite[Theorem D]{Davis} (due to Davis and Weinberger) we get that $M$ is not a rational homology sphere and we conclude that $b_2(\Gamma_1)\neq 0$.
\end{proof}
\begin{rem}\label{rem: infinite virtual betti}
Since the compact dual of $\SL_3(\bbR)$ is a rational homology sphere, the preceding theorem and a theorem of Venkataramana~\cite{venka} imply that the virtual second Betti number of a lattice is $\SL_3(\bbR)$ is infinite. 
\end{rem}
\begin{cor} \label{cor:SL3}
    Let $\Gamma$ be a lattice in $\SL_3(\mathbb{R})$. 
    Then there exists a finite index subgroup $\Gamma_1 \leq \Gamma$
    such that every finite index subgroup $\Gamma_2 \leq \Gamma_1$ is not operator stable.
\end{cor}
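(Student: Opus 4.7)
The plan is to deduce this as an essentially immediate combination of the preceding Theorem~\ref{thm:SL3} with the operator instability criterion of Dadarlat stated in Theorem~\ref{thm: main old result}(O). The substantive content has already been done in Theorem~\ref{thm:SL3}; the corollary is a formal packaging, so there is no real obstacle to overcome.

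First I would take $\Gamma_1\le \Gamma$ to be the finite index subgroup produced by Theorem~\ref{thm:SL3}. Then, given an arbitrary finite index subgroup $\Gamma_2\le \Gamma_1$, the ``moreover'' clause of that theorem gives $H^2(\Gamma_2,\bbQ)\ne 0$. I would then pass from rational to real coefficients, using that $\bbR$ is flat over $\bbQ$ so that $H^2(\Gamma_2,\bbR)\cong H^2(\Gamma_2,\bbQ)\otimes_\bbQ \bbR\ne 0$ (alternatively via the universal coefficient theorem). In particular, $H^2(\Gamma_2,\bbR)\ne 0$ for the even positive integer $i=2$.

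Second, since $\Gamma_2$ is a lattice in $\SL_3(\bbR)$ it is linear, so the hypotheses of Theorem~\ref{thm: main old result}(O) are satisfied. Applying that criterion with $i=2$ yields that $\Gamma_2$ is not operator stable, which is the desired conclusion.
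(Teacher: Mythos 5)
Your proposal is correct and matches the paper's own proof, which simply cites Theorem~\ref{thm:SL3} together with Theorem~\ref{thm: main old result}(O). The only detail you add is the (routine) passage from $\bbQ$- to $\bbR$-coefficients, which the paper leaves implicit.
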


\begin{proof}
    Immediate by Theorem~\ref{thm:SL3} and Theorem~\ref{thm: main old result}(O).
\end{proof}

The use of \cite[Theorem D]{Davis} in the proof of Theorem~\ref{thm:SL3} used the fact that the dimension of the symmetric space of $\SL_3(\mathbb{R})$ is 5, which is 1 mod 4.
In a similar manner we get the following.

\begin{prop} \label{prop:SOn1}
        Let $\Gamma$ be a cocompact lattice in $\SO(n,1)$ for $n=4k+1$.
    Then there exists an even $i>0$ and a finite index subgroup $\Gamma_1 \leq \Gamma$ satisfying $H^i(\Gamma_1,\mathbb{Q})\neq 0$.
    Moreover, for every finite index $\Gamma_2 \leq \Gamma_1$, $H^i(\Gamma_2,\mathbb{Q})\neq 0$.
\end{prop}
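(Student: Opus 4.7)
The plan is to mirror the proof of Theorem~\ref{thm:SL3} exactly, with the key observation that $n=4k+1\equiv 1\pmod 4$ is precisely the parity condition for the Davis--Weinberger result invoked there. The dimension of the hyperbolic symmetric space $\bbH^n$ is $n$, playing the role that $5=\dim\SL_3(\bbR)/\SO(3)$ played in the higher-rank case.

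The ``moreover'' part follows verbatim from the argument in Theorem~\ref{thm:SL3}: for any finite-index inclusion $\Gamma_2\le \Gamma_1$, Shapiro's lemma gives an injection $H^\ast(\Gamma_1,\bbQ)\hookrightarrow H^\ast(\Gamma_2,\bbQ)$, so non-vanishing in degree $i$ for $\Gamma_1$ is inherited by every further finite-index subgroup.

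For the main assertion, I would first replace $\Gamma$ by a torsion-free finite-index subgroup via Selberg's lemma, and then produce a further finite-index subgroup $\Gamma_0\le \Gamma$ that surjects onto $\bbZ/2\times\bbZ/2$. Exactly as in the $\SL_3(\bbR)$ case, this uses \cite[Theorem~B]{Lub} to obtain finite quotients with arbitrarily large $2$-Sylow subgroups, from which a surjection onto $\bbZ/2\times\bbZ/2$ is extracted. Setting $\Gamma_1\triangleleft\Gamma_0$ to be the kernel, the quotient $M=\Gamma_1\backslash\bbH^n$ is a closed aspherical hyperbolic manifold of dimension $4k+1$, and the induced action of $\Gamma_0/\Gamma_1\cong \bbZ/2\times\bbZ/2$ on $M$ is free because any nontrivial element of the torsion-free group $\Gamma_0$ acts freely on $\bbH^n$. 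Applying \cite[Theorem~D]{Davis}, whose hypothesis is precisely $\dim M\equiv 1\pmod 4$, we conclude that $M$ is not a rational homology sphere. Remark~\ref{rem:homsphere} then produces an even $i>0$ with $H^i(M,\bbQ)\ne 0$, and since $M$ is a $K(\Gamma_1,1)$ this equals $H^i(\Gamma_1,\bbQ)$.

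The only genuinely new point (and thus the main potential obstacle) is the step producing $\Gamma_0$: one must check that Lubotzky's theorem on large $2$-Sylow quotients, used for $\SL_3(\bbZ)$ above, extends to cocompact lattices in $\SO(n,1)$, including the non-arithmetic lattices of Gromov--Piatetski--Shapiro type. This should follow from the linearity and residual finiteness of $\Gamma$ together with strong-approximation-type arguments applied to an appropriate reduction mod $p$ of $\Gamma$. Once the surjection onto $\bbZ/2\times\bbZ/2$ is in hand, the rest of the proof is routine adaptation of the $\SL_3(\bbR)$ argument, and the statement of Corollary~\ref{cor:SOn1} (referenced earlier in the introduction) will then follow by combining Proposition~\ref{prop:SOn1} with Theorem~\ref{thm: main old result}(O) exactly as Corollary~\ref{cor:SL3} follows from Theorem~\ref{thm:SL3}.
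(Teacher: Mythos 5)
Your proposal matches the paper's proof essentially exactly: the paper likewise cites \cite[Theorem~B]{Lub} and \cite[Theorem~D]{Davis} and observes that the dimension of the symmetric space of $\SO(n,1)$ is $1\bmod 4$, then concludes via Remark~\ref{rem:homsphere}. The concern you flagged about extending Lubotzky's theorem to non-arithmetic lattices is a non-issue, since \cite[Theorem~B]{Lub} applies to any finitely generated linear group that is not virtually solvable (which covers every lattice in $\SO(n,1)$, $n\ge 2$, arithmetic or not).
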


\begin{proof}
The dimension of the symmetric space of $\SO(n,1)$ is 1 mod 4.
Applying \cite[Theorem B]{Lub} and \cite[Theorem D]{Davis}, just as in the proof of Theorem~\ref{thm:SL3},
we get that $M=\Gamma_1\backslash G/K$ is not a rational homology sphere, and the proof follows by Remark~\ref{rem:homsphere}.
\end{proof}

The following follows from the celebrated solution by Agol and Wise of the virtual Haken conjecture. 

\begin{thm}[{\cite{Agol}}] \label{thm:SO31}
    Let $\Gamma$ be a cocompact lattice in $\SO(3,1)$.
    Then there exists a finite index subgroup $\Gamma_1 \leq \Gamma$ such that for every $0 \leq i \leq 3$, $H^i(\Gamma_1,\mathbb{Q})\neq 0$.
    Moreover, for every finite index $\Gamma_2 \leq \Gamma_1$, $H^i(\Gamma_2,\mathbb{Q})\neq 0$.
\end{thm}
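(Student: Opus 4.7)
The plan is to reduce the theorem to the virtual positivity of the first Betti number, which is exactly the content of Agol's resolution of the virtual Haken conjecture in the closed hyperbolic $3$-manifold setting.

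First I would invoke Selberg's lemma to pass to a torsion-free finite index subgroup of $\Gamma$; since the statement permits replacing $\Gamma_1$ by any further finite-index subgroup, this preliminary reduction is harmless. Call this subgroup $\Gamma_0$; then the quotient $M_0 = \Gamma_0 \backslash \bbH^3$ is a closed orientable hyperbolic $3$-manifold serving as a model for $B\Gamma_0$, so that $H^*(\Gamma_0, \mathbb{Q}) \cong H^*_s(M_0, \mathbb{Q})$.

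Next I would apply Agol's virtual Haken theorem \cite{Agol} (building on Wise's cubulation program and Kahn--Markovic): $\Gamma_0$ admits a finite index subgroup $\Gamma_1$ with $H^1(\Gamma_1, \mathbb{Q}) \neq 0$. Indeed Agol establishes the stronger virtual fibering conjecture, which in particular produces a finite cover with positive first Betti number. The corresponding cover $M_1 = \Gamma_1 \backslash \bbH^3$ is again a closed orientable aspherical $3$-manifold, and $H^1(\Gamma_1, \mathbb{Q}) \cong H^1_s(M_1, \mathbb{Q}) \neq 0$.

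With the non-vanishing of $H^1$ in hand, Poincar\'e duality on the closed orientable $3$-manifold $M_1$ supplies the remaining degrees: $H^0(\Gamma_1, \mathbb{Q}) \cong H^3(\Gamma_1, \mathbb{Q}) \cong \mathbb{Q}$ and $H^2(\Gamma_1, \mathbb{Q}) \cong H^1(\Gamma_1, \mathbb{Q}) \neq 0$. Hence $H^i(\Gamma_1, \mathbb{Q}) \neq 0$ for every $0 \leq i \leq 3$. For the ``moreover'' clause, given any finite index $\Gamma_2 \leq \Gamma_1$, Shapiro's lemma yields an injection $H^i(\Gamma_1, \mathbb{Q}) \hookrightarrow H^i(\Gamma_2, \mathbb{Q})$ (exactly as used at the start of the proof of Theorem~\ref{thm:SL3}), so the non-vanishing persists in all four degrees.

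The only substantive input is the Agol--Wise theorem; once virtual positivity of the first Betti number is available, Poincar\'e duality and Shapiro's lemma deliver the rest with no further work. I therefore do not anticipate any genuine obstacle beyond citing \cite{Agol} correctly.
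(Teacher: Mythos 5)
Your proposal is correct and is the standard derivation that the paper implicitly intends by citing \cite{Agol}: Agol's virtual Haken/virtual fibering theorem gives a finite cover with $b_1>0$, Poincar\'e duality on the closed orientable aspherical $3$-manifold fills in degrees $0,2,3$, and Shapiro's lemma propagates the non-vanishing to further finite-index subgroups. The only cosmetic point to tidy is orientability: a torsion-free lattice in $\SO(3,1)$ need not preserve orientation, so one should first replace $\Gamma$ by its intersection with $\SO_0(3,1)$ (an index-$\le 2$ subgroup), which is harmless since the theorem is stable under passing to finite-index subgroups.
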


The following is due to Bergeron and Clozel.

\begin{thm}[{\cite[Corollary 1.8]{Ber-Clo13}}] \label{thm:BC}
    Let $\Gamma$ be an arithmetic lattice in $\SO(n,1)$, $n\neq 3$.
    In case $n= 7$, assume $\Gamma$ is not of the form ${}^{6}D_{4}$.
    Then there exists a finite index subgroup $\Gamma_1 \leq \Gamma$ such that for every $0 \leq i \leq n$, $H^i(\Gamma_1,\mathbb{Q})\neq 0$.
    Moreover, for every finite index $\Gamma_2 \leq \Gamma_1$, $H^i(\Gamma_2,\mathbb{Q})\neq 0$.
\end{thm}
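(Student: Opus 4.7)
The plan follows the classical strategy of realizing cohomology of arithmetic lattices through automorphic representations via Matsushima's formula, and it splits cleanly into two parts.

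First, the \emph{moreover} assertion is immediate via Shapiro's lemma and the transfer argument already used in the proof of Theorem~\ref{thm:SL3}: for a finite index inclusion $\Gamma_2 \leq \Gamma_1$, the restriction map $H^i(\Gamma_1,\bbQ) \to H^i(\Gamma_2,\bbQ)$ is split injective (with the transfer as one-sided inverse, since $[\Gamma_1:\Gamma_2]$ is invertible in $\bbQ$), so non-vanishing at $\Gamma_1$ propagates to every further finite-index subgroup. We are therefore reduced to exhibiting some $\Gamma_1$.

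Second, I would pass to a torsion-free congruence subgroup $\Gamma_1 \leq \Gamma$ and apply Matsushima's formula to decompose $H^i(\Gamma_1\backslash\bbH^n,\bbC)$ as a direct sum of $(\mathfrak{g},K)$-cohomology groups of irreducible automorphic representations $\pi$ occurring in $L^2(\Gamma_1\backslash\SO(n,1))$. By the Vogan--Zuckerman classification, the cohomological representations of $\SO(n,1)$ are the $A_{\mathfrak{q}}$-modules attached to $\theta$-stable parabolic subalgebras $\mathfrak{q}\subset\mathfrak{g}$, and each relevant $A_{\mathfrak{q}}$ contributes non-trivially only in a specific degree (for $\SO(n,1)$, one naturally associates an $A_{\mathfrak{q}}$ to each $i$). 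The task becomes: for each $0\le i\le n$, exhibit a congruence subgroup of $\Gamma$ whose discrete spectrum contains the cohomological $A_{\mathfrak{q}}$ contributing to degree $i$.

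To construct such automorphic representations I would use the interplay between Arthur's endoscopic classification for classical groups and the Howe theta correspondence, transferring automorphic forms from smaller orthogonal, unitary or symplectic groups, where cohomological automorphic forms are known to exist in abundance, onto $\SO(n,1)$. The Burger--Sarnak principle would then guarantee these representations appear in the discrete spectrum of some congruence subgroup of $\Gamma$. For arithmetic lattices of simplest type (defined via quadratic forms over totally real number fields) and of quaternionic hermitian type, this program goes through uniformly and should produce enough cohomological representations to cover every degree $0 \le i \le n$.

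The main obstacle is the case analysis across the classification of arithmetic lattices in $\SO(n,1)$. The simplest type and quaternionic hermitian type yield to the theta-lift/endoscopy machinery, but the triality forms ${}^{6}D_4$ arising only in dimension $n=7$ are genuinely exceptional: Arthur's endoscopic classification is formulated for classical groups and does not directly cover these outer forms, and the functorial transfers that would supply cohomological representations in every degree are not available in that case. This is precisely why the ${}^{6}D_4$ lattices are excluded from the statement, and I do not see a way around this obstruction with the presently available automorphic technology.
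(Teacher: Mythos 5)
This statement is not proved in the paper: it is recorded as a cited result, attributed to~\cite[Corollary~1.8]{Ber-Clo13}, and no proof environment follows it in the source. It is then invoked as a black box in the proof of Corollary~\ref{cor:SOn1}. There is therefore no internal argument to compare your sketch against; you have set out to reconstruct the proof of the cited theorem rather than to prove a statement the paper itself argues.

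Taking the sketch on its own terms as an outline of the Bergeron--Clozel argument: the transfer (equivalently, Shapiro) reduction of the ``moreover'' clause is right, and the skeleton --- Matsushima plus Vogan--Zuckerman to reduce the claim to producing cohomological automorphic representations in every degree $0\le i\le n$, then constructing them via Arthur's endoscopic classification --- matches the actual shape of~\cite{Ber-Clo13}. Two points deserve more care. First, arithmetic lattices in $\SO(n,1)$ defined over $\bbQ$ by isotropic quadratic forms are non-cocompact, so Matsushima's cocompact formula does not apply verbatim; one needs the $L^2$- or cuspidal-cohomology variants. Second, the construction step as you phrase it foregrounds the theta correspondence and Burger--Sarnak, but the engine in~\cite{Ber-Clo13} is Arthur's multiplicity formula applied to non-tempered Arthur parameters of the form $\psi=\pi\boxtimes[d]$; theta lifting and Burger--Sarnak enter only at the margins. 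Your diagnosis of why ${}^6D_4$ is excluded is correct. Ultimately, though, every substantive step of the sketch is an appeal to the very body of work that~\cite{Ber-Clo13} condenses, which is precisely why the authors cite it rather than reproduce it.
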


\begin{rem}
 In \cite[Theorem 1.1]{Ber-Clo17} Bergeron and Clozel show that for lattices in $\SO(7,1)$ of the form ${}^{6}D_{4}$, it holds that all congruence subgroups have $b_1=0$. The corresponding questions regarding non-congruence subgroups and higher cohomologies are still open. 
\end{rem}

\begin{thm}[{\cite[Theorem 3.5]{Lub96}}] \label{thm:nonar}
   For $n>3$ and a lattice $\Gamma< G=\SO(n,1)$, if $M=\Gamma\backslash G/K$ admits a codimension 1 totally geodesic submanifold then there exists a finite index subgroup $\Gamma_1 \leq \Gamma$ such that $H^1(\Gamma_1,\mathbb{Q})\neq 0$.
   Moreover, for every finite index $\Gamma_2 \leq \Gamma_1$, $H^1(\Gamma_2,\mathbb{Q})\neq 0$.
\end{thm}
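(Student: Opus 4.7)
The approach, following Millson's classical strategy adapted by Lubotzky, is to produce in some finite cover of $M$ a non-separating two-sided totally geodesic hypersurface; its Poincar\'e dual will then furnish a non-trivial rational class in $H^1$. Geometrically, the codimension-one totally geodesic submanifold $N\subset M$ lifts to a hyperplane $\bbH^{n-1}\subset \bbH^n=G/K$, whose stabilizer in $G$ is a subgroup $H$ isomorphic to $O(n-1,1)$ containing a distinguished reflection $\tau$ across the hyperplane. The lattice $\Gamma_N:=\Gamma\cap H^0$, where $H^0$ is the identity component, uniformizes $N$ (up to finite index), and $\tau$ normalizes $\Gamma_N$.

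The plan then proceeds in two main technical steps. First, one uses subgroup separability of $\Gamma_N$ in $\Gamma$, which is available in the present setting either from the arithmetic structure when $\Gamma$ is arithmetic of simplest type, from the Gromov--Piatetski-Shapiro construction when $\Gamma$ is a hybrid, or more generally from Bergeron--Haglund--Wise-type separability for geometrically finite subgroups of hyperbolic lattices. This allows one to pass to a finite index subgroup $\Gamma_0\leq \Gamma$ such that in $M_0=\Gamma_0\backslash\bbH^n$ the image of $\bbH^{n-1}$ is an embedded closed two-sided hypersurface $N_0$. Second, by further shrinking to a finite index normal subgroup $\Gamma_1\lhd \langle\Gamma_0,\tau\rangle$, one arranges for $\tau$ to descend to an isometric involution $\sigma$ of $M_1=\Gamma_1\backslash\bbH^n$ whose fixed-point set contains a lift $N_1$ of $N_0$. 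With $\sigma$ in place, one extracts the non-triviality of $[N_1]\in H_{n-1}(M_1,\bbQ)$, either directly using that the fixed set of an orientation-preserving isometric involution on a closed orientable aspherical manifold contributes non-trivially to the $(\pm 1)$-eigenspaces of $\sigma^\ast$, or via a Lefschetz/Mayer--Vietoris argument decomposing $M_1$ along $N_1$. Poincar\'e duality then delivers a non-trivial class in $H^1(M_1,\bbQ)=H^1(\Gamma_1,\bbQ)$.

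The hardest step is coordinating the two passages: the finite index subgroup achieving two-sidedness must be compatible with a further refinement on which the reflection $\tau$ acts by deck transformations, and one must still exhibit $[N_1]$ as non-trivial rather than a $\sigma$-image of a boundary. This is the technical heart of Lubotzky's argument and is where both the hypothesis $n>3$ and the codimension-one (as opposed to higher-codimension) totally geodesic hypothesis are used. The ``moreover" clause is then immediate along the lines used in the proof of Theorem~\ref{thm:SL3}: for any finite index $\Gamma_2\leq \Gamma_1$, Shapiro's lemma applied with trivial $\bbQ$-coefficients yields an injection $H^1(\Gamma_1,\bbQ)\hookrightarrow H^1(\Gamma_2,\bbQ)$, so non-vanishing propagates to every finite index subgroup.
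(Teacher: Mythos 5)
The paper does not prove this statement; it is quoted verbatim from Lubotzky's 1996 paper \cite{Lub96}, so the ``paper's own proof'' is the citation. Nevertheless, your proposal reconstructs a Millson-style argument that differs from Lubotzky's actual route, and it contains a genuine gap at the crucial step.

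The gap is the treatment of the case where the embedded totally geodesic hypersurface $N_1 \subset M_1$ \emph{separates} $M_1$. Your argument culminates in ``one extracts the non-triviality of $[N_1]\in H_{n-1}(M_1,\mathbb{Q})$'' followed by Poincar\'e duality; but if $N_1$ separates, then $[N_1]=0$ in $H_{n-1}(M_1;\mathbb{Q})$ identically (it bounds $M_1^+$), and none of the tools you invoke rescue the situation. The Lefschetz fixed-point formula applied to the involution $\sigma$ (which, being induced by a hyperplane reflection, is orientation-\emph{reversing}, not orientation-preserving as you assert) gives $\chi(\mathrm{Fix}\,\sigma) = \sum_i (-1)^i\,\mathrm{tr}(\sigma^*\!\mid H^i(M_1,\mathbb{Q}))$. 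Comparing with the Gauss--Bonnet value of $\chi(N_1)\neq 0$ only shows that \emph{some} $H^i(M_1,\mathbb{Q})$ with $1\le i\le n-1$ is non-zero, not specifically $H^1$; and the Mayer--Vietoris sequence for $M_1 = M_1^+\cup_{N_1} M_1^-$ (with $N_1$ connected and separating) produces no class in $H^1(M_1)$ at all. So the ``technical heart'' you flag is exactly the unfilled hole.

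Lubotzky's actual argument in \cite{Lub96} is different and stronger: after using separability to pass to a finite cover in which the hypersurface subgroup $C=\pi_1(N)$ is embedded, he works with the induced Bass--Serre splitting of $\Gamma_1$ as an amalgam $A*_C B$ or an HNN extension $A*_C$. In the HNN case one surjects onto $\mathbb{Z}$ immediately; in the amalgam case, separability of the edge group $C$ (not the involution $\tau$) is exploited via a theorem on free quotients of amalgams over separable subgroups to show that a further finite-index subgroup surjects onto a non-abelian free group. This yields $b_1>0$, and the ``moreover'' clause follows because any finite-index subgroup of a group surjecting onto a non-abelian free group again surjects onto a non-abelian free group. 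Your closing observation that restriction $H^1(\Gamma_1,\mathbb{Q})\hookrightarrow H^1(\Gamma_2,\mathbb{Q})$ is injective for finite-index $\Gamma_2\le\Gamma_1$ is correct and suffices for the ``moreover'' part, but it does not repair the main argument.
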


We note that for $n>3$, for all \emph{known} non-arithmetic lattices $\Gamma< G=\SO(n,1)$, $M=\Gamma\backslash G/K$ admits a codimension 1 totally geodesic submanifold.

\begin{cor} \label{cor:SOn1}
    Let $\Gamma$ be a cocompact lattice in $G=\SO(n,1)$, $n$ odd.
    Assume either one of the following conditions:
    \begin{itemize}
        \item $n=3$.
        \item $n=4k+1$.
        \item $n=4k+3$ and $\Gamma$ is arithmetic, but not of the form ${}^{6}D_{4}$ in case $n=7$.
        \item $\Gamma\backslash G/K$ has a codimension 1 totally geodesic submanifold. 
    \end{itemize}
    Then there exists a finite index subgroup $\Gamma_1 \leq \Gamma$
    such that every finite index subgroup $\Gamma_2 \leq \Gamma_1$ is not operator stable.
\end{cor}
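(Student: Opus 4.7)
My plan is to verify, for each of the four listed hypotheses, that the criterion of Theorem~\ref{thm: main old result}(O) applies: I need a finite index subgroup $\Gamma_1\leq \Gamma$ such that every finite index $\Gamma_2\leq \Gamma_1$ satisfies $H^i(\Gamma_2,\bbR)\neq 0$ for some even $i>0$. Since each such $\Gamma_2$ is linear and (after replacing $\Gamma_1$ once and for all by a torsion-free finite index subgroup) of type $F$, we have $H^i(\Gamma_2,\bbR)=H^i(\Gamma_2,\bbQ)\otimes_\bbQ\bbR$, so non-vanishing over $\bbQ$ is what I will establish.

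Three of the four cases are immediate from the results collected above. For $n=3$, Theorem~\ref{thm:SO31} yields a $\Gamma_1$ with $H^2(\Gamma_2,\bbQ)\neq 0$ for every finite index $\Gamma_2\leq \Gamma_1$. For $n=4k+1$, Proposition~\ref{prop:SOn1} already provides such a $\Gamma_1$ together with an explicit even degree $i>0$ in which the cohomology does not vanish. For $n=4k+3$ with $\Gamma$ arithmetic (and not of type ${}^{6}D_{4}$ when $n=7$), Theorem~\ref{thm:BC} gives non-vanishing cohomology in every degree $0\leq i\leq n$, from which I simply pick $i=2$.

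The only case that requires a small additional maneuver is the totally geodesic one, since Theorem~\ref{thm:nonar} produces only $H^1(\Gamma_2,\bbQ)\neq 0$ and $1$ is odd. Here I would first shrink $\Gamma_1$ to a finite index subgroup that is torsion-free and contained in the identity component $\SO^+(n,1)$, so that for every finite index $\Gamma_2\leq \Gamma_1$ the quotient $M_2=\Gamma_2\backslash\bbH^n$ is a closed orientable $n$-manifold and a $K(\Gamma_2,1)$. Poincar\'{e} duality together with the universal coefficient isomorphism over the field $\bbQ$ then gives
\[
 H^{n-1}(\Gamma_2,\bbQ)\;\cong\; H_1(\Gamma_2,\bbQ)\;\cong\; H^1(\Gamma_2,\bbQ)^\ast \;\neq\; 0,
\]
and $n-1$ is even since $n$ is odd. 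This odd-to-even upgrade is the only subtlety in the proof; once it is in place, every $\Gamma_2\leq \Gamma_1$ carries non-zero real cohomology in some even positive degree, and Theorem~\ref{thm: main old result}(O) immediately yields that $\Gamma_2$ is not operator stable.
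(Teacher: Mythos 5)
Your proof is correct and takes essentially the same route as the paper's very terse proof (which just points at Theorem~\ref{thm: main old result}(O), Theorems~\ref{thm:SO31}, \ref{thm:BC}, \ref{thm:nonar}, and Remark~\ref{rem:homsphere}); your Poincar\'e duality step for the totally geodesic case is exactly the content of Remark~\ref{rem:homsphere} unpacked. You are in fact slightly more careful than the paper, which neglects to cite Proposition~\ref{prop:SOn1} in its proof even though that proposition is clearly what handles the $n=4k+1$ case for non-arithmetic lattices.
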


\begin{proof}
    Immediate by Theorem~\ref{thm: main old result}(O), given Theorem~\ref{thm:SO31}, Theorem~\ref{thm:BC} and Theorem~\ref{thm:nonar} (taking into account Remark~\ref{rem:homsphere}).
\end{proof}

\section{Remarks and suggestions for future research}

\subsection{Non-uniform lattices}

Theorem~\ref{thm:mainO} gives a pretty good picture of the operator instability of uniform lattices.
The situation with non-uniform lattices is a priori different, as the next proposition applies to all non-uniform lattices in $\SL_2(\mathbb{R})$.

\begin{prop}
    Every virtually free group is operator stable.
\end{prop}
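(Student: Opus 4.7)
My plan is based on two ingredients: free groups are operator stable essentially tautologically, and operator stability is preserved when passing from a finite-index normal subgroup to the overgroup. Every virtually free group $\Gamma$ contains a finite-index free subgroup, and by replacing it with its normal core (still free by Nielsen--Schreier), we may arrange a short exact sequence $1\to F\to \Gamma \to Q\to 1$ with $F$ free and $Q$ finite.

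For the free group $F = \langle S\rangle$: given an asymptotic homomorphism $\{\varphi_n\}$ of $F$ in operator norm, define $\psi_n : F\to U_n$ by $\psi_n(s) = \varphi_n(s)$ for $s\in S$ and extend via the universal property of $F$. For any fixed reduced word $w = s_{i_1}^{\epsilon_1}\cdots s_{i_k}^{\epsilon_k}$, iterated use of the asymptotic property together with bi-invariance of the metric gives $\|\psi_n(w)-\varphi_n(w)\|_{\mathrm{op}}\to 0$; the same argument works for any family of bi-invariant metric groups.

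For the extension step: given an asymptotic homomorphism $\varphi_n :\Gamma\to U_n$, restrict to $F$ and invoke the free case to obtain a genuine homomorphism $\psi_n^F : F\to U_n$ close to $\varphi_n|_F$. Fix coset representatives $\{g_q\}_{q\in Q}$. The asymptotic property of $\varphi_n$ together with the closeness of $\psi_n^F$ to $\varphi_n|_F$ implies that each $\varphi_n(g_q)$ is an approximate intertwiner between $\psi_n^F$ and $\psi_n^F\circ c_{g_q}$ (where $c_g$ is conjugation), and that the family $\{\varphi_n(g_q)\}$ satisfies the cocycle relation $u_p u_q = \psi_n^F(f_{p,q}) u_{pq}$ (with $f_{p,q} = g_p g_q g_{pq}^{-1}\in F$) up to small error. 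The goal is then to find unitaries $u_q\in U_n$ close to $\varphi_n(g_q)$ satisfying both the intertwining and cocycle relations exactly; setting $\psi_n|_F := \psi_n^F$ and $\psi_n(g_q) := u_q$ would then yield the desired true homomorphism $\Gamma\to U_n$ close to $\varphi_n$.

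The main obstacle is constructing the $u_q$ while maintaining operator-norm closeness to $\varphi_n(g_q)$. The subtlety is that $\psi_n^F$ and $\psi_n^F\circ c_{g_q}$ are only approximately conjugate for each fixed $n$, so one cannot directly project $\varphi_n(g_q)$ onto a nonempty space of unitary intertwiners. I would handle this by first perturbing $\psi_n^F$ slightly so that $\psi_n^F$ and $\psi_n^F\circ c_{g_q}$ become exactly conjugate for all $q$ (possible because $\varphi_n(g_q)$ provides an approximate conjugator, so the images of free generators can be adjusted by small unitary perturbations), then projecting $\varphi_n(g_q)$ onto the resulting nonempty spaces of unitary intertwiners to obtain candidates $v_q$, and finally correcting these to enforce the cocycle relations exactly: because $Q$ is finite, an averaging argument over $Q$ resolves the residual $H^2(Q,\,\cdot\,)$ obstruction since $|Q|$ is invertible. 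Compactness of $U_n$ and finiteness of $Q$ together ensure that all adjustments are small in operator norm, completing the proof.
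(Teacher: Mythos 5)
The paper's proof is a one-liner: virtually free groups have rational cohomological dimension~$1$ (Stallings--Swan and Dunwoody), and then Remark~5.2 of~\cite{frob} gives operator stability directly. Your route is completely different --- a hands-on construction via a short exact sequence $1\to F\to\Gamma\to Q\to 1$ --- and the core of it has a real gap. The free-group step is fine (and is the trivial part: a free group is stable with respect to \emph{any} bi-invariant metric family, since there are no relations to check). The problem is the extension step, which is exactly where all the work would have to be, and your sketch does not close it.

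Concretely, the claim that you can ``perturb $\psi_n^F$ slightly so that $\psi_n^F$ and $\psi_n^F\circ c_{g_q}$ become exactly conjugate for all $q$'' is not justified and is delicate: for a single $q$ you could simply replace $\psi_n^F$ by $\varphi_n(g_q)^*\,(\psi_n^F\circ c_{g_q})\,\varphi_n(g_q)$, but you need simultaneous exact conjugacy across all $q\in Q$, and every perturbation of $\psi_n^F$ feeds back into $\psi_n^F\circ c_{g_q}$ itself (the target representation moves as you move the source). This is a genuine self-referential system, and there is no a priori reason a small, $\Gamma$-consistent fixed point exists. Similarly, the ``averaging over $Q$ resolves the residual $H^2(Q,\cdot)$ obstruction'' step is asserting vanishing of an obstruction that lives multiplicatively in $U_n$, not additively in a $\mathbb{Q}[Q]$-module; the classical averaging argument for almost-representations of finite groups (Grove--Karcher--Ruh, Kazhdan) deals with a single group, not with gluing a representation of a finite group onto a prescribed representation of a normal subgroup while respecting both the intertwining and cocycle constraints. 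In fact, whether operator stability passes from a finite-index normal subgroup to the ambient group is not a formal consequence of stability of the two pieces, and your argument does not establish it. The paper avoids all of this by invoking the cohomological dimension criterion, which handles virtually free groups uniformly without any extension/gluing argument.
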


\begin{proof}
    This follows from \cite[Remark 5.2]{frob}, as virtually free groups are of rational cohomological dimension 1.    
\end{proof}

However, we believe that non-uniform lattices in simple Lie groups other then $\SL_2(\mathbb{R})$ are not operator stable.
In fact, we can show it in many cases by some ad hoc arguments, but we lack a unified treatment. Let us illustrate this with an easy example. 

\begin{prop}
    For $n\geq 16$, every lattice in $\SL_n(\mathbb{R})$ is not operator stable.
\end{prop}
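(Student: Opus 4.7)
The strategy is to apply the cohomological criterion of Theorem~\ref{thm: main old result}(O): since every lattice $\Gamma<\SL_n(\mathbb{R})$ is linear, it suffices to exhibit an even integer $i>0$ with $H^i(\Gamma,\mathbb{R})\neq 0$. For cocompact $\Gamma$ this is already delivered by Theorem~\ref{thm:mainO} (as $\SL_n(\mathbb{R})$ is not locally isomorphic to $\SL_3(\mathbb{R})$ or to any $\SO(m,1)$), so the real work is to handle \emph{non-uniform} lattices, where Matsushima's formula is unavailable and must be replaced by Borel's stability theorem.

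First, I reduce to an arithmetic torsion-free setting. Since $\SL_n(\mathbb{R})$ has real rank $n-1\geq 15$, Margulis arithmeticity guarantees that every lattice $\Gamma$ is arithmetic; after passing to a torsion-free finite-index subgroup $\Gamma_0\leq \Gamma$, I may assume $\Gamma_0$ is a standard arithmetic subgroup of $\SL_n$ (of the form $\mathbf{H}(\mathcal{O})$, up to commensurability, for some $\mathbb{Q}$-form $\mathbf{H}$ of $\SL_n$).

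Next, I locate an even-degree class in continuous group cohomology. By the van Est isomorphism, $H^*_c(\SL_n(\mathbb{R}),\mathbb{R})$ coincides with the singular cohomology of the compact dual symmetric space $\SU(n)/\SO(n)$. Cartan's classical computation shows that, in the stable range, this ring is an exterior algebra $\Lambda(x_5,x_9,x_{13},\ldots)$ on odd-degree primitive generators; in particular the cup product $x_5\cdot x_9$ yields a nonzero class of even degree $14$ in $H^{14}_c(\SL_n(\mathbb{R}),\mathbb{R})$.

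Finally, I transfer this class to $\Gamma$ via Borel's stability theorem, which asserts that the restriction map $H^i_c(\SL_n(\mathbb{R}),\mathbb{R})\to H^i(\Gamma_0,\mathbb{R})$ is injective in a range of degrees depending linearly on $n$; the bound $n\geq 16$ is precisely the threshold at which degree $i=14$ enters this injectivity range. Hence $H^{14}(\Gamma_0,\mathbb{R})\neq 0$. Since the restriction factors as $H^{14}_c(G,\mathbb{R})\to H^{14}(\Gamma,\mathbb{R})\to H^{14}(\Gamma_0,\mathbb{R})$, injectivity of the composite forces injectivity of the first arrow, giving $H^{14}(\Gamma,\mathbb{R})\neq 0$, and the claim follows from Theorem~\ref{thm: main old result}(O). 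The main obstacle is pinpointing the precise form of Borel's injectivity range to justify the exact threshold $n=16$ in degree $14$; the other ingredients (the van Est isomorphism, Cartan's cohomology table for $\SU(n)/\SO(n)$, and Margulis arithmeticity) are off-the-shelf.
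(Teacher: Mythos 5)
Your outline has the right skeleton: exhibit a nonzero class in $H^{14}_c(G,\mathbb{R})\cong H^{14}_s(\SU(n)/\SO(n),\mathbb{R})$ coming from the cup product of the degree-$5$ and degree-$9$ exterior generators, transfer it into $H^{14}(\Gamma,\mathbb{R})$, and invoke Theorem~\ref{thm: main old result}(O). That is exactly the structure of the paper's argument, and your note that injectivity of $H^{14}_c(G)\to H^{14}(\Gamma_0)$ forces injectivity of $H^{14}_c(G)\to H^{14}(\Gamma)$ is correct.

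The genuine gap is the transfer step, which you flag yourself as ``the main obstacle.'' You propose to use Borel's stability theorem, but Borel's classical injectivity range for arithmetic subgroups of $\SL_n$ is on the order of $i\lesssim n/4$ (and even the subsequent improvements, such as Li--Sun, do not come close to a linear bound with slope~$1$). To reach degree~$14$ by Borel's theorem you would need $n$ far beyond~$16$, so that route cannot explain the threshold in the statement. What actually drives the bound $n\ge16$ is the theorem of Bader--Sauer~\cite{Bader-Sauer} (cf. Theorem~F there, used repeatedly in this paper, e.g., in the proof of Proposition~\ref{prop:tilde}): for any lattice $\Gamma$ in a higher-rank simple Lie group $G$, the restriction map $H^i_c(G,\mathbb{C})\to H^i(\Gamma,\mathbb{C})$ is an \emph{isomorphism} for all $i<\rank_{\mathbb{R}}(G)$, with no arithmeticity or split reduction needed. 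Since $\rank_\bbR\SL_n(\mathbb{R})=n-1$, degree $14$ falls in this range precisely when $n-1>14$, i.e., $n\ge16$. This also makes your reductions (Margulis arithmeticity, passage to a torsion-free finite-index subgroup, and the separate treatment of the cocompact case via Theorem~\ref{thm:mainO}) unnecessary: the Bader--Sauer restriction isomorphism applies uniformly to every lattice, uniform or not, and the argument closes in one step.
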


\begin{proof}
    Setting $G=\SL_n(\mathbb{R})$ and letting $\Gamma<G$ be a lattice, we have by \cite{Bader-Sauer} that the restriction homomorphism $H^i_c(G,\mathbb{C})\to H^i(\Gamma,\mathbb{C})$ is an isomorphism for $i<\rank(G)=n-1$, and in particular for $i=14$.
    It is well known that $H^i_c(G,\mathbb{C})\cong H^i_s(\SU(n)/\SO(n),\mathbb{C})$ and the cohomology ring of $\SU(n)/\SO(n)$ is an exterior algebra.
    By \cite[Theorem III.6.7 on p. 149]{mimura+toda} it has generators in degrees 5 and 9, hence also an element in degree 14=5+9. Thus $H^{14}(\Gamma,\mathbb{C}) \neq 0$ and we conclude by Theorem~\ref{thm: main old result}(O) that $\Gamma$ is not operator stable.
\end{proof}

A more sophisticated method by Ash-Borel~\cite[Theorem 3.1]{ash+borel} in combination with Theorem~\ref{thm: main old result}(O) yields the following result (and actually much more) by producing even dimensional virtual cohomology classes via modular symbols. 

\begin{prop}
Let $\calO$ be the ring of integers in a number field. Then $SL_n(\calO)$ for $n> 2$ contains a finite index subgroup that is not operator stable. 
\end{prop}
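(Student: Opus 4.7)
The plan is to reduce the claim to Theorem~\ref{thm: main old result}(O). Since $\SL_n(\calO)$ is finitely generated and linear, so is any finite index subgroup $\Lambda$, and the criterion applies: such a $\Lambda$ will be operator unstable as soon as the rational cohomology $H^i(\Lambda,\bbR)$ is non-zero for some positive even~$i$. The whole task is thus to produce, for each $n>2$ and each $\calO$, such a $\Lambda$ and such an~$i$.

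For this we appeal to the method of generalized modular symbols of Ash and Borel~\cite[Theorem~3.1]{ash+borel}. Given a suitable reductive $k$-subgroup $H<\SL_n$, the associated totally geodesic symmetric subspace $X_H$ of $X=\SL_n(k\otimes_\bbQ\bbR)/K_\infty$ descends, at a sufficiently deep congruence subgroup $\Lambda\le \SL_n(\calO)$, to a finite-volume cycle $\Lambda_H\backslash X_H\subset \Lambda\backslash X$. The Ash--Borel modular symbol pairs this cycle with compactly-supported cohomology, and their Theorem~3.1 asserts that for an appropriate choice of $H$ and a deep enough $\Lambda$ the resulting pairing is non-zero; via Poincar\'e--Lefschetz duality this provides a non-vanishing class in $H^{d-\dim X_H}(\Lambda,\bbR)$, where $d=\dim X$.

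It remains to arrange that the degree $d-\dim X_H$ is positive and even, and this is the single non-trivial step of the plan, where the hypothesis $n>2$ is used. The family of available reductive subgroups --- Levi factors $\SL_a\times \SL_b$ of standard maximal parabolics with $a+b=n$, together with restrictions of scalars through subfields of $k$ --- is sufficiently rich that both parities of $d-\dim X_H$ are realized for every $n>2$ and every~$\calO$. Picking $H$ whose modular symbol lands in a positive even degree and a congruence subgroup $\Lambda$ deep enough for Ash--Borel's pairing to be non-zero delivers the required $\Lambda$; Theorem~\ref{thm: main old result}(O) then concludes that $\Lambda$ is not operator stable. This final parity verification is much weaker than the full force of the Ash--Borel theorem, which in fact produces non-trivial classes in many degrees simultaneously (hence the paper's parenthetical ``actually much more''), so while it is the main obstacle in principle, in practice it reduces to a routine dimension count.
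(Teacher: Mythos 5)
Your proposal follows exactly the paper's route: the paper's proof is a one-sentence remark that applies Ash--Borel's modular symbol method (same citation, Theorem 3.1 of \cite{ash+borel}) to produce non-trivial even-degree virtual rational cohomology and then invokes Theorem~\ref{thm: main old result}(O). You merely spell out the mechanism in more detail (geodesic cycles, Poincar\'e--Lefschetz duality, the degree $d-\dim X_H$), and you are honest that the final parity/dimension count is asserted rather than verified --- the paper leaves this at the same level of detail.
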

\subsection{$p$-Schatten norms}

The $p$-Schatten norm of $M_n(\mathbb{C})$ is $\|A\|_p=(\mbox{tr}((A^*A)^p))^{1/p}$.
This generalizes the Frobenius norm, obtained for $p=2$.
We define the metric $d^p_n$ on $U(n)$ by $d^p_n(g,h)=\|g-h\|_p$
and set $\frak{g}_p=(U(n),d^p_n)$.

In \cite{frob} the lattices in $\ell$-adic simple Lie groups of rank at least 3 where shown to be Frobenius stable. In \cite{LO} this result was extended to any $p$-Schatten norm, provided $\ell$ is large enough with respect to $p$.
The starting point of this paper was to extend \cite{frob} from $\ell$-adic Lie groups to real Lie groups by replacing Garland's theory by \cite{Bader-Sauer} (and references therein).
This applies only to unitary representation and we do not know how to extend it to other Banach spaces, i.e, we do not know how to prove the $p$-Schatten stability for any $p\neq 2$ and any higher rank lattice in a real simple Lie group.

\subsection{$S$-arithmetic groups and $l$-adic groups}

A uniform lattice $\Gamma$ in a (virtual) product of locally compact groups $G_1$ and $G_2$ for which $H^{\ast>0}_c(G_i,\bbC)\ne 0$ 
is operator instable: By the K\"unneth formula the continuous cohomology of $G$ is non-zero in an even positive degree. By the Shapiro lemma it injects into the cohomology of $\Gamma$, and so we can apply Theorem~\ref{thm: main old result}(O). 

We apply this observation to the case of $S$-arithmetic groups. 
In the following, let $k$ be a number field, $\mathcal{O}<k$ the ring of integers, $S$ a finite set of primes containing $S_\infty$, the Archimedean primes and let ${\bf G}$ be a $k$-algebraic group. 
We let $\Gamma={\bf G}(\mathcal{O}_S)={\bf G}(k)\cap \GL(\mathcal{O}_S)$ for some embedding $\bf G\hookrightarrow \GL_n$. Note that $\Gamma$ is a lattice in $G:=\prod_{\nu\in S}{\bf G}(k_\nu)$. An immediate consequence of the above observation is:

\begin{prop}
    If ${\bf G}(k_{\nu_i})$ is non-compact for two different place $\nu_1,\nu_2\in S$ and $\Gamma<G$ is uniform, then 
    $\Gamma$ is operator instable.
\end{prop}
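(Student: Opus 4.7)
The plan is to reduce the statement directly to the observation immediately preceding it. I would write $G=G_1\times G_2$ with $G_1={\bf G}(k_{\nu_1})$ and $G_2=\prod_{\nu\in S\setminus\{\nu_1\}}{\bf G}(k_\nu)$, so that $\Gamma$ is a uniform lattice in the product $G_1\times G_2$, and $G_2$ still contains the non-compact factor ${\bf G}(k_{\nu_2})$.

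To apply the observation one must verify $H^{\ast>0}_c(G_i,\bbC)\ne 0$ for $i=1,2$. Each $G_i$ contains the non-compact semisimple local group ${\bf G}(k_{\nu_i})$, for which this non-vanishing is standard: in the archimedean case it follows from van Est's theorem together with the fact that the dual compact symmetric space is a positive-dimensional closed manifold and thus carries a non-trivial top cohomology class. Since continuous cohomology of a direct product splits by the K\"unneth formula, the non-vanishing for ${\bf G}(k_{\nu_i})$ propagates to the whole factor $G_i$.

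With the hypothesis of the observation verified, the conclusion is essentially immediate: by the K\"unneth formula $H^\ast_c(G,\bbC)$ acquires a non-trivial class in some positive even degree (either cup two positive odd-degree classes from the two factors, or pair a positive even-degree class in one $G_i$ with the degree-zero trivial class in the other); Shapiro's lemma for the cocompact inclusion $\Gamma<G$ embeds this class into $H^\ast(\Gamma,\bbC)$ in the same even degree; and Theorem~\ref{thm: main old result}(O) then forces $\Gamma$ to be operator instable. The most delicate point, and essentially the only non-formal step in this plan, is the verification of $H^{\ast>0}_c(G_i,\bbC)\ne 0$ when both ${\bf G}(k_{\nu_i})$ are non-archimedean, where van Est does not apply directly; one would then need to extract a non-trivial positive-degree class from the action on the Bruhat-Tits building, or, given that $S$ contains $S_\infty$, regroup the factorization so that each of $G_1,G_2$ contains an archimedean non-compact factor whenever such a factor is available.
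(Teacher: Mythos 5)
Your proof follows exactly the route the paper intends: isolate the two non-compact factors into a virtual product decomposition $G=G_1\times G_2$, check that each $G_i$ has non-vanishing continuous cohomology in some positive degree, and then invoke the K\"unneth/Shapiro observation stated just before the proposition together with Theorem~\ref{thm: main old result}(O). The paper itself treats the proposition as an ``immediate consequence'' of that observation and gives no further argument, so in spirit your writeup is a correct expansion of the intended proof.

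The one place where your caution is well placed (and where the paper is being somewhat cavalier) is the verification of $H^{\ast>0}_c(G_i,\bbC)\ne 0$ when one or both of the groups $\bfG(k_{\nu_i})$ is non-archimedean. Your archimedean argument (van Est plus non-triviality of the top class of the compact dual) is fine. But your two proposed fallbacks for the non-archimedean case both need more care than you indicate. For a simply connected simple $p$-adic group of rank $r$, the chain complex of the Bruhat--Tits building gives $\Hom_G(C_\ast(X),\bbC)$ isomorphic to the simplicial cochain complex of a single chamber (an $r$-simplex), which is acyclic in positive degrees; so ``extracting a class from the building action'' with \emph{trivial} coefficients does not produce a positive-degree class by the naive argument, and one would have to invoke a finer result on continuous cohomology of $p$-adic reductive groups if such non-vanishing is to be used. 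Your second fallback, regrouping so that each of $G_1,G_2$ contains a non-compact archimedean factor, is simply unavailable when $\bfG$ is $k$-anisotropic with $\bfG(k_\infty)$ compact (e.g.\ the norm-one group of a totally definite quaternion algebra split at two finite places), which is precisely a situation the hypotheses of the proposition allow. So if you want the proposition in full generality, you should either cite the precise Borel--Wallach computation of $H^\ast_c$ of a $p$-adic semisimple group with trivial coefficients and check it is non-zero in a positive degree, or give a direct argument for the purely non-archimedean case (e.g.\ via the Euler characteristic and property $(\tau)$ for cocompact lattices acting on products of buildings). As it stands your proof, like the paper's one-line proof, is complete only when at least one of $\nu_1$, $\nu_2$ can be taken archimedean, or when the needed non-archimedean non-vanishing is imported as a known fact.
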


By a remarkable result of Blasius-Franke-Grunewald~\cite[Theorem 1]{Blasius} (note that the tensor symbol $\otimes$ should be a sum symbol $\oplus$ in \emph{loc.~cit.}) the assumption that $\Gamma$ is uniform could be removed in many cases: 

\begin{prop}
    Asssume that $\bf G$ is simply connected (as a $k$-group). 
    Assume that $\mathbf{G}(k_\infty)$ is not compact and and there exists a non-Archimedean place $\nu\in S$ such that $\bfG(k_\nu)$ is non-compact. 
    Assume further that ${\bf G}(k_\infty)$ is neither locally isomorphic to $\SL_3(\mathbb{R})$ nor to $\SO(n,1)$ for some odd $n$.
    Then $\Gamma$ is operator instable.
\end{prop}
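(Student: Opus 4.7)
The plan is to follow exactly the template used for uniform lattices in the paragraph just before the proposition, but with the Matsushima/Shapiro injection replaced by Blasius--Franke--Grunewald. The target is to produce a non-zero class in $H^i(\Gamma,\mathbb{R})$ for some even $i>0$, from which operator instability follows by Theorem~\ref{thm: main old result}(O), noting that $\Gamma\subset \GL_n(k)$ is linear.

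First I would decompose $G=G_\infty\times G_f$ with $G_\infty=\bfG(k_\infty)$ and $G_f=\prod_{\nu\in S\setminus S_\infty}\bfG(k_\nu)$, and produce continuous cohomology of $G$ in an even positive degree. The hypothesis that $G_\infty$ is not locally isomorphic to $\SL_3(\mathbb{R})$ or $\SO(n,1)$ for odd $n$ is exactly the input of Corollary~\ref{cor:sshs}, yielding some even $i>0$ with $H^i_c(G_\infty,\mathbb{R})\ne 0$. Pulling back along $G\to G_\infty$ (or equivalently via the K\"unneth formula $H^*_c(G,\mathbb{R})\cong H^*_c(G_\infty,\mathbb{R})\otimes H^*_c(G_f,\mathbb{R})$, using that $G_f$ is at least 0-connected in the relevant sense) gives a non-zero even degree class in $H^i_c(G,\mathbb{R})$.

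The crucial second step is to transfer this class to $H^i(\Gamma,\mathbb{R})$. In the cocompact case this is Matsushima's formula together with Shapiro's lemma, but $\Gamma={\bf G}(\mathcal{O}_S)$ need not be cocompact. Here I would invoke the theorem of Blasius--Franke--Grunewald~\cite[Theorem~1]{Blasius} (read with the noted correction $\otimes\leadsto\oplus$), which provides the needed injection of the $(\mathfrak{g},K)$-cohomology of the trivial representation, i.e.\ $H^*_c(G_\infty,\mathbb{R})$, into $H^*(\Gamma,\mathbb{R})$. The assumption that $\bfG$ is simply connected is what makes strong approximation available and puts us in the setting of that theorem, while the non-compactness of some non-Archimedean factor $\bfG(k_\nu)$ is needed so that the relevant piece of the automorphic spectrum contributes and the hypotheses of Blasius--Franke--Grunewald are satisfied (roughly, one needs a genuine $S$-arithmetic situation rather than the purely Archimedean one covered separately).

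Combining the two steps produces a non-zero even-degree rational cohomology class in $H^i(\Gamma,\mathbb{R})$, and Theorem~\ref{thm: main old result}(O) immediately yields operator instability. The main obstacle, and the only non-bookkeeping point, is the second step: one needs to check carefully that Blasius--Franke--Grunewald applies in the stated generality and that the class constructed in the first step lies in the subspace of $H^*(\Gamma,\mathbb{R})$ that their theorem exhibits as a direct summand; everything else is a formal combination of results already in the paper.
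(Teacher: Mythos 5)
Your proposal is correct and follows essentially the same route as the paper: Blasius--Franke--Grunewald gives the injection $H^*_c(\bfG(k_\infty),\mathbb{R})\hookrightarrow H^*(\Gamma,\mathbb{R})$, Corollary~\ref{cor:sshs} produces a nonzero even-degree class in the source, and Theorem~\ref{thm: main old result}(O) concludes. The K\"unneth detour through $H^*_c(G)$ is unnecessary, since \cite{Blasius}*{Theorem~1} already injects the continuous cohomology of the Archimedean factor directly into $H^*(\Gamma,\mathbb{R})$.
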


\begin{proof}
According to~\cite{Blasius}*{Theorem~1} the cohomology of $\mathbf{G}(k_\infty)$ injects into the cohomology of $\Gamma$. 
We conclude by Corollary~\ref{cor:sshs} and Theorem~\ref{thm: main old result}(O). 
\end{proof}

The current paper puts the real case more or less in line with the $\ell$-adic case as far as the Frobenius norm is concerned.
For the operator norm it even puts the real case ahead of the $\ell$-adic case. While our Theorem~\ref{thm:mainO} shows that almost all uniform lattices in real semisimple Lie groups are operator instable, our understanding of the $\ell$-adic case is far from being complete.
If $G$ is a simple $\ell$-adic group of rank $r$ and $\Gamma<G$ a lattice (necessarily cocompact),
then by Garland's theorem, $H^i(\Gamma,\mathbb{C})=0$ for every $0<i<r$, while $H^r(\Gamma,\mathbb{C})\neq 0$.
Thus if $r$ is even, $\Gamma$ is operator instable by Theorem~\ref{thm: main old result}(O).
However, for odd $r$ we know nothing!

\begin{bibdiv}
  \begin{biblist}

    \bib{Agol}{article}{
    AUTHOR = {Agol, Ian},
     TITLE = {The virtual {H}aken conjecture},
      NOTE = {With an appendix by Agol, Daniel Groves, and Jason Manning},
   JOURNAL = {Doc. Math.},
  FJOURNAL = {Documenta Mathematica},
    VOLUME = {18},
      YEAR = {2013},
     PAGES = {1045--1087},
      ISSN = {1431-0635},
   MRCLASS = {20F67 (57Mxx)},
  MRNUMBER = {3104553},
% MRREVIEWER = {Thomas Koberda},
%       DOI = {Incorrect DOI link},
       URL = {https://doi.org/Incorrect DOI link},
}

\bib{Arzhantseva}{article}{
    AUTHOR = {Arzhantseva, Goulnara},
    AUTHOR = {P\u{a}unescu, Liviu},
     TITLE = {Almost commuting permutations are near commuting permutations},
   JOURNAL = {J. Funct. Anal.},
  FJOURNAL = {Journal of Functional Analysis},
    VOLUME = {269},
      YEAR = {2015},
    NUMBER = {3},
     PAGES = {745--757},
      ISSN = {0022-1236,1096-0783},
   MRCLASS = {20F65 (15A27 20E26)},
  MRNUMBER = {3350728},
MRREVIEWER = {Risto\ Atanasov},
       DOI = {10.1016/j.jfa.2015.02.013},
       URL = {https://doi.org/10.1016/j.jfa.2015.02.013},
}

    \bib{Ash}{article}{
    AUTHOR = {Ash, Avner},
     TITLE = {Cohomology of subgroups of finite index of {${\rm SL}(3,Z)$}
              and {${\rm SL}(4,Z)$}},
   JOURNAL = {Bull. Amer. Math. Soc.},
  FJOURNAL = {Bulletin of the American Mathematical Society},
    VOLUME = {83},
      YEAR = {1977},
    NUMBER = {3},
     PAGES = {367--368},
      ISSN = {0002-9904},
   MRCLASS = {22E40 (10E25)},
  MRNUMBER = {430154},
% MRREVIEWER = {Chih-Han Sah},
%       DOI = {10.1090/S0002-9904-1977-14271-7},
       URL = {https://doi.org/10.1090/S0002-9904-1977-14271-7},
}
\bib{ash+borel}{article}{
   author={Ash, Avner},
   author={Borel, Armand},
   title={Generalized modular symbols},
   conference={
      title={Cohomology of arithmetic groups and automorphic forms},
      address={Luminy-Marseille},
      date={1989},
   },
   book={
      series={Lecture Notes in Math.},
      volume={1447},
      publisher={Springer, Berlin},
   },
   date={1990},
   pages={57--75},
   %% review={\MR{1082962}},
   %doi={10.1007/BFb0085726},
}
  \bib{Bader-Sauer}{article}{
  title = {Higher Kazhdan property and unitary cohomology of arithmetic groups},
  author = {Bader, Uri},
  author = {Sauer, Roman},
  year = {2023},
  eprint = {arXiv: 2308.06517},
  url = {https://arxiv.org/abs/2308.06517},
  %note = {arXiv:2308.06517 [math.RT]}
}

    \bib{BL}{article}{
   author={Becker, Oren},
   author={Lubotzky, Alexander},
   title={Group stability and Property (T)},
   journal={J. Funct. Anal.},
   volume={278},
   date={2020},
   number={1},
   pages={108298, 20},
   issn={0022-1236},
   % review={\MR{4027744}},
%   doi={10.1016/j.jfa.2019.108298},
}
		
\bib{Bekka}{book}{
    AUTHOR = {Bekka, Bachir},
    AUTHOR = {de la Harpe, Pierre},
    AUTHOR = {Valette, Alain},
     TITLE = {Kazhdan's property ({T})},
    SERIES = {New Mathematical Monographs},
    VOLUME = {11},
 PUBLISHER = {Cambridge University Press, Cambridge},
      YEAR = {2008},
     PAGES = {xiv+472},
      ISBN = {978-0-521-88720-5},
   MRCLASS = {22-02 (22E40 28D15 37A15 43A07 43A35)},
  MRNUMBER = {2415834},
% MRREVIEWER = {Markus Neuhauser},
 %      DOI = {10.1017/CBO9780511542749},
       URL = {https://doi.org/10.1017/CBO9780511542749},
}

\bib{Ber-Clo13}{article}{
    AUTHOR = {Bergeron, Nicolas}
    AUTHOR = {Clozel, Laurent},
     TITLE = {Quelques cons\'{e}quences des travaux d'{A}rthur pour le spectre
              et la topologie des vari\'{e}t\'{e}s hyperboliques},
   JOURNAL = {Invent. Math.},
  FJOURNAL = {Inventiones Mathematicae},
    VOLUME = {192},
      YEAR = {2013},
    NUMBER = {3},
     PAGES = {505--532},
      ISSN = {0020-9910},
   MRCLASS = {22E50 (11F72 11F75)},
  MRNUMBER = {3049928},
% MRREVIEWER = {Jean Raimbault},
  %     DOI = {10.1007/s00222-012-0415-2},
       URL = {https://doi.org/10.1007/s00222-012-0415-2},
}

\bib{Ber-Clo17}{article}{
    AUTHOR = {Bergeron, Nicolas}
    AUTHOR = {Clozel, Laurent},
     TITLE = {Sur la cohomologie des vari\'{e}t\'{e}s hyperboliques de dimension 7
              trialitaires},
   JOURNAL = {Israel J. Math.},
  FJOURNAL = {Israel Journal of Mathematics},
    VOLUME = {222},
      YEAR = {2017},
    NUMBER = {1},
     PAGES = {333--400},
      ISSN = {0021-2172},
   MRCLASS = {57N65 (11F70 20G10 32Q45)},
  MRNUMBER = {3736510},
% MRREVIEWER = {Thilo Kuessner},
   %    DOI = {10.1007/s11856-017-1593-9},
       URL = {https://doi.org/10.1007/s11856-017-1593-9},
}

\bib{Blasius}{article}{
    AUTHOR = {Blasius, Don},
    AUTHOR = {Franke, Jens},
    AUTHOR = {Grunewald, Fritz},
     TITLE = {Cohomology of {$S$}-arithmetic subgroups in the number field
              case},
   JOURNAL = {Invent. Math.},
  FJOURNAL = {Inventiones Mathematicae},
    VOLUME = {116},
      YEAR = {1994},
    NUMBER = {1-3},
     PAGES = {75--93},
      ISSN = {0020-9910},
   MRCLASS = {11F75 (22E40 22E41)},
  MRNUMBER = {1253189},
% MRREVIEWER = {Glenn Stevens},
    %   DOI = {10.1007/BF01231557},
       URL = {https://doi.org/10.1007/BF01231557},
}

\bib{Borel}{article}{
    AUTHOR = {Borel, Armand},
     TITLE = {Le plan projectif des octaves et les sph\`eres comme espaces
              homog\`enes},
   JOURNAL = {C. R. Acad. Sci. Paris},
  FJOURNAL = {Comptes Rendus Hebdomadaires des S\'{e}ances de l'Acad\'{e}mie des
              Sciences},
    VOLUME = {230},
      YEAR = {1950},
     PAGES = {1378--1380},
      ISSN = {0001-4036},
   MRCLASS = {20.0X},
  MRNUMBER = {34768},
% MRREVIEWER = {Deane Montgomery},
}

\bib{Borel53}{article}{
    AUTHOR = {Borel, Armand},
     TITLE = {Sur la cohomologie des espaces fibr\'{e}s principaux et des
              espaces homog\`enes de groupes de {L}ie compacts},
   JOURNAL = {Ann. of Math. (2)},
  FJOURNAL = {Annals of Mathematics. Second Series},
    VOLUME = {57},
      YEAR = {1953},
     PAGES = {115--207},
      ISSN = {0003-486X},
   MRCLASS = {56.0X},
  MRNUMBER = {51508},
% MRREVIEWER = {S. Chern},
     %  DOI = {10.2307/1969728},
       URL = {https://doi.org/10.2307/1969728},
}
\bib{Borel-Wallach}{book}{
   author={Borel, A.},
   author={Wallach, N.},
   title={Continuous cohomology, discrete subgroups, and representations of
   reductive groups},
   series={Mathematical Surveys and Monographs},
   volume={67},
   edition={2},
   publisher={American Mathematical Society, Providence, RI},
   date={2000},
   pages={xviii+260},
   %isbn={0-8218-0851-6},
   %review={\MR{1721403}},
   %doi={10.1090/surv/067},
}
\bib{Brown}{article}{
    AUTHOR = {Brown, Kenneth S.},
     TITLE = {Cohomology of groups},
    SERIES = {Graduate Texts in Mathematics},
    VOLUME = {87},
 PUBLISHER = {Springer-Verlag, New York-Berlin},
      YEAR = {1982},
     PAGES = {x+306},
      ISBN = {0-387-90688-6},
   MRCLASS = {20-02 (18-01 20F32 20J05 55-01)},
  MRNUMBER = {672956},
% MRREVIEWER = {Ross Staffeldt},
}

\bib{Brown-Loday}{article}{
    AUTHOR = {Brown, Ronald}
    AUTHOR = {Loday, Jean-Louis},
     TITLE = {Van {K}ampen theorems for diagrams of spaces},
      NOTE = {With an appendix by M. Zisman},
   JOURNAL = {Topology},
  FJOURNAL = {Topology. An International Journal of Mathematics},
    VOLUME = {26},
      YEAR = {1987},
    NUMBER = {3},
     PAGES = {311--335},
      ISSN = {0040-9383},
   MRCLASS = {55P15 (18G55 55Q05)},
  MRNUMBER = {899052},
% MRREVIEWER = {Harold Hastings},
      % DOI = {10.1016/0040-9383(87)90004-8},
       URL = {https://doi.org/10.1016/0040-9383(87)90004-8},
}

\bib{Dadarlat}{article}{
    AUTHOR = {Dadarlat, Marius},
     TITLE = {Obstructions to matricial stability of discrete groups and
              almost flat {K}-theory},
   JOURNAL = {Adv. Math.},
  FJOURNAL = {Advances in Mathematics},
    VOLUME = {384},
      YEAR = {2021},
     PAGES = {Paper No. 107722, 29},
      ISSN = {0001-8708},
   MRCLASS = {20F65 (19J05 19K35 20E26 46L80)},
  MRNUMBER = {4238917},
     %  DOI = {10.1016/j.aim.2021.107722},
       URL = {https://doi.org/10.1016/j.aim.2021.107722},
}

\bib{Davis}{article}{
    AUTHOR = {Davis, James F.},
     TITLE = {The surgery semicharacteristic},
   JOURNAL = {Proc. London Math. Soc. (3)},
  FJOURNAL = {Proceedings of the London Mathematical Society. Third Series},
    VOLUME = {47},
      YEAR = {1983},
    NUMBER = {3},
     PAGES = {411--428},
      ISSN = {0024-6115},
   MRCLASS = {57R67 (57P10)},
  MRNUMBER = {716796},
% MRREVIEWER = {C. B. Thomas},
     %  DOI = {10.1112/plms/s3-47.3.411},
       URL = {https://doi.org/10.1112/plms/s3-47.3.411},
}
	
\bib{De}{article}{
   author={Deligne, Pierre},
   title={Extensions centrales non r\'{e}siduellement finies de groupes
   arithm\'{e}tiques},
   language={French, with English summary},
   journal={C. R. Acad. Sci. Paris S\'{e}r. A-B},
   volume={287},
   date={1978},
   number={4},
   pages={A203--A208},
   issn={0151-0509},
   % review={\MR{507760}},
}
		
\bib{frob}{article}{
    AUTHOR = {De Chiffre, Marcus}, 
    AUTHOR = {Glebsky, Lev},
    AUTHOR = {Lubotzky, Alexander},
     AUTHOR = {Thom, Andreas},
     TITLE = {Stability, cohomology vanishing, and nonapproximable groups},
   JOURNAL = {Forum Math. Sigma},
  FJOURNAL = {Forum of Mathematics. Sigma},
    VOLUME = {8},
      YEAR = {2020},
     PAGES = {Paper No. e18, 37},
   MRCLASS = {22F10 (46B08 47B10)},
  MRNUMBER = {4080477},
% MRREVIEWER = {Robert S. Doran},
     %  DOI = {10.1017/fms.2020.5},
       URL = {https://doi.org/10.1017/fms.2020.5},
}

\bib{Garland}{article}{
    AUTHOR = {Garland, Howard},
     TITLE = {{$p$}-adic curvature and the cohomology of discrete subgroups
              of {$p$}-adic groups},
   JOURNAL = {Ann. of Math. (2)},
  FJOURNAL = {Annals of Mathematics. Second Series},
    VOLUME = {97},
      YEAR = {1973},
     PAGES = {375--423},
      ISSN = {0003-486X},
   MRCLASS = {20J05},
  MRNUMBER = {320180},
% MRREVIEWER = {M. S. Raghunathan},
     %  DOI = {10.2307/1970829},
       URL = {https://doi.org/10.2307/1970829},
}

\bib{Kochloukova-Lima}{article}{
    AUTHOR = {Kochloukova, Dessislava H.}
    AUTHOR = {Lima, Francismar Ferreira},
     TITLE = {Homological finiteness properties of fibre products},
   JOURNAL = {Q. J. Math.},
  FJOURNAL = {The Quarterly Journal of Mathematics},
    VOLUME = {69},
      YEAR = {2018},
    NUMBER = {3},
     PAGES = {835--854},
      ISSN = {0033-5606},
   MRCLASS = {20J05 (20E22)},
  MRNUMBER = {3859210},
% MRREVIEWER = {Primo\v{z} Moravec},
     %  DOI = {10.1093/qmath/hax063},
       URL = {https://doi.org/10.1093/qmath/hax063},
}

\bib{Lub}{article}{
    AUTHOR = {Lubotzky, Alexander},
     TITLE = {On finite index subgroups of linear groups},
   JOURNAL = {Bull. London Math. Soc.},
  FJOURNAL = {The Bulletin of the London Mathematical Society},
    VOLUME = {19},
      YEAR = {1987},
    NUMBER = {4},
     PAGES = {325--328},
      ISSN = {0024-6093},
   MRCLASS = {20H20},
  MRNUMBER = {887770},
% MRREVIEWER = {A. E. Zalesski\u{\i}},
     %  DOI = {10.1112/blms/19.4.325},
       URL = {https://doi.org/10.1112/blms/19.4.325},
}

\bib{Lub96}{article}{
    AUTHOR = {Lubotzky, Alexander},
     TITLE = {Free quotients and the first {B}etti number of some hyperbolic
              manifolds},
   JOURNAL = {Transform. Groups},
  FJOURNAL = {Transformation Groups},
    VOLUME = {1},
      YEAR = {1996},
    NUMBER = {1-2},
     PAGES = {71--82},
      ISSN = {1083-4362},
   MRCLASS = {57M60 (20E06 22E40 57M50)},
  MRNUMBER = {1390750},
% MRREVIEWER = {Igor Rivin},
     %  DOI = {10.1007/BF02587736},
       URL = {https://doi.org/10.1007/BF02587736},
}

\bib{LO}{article}{
    AUTHOR = {Lubotzky, Alexander} 
    AUTHOR = {Oppenheim, Izhar},
     TITLE = {Non {$p$}-norm approximated groups},
   JOURNAL = {J. Anal. Math.},
  FJOURNAL = {Journal d'Analyse Math\'{e}matique},
    VOLUME = {141},
      YEAR = {2020},
    NUMBER = {1},
     PAGES = {305--321},
      ISSN = {0021-7670},
   MRCLASS = {22F10 (15A60 20E99 46B08)},
  MRNUMBER = {4174045},
% MRREVIEWER = {Robert S. Doran},
     %  DOI = {10.1007/s11854-020-0119-2},
       URL = {https://doi.org/10.1007/s11854-020-0119-2},
}

\bib{Miller}{article}{
    AUTHOR = {Miller, Clair},
     TITLE = {The second homology group of a group; relations among
              commutators},
   JOURNAL = {Proc. Amer. Math. Soc.},
  FJOURNAL = {Proceedings of the American Mathematical Society},
    VOLUME = {3},
      YEAR = {1952},
     PAGES = {588--595},
      ISSN = {0002-9939},
   MRCLASS = {20.0X},
  MRNUMBER = {49191},
% MRREVIEWER = {R. C. Lyndon},
     %  DOI = {10.2307/2032593},
       URL = {https://doi.org/10.2307/2032593},
}

\bib{Matsushima}{article}{
    AUTHOR = {Matsushima, Yoz\^{o}},
     TITLE = {On {B}etti numbers of compact, locally sysmmetric {R}iemannian
              manifolds},
   JOURNAL = {Osaka Math. J.},
  FJOURNAL = {Osaka Mathematical Journal},
    VOLUME = {14},
      YEAR = {1962},
     PAGES = {1--20},
      ISSN = {0388-0699},
   MRCLASS = {53.73 (57.00)},
  MRNUMBER = {141138},
% MRREVIEWER = {A. Borel},
}
\bib{Mimura}{article}{
    AUTHOR = {Mimura, Mamoru},
     TITLE = {Homotopy theory of {L}ie groups},
 BOOKTITLE = {Handbook of algebraic topology},
     PAGES = {951--991},
 PUBLISHER = {North-Holland, Amsterdam},
      YEAR = {1995},
   MRCLASS = {57T10 (55M30 55P45 55Q52 55T20)},
  MRNUMBER = {1361904},
% MRREVIEWER = {Peter J. Eccles},
     %  DOI = {10.1016/B978-044481779-2/50020-1},
       URL = {https://doi.org/10.1016/B978-044481779-2/50020-1},
}

\bib{mimura+toda}{book}{
   author={Mimura, Mamoru},
   author={Toda, Hirosi},
   title={Topology of Lie groups. I, II},
   series={Translations of Mathematical Monographs},
   volume={91},
   %edition={Japanese edition},
   publisher={American Mathematical Society, Providence, RI},
   date={1991},
   pages={iv+451},
   %isbn={0-8218-4541-1},
   %review={\MR{1122592}},
   %doi={10.1090/mmono/091},
}

\bib{Naik}{thesis}{
    title    = {Lazard correspondence up to isoclinism},
    school   = {The University of Chicago},
    author   = {Vipul Naik},
    type    = {PhD Thesis},
    year     = {2013}, %other attributes omitted
}

    \bib{neukirch}{book}{
    author={Neukirch, J\"{u}rgen},
    author={Schmidt, Alexander},
    author={Wingberg, Kay},
    title={Cohomology of number fields},
    series={Grundlehren der mathematischen Wissenschaften [Fundamental
            Principles of Mathematical Sciences]},
    volume={323},
    edition={2},
    publisher={Springer-Verlag, Berlin},
    date={2008},
    pages={xvi+825},
    %isbn={978-3-540-37888-4},
    %% review={\MR{2392026}},
    %doi={10.1007/978-3-540-37889-1},
    }

\bib{Rapinchuk}{incollection}{
    AUTHOR = {Rapinchuk, A. S.},
     TITLE = {The congruence subgroup problem},
 BOOKTITLE = {Algebra, {$K$}-theory, groups, and education ({N}ew {Y}ork,
              1997)},
    SERIES = {Contemp. Math.},
    VOLUME = {243},
     PAGES = {175--188},
 PUBLISHER = {Amer. Math. Soc., Providence, RI},
      YEAR = {1999},
      ISBN = {0-8218-1087-1},
   MRCLASS = {20H05 (11E57 20G30)},
  MRNUMBER = {1732047},
MRREVIEWER = {B.\ Sury},
       DOI = {10.1090/conm/243/03693},
       URL = {https://doi.org/10.1090/conm/243/03693},
}
    
\bib{ser72}{article}{
   author={Serre, Jean-Pierre},
   title={Le probl\`eme des groupes de congruence pour SL2},
   language={French},
   journal={Ann. of Math. (2)},
   volume={92},
   date={1970},
   pages={489--527},
   issn={0003-486X},
   % review={\MR{272790}},
  % doi={10.2307/1970630},
}
		
 \bib{Soule}{article}{
    AUTHOR = {Soul\'{e}, Christophe},
     TITLE = {The cohomology of {${\rm SL}_{3}({\bf Z})$}},
   JOURNAL = {Topology},
  FJOURNAL = {Topology. An International Journal of Mathematics},
    VOLUME = {17},
      YEAR = {1978},
    NUMBER = {1},
     PAGES = {1--22},
      ISSN = {0040-9383},
   MRCLASS = {22E40 (18H10 20H05)},
  MRNUMBER = {470141},
% MRREVIEWER = {Ken-Ichi Tahara},
   %    DOI = {10.1016/0040-9383(78)90009-5},
       URL = {https://doi.org/10.1016/0040-9383(78)90009-5},
}
\bib{venka}{article}{
   author={Venkataramana, T. N.},
   title={Virtual Betti numbers of compact locally symmetric spaces},
   journal={Israel J. Math.},
   volume={166},
   date={2008},
   pages={235--238},
   %issn={0021-2172},
   %% review={\MR{2430434}},
   %doi={10.1007/s11856-008-1029-7},
}
		
 \bib{Witte}{thesis}{ 
     title    = {A lattice with no torsion free subgroup of finite index (after
P. Deligne)},
%    school   = {The University of Chicago},
    author   = {Dave Witte Morris},
    type    = {informal discussion at the University of Chicago},
    year     = {2009}, %other attributes omitted
}

\bib{Wolf}{article}{
    AUTHOR = {Wolf, Joseph},
     TITLE = {Symmetric spaces which are real cohomology spheres},
   JOURNAL = {J. Differential Geometry},
  FJOURNAL = {Journal of Differential Geometry},
    VOLUME = {3},
      YEAR = {1969},
     PAGES = {59--68},
      ISSN = {0022-040X},
   MRCLASS = {53.73},
  MRNUMBER = {247592},
% MRREVIEWER = {T. Ohkubo},
       URL = {http://projecteuclid.org/euclid.jdg/1214428818},
}
  \end{biblist}
\end{bibdiv}
\end{document}